\newcommand{\Var}{\mathop \mathrm{Var}}
\newcommand{\llim}{\mathop \mathrm{l.i.m.}}
\newcommand{\supp}{\mathop \mathrm{supp}}
\newcommand{\prt}{\partial}
\newtheorem{Proposition}{Proposition}
\theoremstyle{definition}
\newtheorem{Example}{Example}
\newtheorem{Definition}{Definition}
\theoremstyle{plain}
\newtheorem{Lemma}{Lemma}
\theoremstyle{plain}
\newtheorem{Theorem}{Theorem}
\theoremstyle{condition}
\theoremstyle{plain}
\theoremstyle{remark}
\newtheorem{Remark}{Remark}
\newcommand{\vp}{\varphi}
\newcommand{\vf}{\varphi}
\renewcommand{\r}{\rho}
\newcommand{\R}{\mathds{R}^d}
\renewcommand{\P}{\mathds{P}}
\newcommand{\E}{\mathds{E}}
\begin{document}

\title[]{A representation for the derivative with respect to the initial data of the solution of an SDE with a non-regular drift and a Gaussian noise}

\author{Olga V. Aryasova}
\address{Institute of Geophysics, National Academy of Sciences of Ukraine,
Palladin pr. 32, 03680, Kiev-142, Ukraine}
\email{oaryasova@gmail.com}
\author{Andrey Yu. Pilipenko}
\address{Institute of Mathematics,  National Academy of Sciences of
Ukraine, Tereshchenkivska str. 3, 01601, Kiev, Ukraine; National Technical University of Ukraine "KPI", Kiev, Ukraine}
\email{pilipenko.ay@yandex.ua}

\subjclass[2000]{60J65, 60H10}
 \dedicatory{}

\keywords{Stochastic flow; Continuous additive functional; Differentiability with
respect to initial data}

\begin{abstract}
We consider a multidimensional SDE with a Gaussian noise and a drift vector being  a vector function of bounded variation. We prove the existence of generalized derivative of the solution with respect to the initial conditions and represent the derivative as a solution of a linear SDE with coefficients depending on the initial process. The  representation obtained is a natural generalization of the expression for the derivative in the smooth case. The theory of continuous additive functionals is used.
\end{abstract}

\maketitle \thispagestyle{empty}
\section*{Introduction}
Consider a $d$-dimensional nonhomogeneous stochastic differential equation (SDE)
\begin{equation}\label{eq_main}
\left\{
\begin{aligned}
d\vp_t(x)&=a(t,\vp_t(x))dt+\sum_{k=1}^{m}\sigma_{k}(t,\vp_t(x))dw_{k}(t),\\
\vp_0(x)&=x,\\
\end{aligned}\right.
\end{equation}
where $x\in\mathbb{R}^d,  d\geq1, m\geq1$,  $(w(t))_{t\geq0}=(w_{1}(t),\dots,w_{m}(t))_{t\geq0}$ is a standard  $m$-dimensional Wiener process, the drift coefficient $a:[0,\infty)\times\R\to\R$ is Borel measurable and bounded, and the diffusion coefficient $\sigma:[0,\infty)\times\R\to\R\times\mathds{R}^{m}$ is bounded and continuous.

In what follows we suppose that $\sigma$ satisfies the following conditions:
\begin{enumerate}[(C1)]
\item $\sigma\in W_{2d+2,loc}^{0,1}([0,\infty)\times\R)$.
\item {\it Uniform ellipticity}: For each $T>0$, there exists an ellipticity constant $B>0$ such that for all $t\in[0,T]$, $x\in\R$, $\theta\in\R$,
$$
\theta^\ast\sigma(t,x)\sigma^\ast(t,x)\theta\geq B|\theta|^2,
$$
where $|\cdot|$ is a norm in $\R$.
\end{enumerate}

Under these assumptions on the coefficients there exists a unique strong solution to equation (\ref{eq_main}) (see \cite{Veretennikov81}).

It is well known (cf. \cite{Kunita90}) that if the coefficients of
(\ref{eq_main}) are continuously differentiable and the derivatives are bounded and H{\"{o}}lder
continuous uniformly in $t$, then there exists a flow of diffeomorphisms for equation
(\ref{eq_main}). The derivative $\nabla\varphi_t(x)=:Y_t(x)$  is a solution of
the equation
\begin{equation}\label{eq_deriv_new}
dY_t(x)=\nabla a(t,\vp_t(x))Y_t(x)dt+\sum_{k=1}^m\nabla\sigma_k(t,\vp_t(x))Y_t(x) dw_k(t),
\end{equation}
where for a function $f:\R\to\R$, we set $\nabla f=\left(\frac{\partial f^i}{\partial x_j}\right)_{1\leq i,j\leq d}$.

Flandoli et al. \cite{Flandoli+10} showed that  the conditions on the coefficients can be essentially weakened and a flow of diffeomorphisms exists in the case of a
smooth, bounded, uniformly non-degenerate noise and a bounded, uniformly in time H\"{o}lder
continuous drift term.

The case of discontinuous drift was studied in \cite{Fedrizzi+13b, Fedrizzi+13a, MeyerBrandis+13, Mohammed+12} and the weak differentiability of the solution to (\ref{eq_main}) was proved  under rather weak assumptions on the drift. Fedrizzi et al. \cite{Fedrizzi+13b} considered equation \eqref{eq_main} with an identity diffusion matrix and a drift vector belonging to $L_q(0,T; L_p(\mathds{R}^d))$ for some $p,q $ such that
$$
p\geq2, \ q>2, \ \frac dp+ \frac 2q<1.
$$
Using a Zvonkin-type transformation they established the existence of the G\^{a}teaux derivative with respect to the initial data in $L_2(\Omega\times[0,T];\R)$.
The authors of \cite{Mohammed+12} based on the Malliavin calculus proved that the solution of equation \eqref{eq_main} with a bounded measurable drift vector $a$ and an identity diffusion matrix belongs to the space $L^2(\Omega; W^{1,p}(U))$ for each $t\in\R, p>1,$ and any open and bounded $U\in\R$. The Malliavin calculus is used also in \cite{MeyerBrandis+13}. Unfortunately, in these works no representations for the derivatives are given.

The one-dimensional case was considered in \cite{Aryasova+12, Attanasio10}  and explicit expressions for the Sobolev derivative were obtained.
The formulas involve the local time of the initial process.
There are no direct generalizations of these formulas to the multidimensional case because the local time at a point does not exist  in the multidimensional situation.

The aim of the present paper is to get a natural representation for the derivative $\nabla_x\vp_t(x)$ of the solution to equation \eqref{eq_main}.
 We assume that $\sigma$
satisfies (C1),(C2), the H\"older condition, and for some $\rho>0$ and all $1\leq k\leq m$, $1\leq i,j\leq d$, the function $\left|\frac{\partial \sigma_k^i}{\partial y_j}(s,y)\right|^{2+\r}$ belongs to the Kato-type class $\mathcal{K}$, i. e.,
\begin{multline*}
\lim_{t\downarrow 0}\sup_{t_0\in[0,\infty), \ x_0\in\R}\int_{t_0}^{t_0+t}ds\int_{\R}\frac{1}{(2\pi(s-t_0))^{d/2}}\exp\left\{-\frac{|y-x_0|^2}{2(s-t_0)}\right\}\times \\
\left|\frac{\partial \sigma_k^i}{\partial y_j}(s,y)\right|^{2+\r}dy=0.
\end{multline*}
We show that the derivative $Y_t(x)=\nabla_x\varphi_t(x)$ is a
solution to the SDE
\begin{equation}\label{eq_derivative_1}
Y_t(x)=E+\int_0^t dA_s(\varphi(x))Y_s(x)+\sum_{k=1}^{m}\int_0^t\nabla\sigma_{k}(s,\varphi_s(x))Y_s(x)dw_{k}(s),
\end{equation}
where $E$ is the $d$-dimensional identity matrix, $A_s(\varphi(x))$ is a continuous additive functional of the process $(t,\vp_t(x))_{t\geq0}$, which is equal to $\int_0^t \nabla a(s,\vp_s(x))ds$ if $a$ is differentiable. This representation is a natural generalization of the expressions for the smooth case.

We prove the main result for such $a$ that for each $t\geq 0$ and all $1\leq i \leq d$,  $a^i(t,\cdot)$ is a function of bounded
variation on $\R$, i.e., for each $1\leq j\leq d, $ the generalized
derivative $\mu^{ij}(t,dy)=\frac{\partial a^i}{\partial x_j}(t,dy)$ is a
signed measure on $\R$. Besides, we suppose that for all
$1\leq i,j \leq d$, $\mu^{ij}(t,dy)dt$ is of the class $\mathcal{K}$, i.e.,
$$
\lim_{t\downarrow 0}\sup_{t_0\in[0,\infty), \ x_0\in\R}\int_{t_0}^{t_0+t}ds\int_{\R}\frac{1}{(2\pi(s-t_0))^{d/2}}\exp\left\{-\frac{|y-x_0|^2}{2(s-t_0)}\right\}|\mu|^{ij}(s,dy)=0,
$$
where $|\mu|^{ij}=\mu^{ij,+}+\mu^{ij,-}$ is the variation of $\mu^{ij}$; $\mu^{ij,+},\mu^{ij,-}$ are  measures from the Hahn-Jordan
decomposition $\mu^{ij}=\mu^{ij,+}-\mu^{ij,-}$.

The similar results for a homogeneous SDE with an identity diffusion matrix and a drift being a vector function of bounded variation were obtained in \cite{Aryasova+14a}. In this case there is no martingale member in the right-hand side of \eqref{eq_derivative_1}. This essentially simplifies the proof. The argument is based on the theory of additive functionals of homogeneous Markov processes developed by Dynkin \cite{Dynkin63}. In  \cite{Bogachev+15} the same method was applied to a homogeneous SDE with L\'evi noise and a drift being a vector function of bounded variation. The existence of a strong solution and the differentiability of the solution with respect to the initial data were proved. Unfortunately, the theory by Dynkin can not be directly applied to our problem because $(\varphi_t(x))_{t\geq0}$ is not homogeneous.

 The paper is organized as follows. In Section \ref{Section_Preliminaries} we collect some facts from the theory of additive functionals of homogeneous Markov processes  by Dynkin \cite{Dynkin63}.   We intend  to consider a homogeneous process $(t,\varphi_t)_{t\geq0}$  and adapt Dynkin's theory to the functionals of this process. The main result is formulated in Section \ref{Section_main_result} and proved in Section \ref{Proof of Theorem_main}. The idea of the proof is to approximate the solution of equation \eqref{eq_main} by solutions of equations with smooth coefficients. The key point is the convergence of continuous homogeneous additive functionals of the approximating processes  to a functional of the process being the solution to  \eqref{eq_main} (Lemma \ref{Lemma_converg_W_functionals}). The proof of the corresponding statement uses essentially the result on the convergence of the transition probability densities of the approximating processes, which is obtained in Section \ref{Section_Conv_Densities}.

The method proposed can be considered as a generalization of the local time approach used in the one dimensional case.

\section{Preliminaries: continuous additive functionals}\label{Section_Preliminaries}
Let $(\xi_t, \mathcal{F}_t, P_z)$ be a c\'adl\'ag homogeneous Markov process  with a phase space $(E,\mathcal{B})$, where $\sigma$-algebra $\mathcal{B}$ contains all one-point sets  (see notations in \cite{Dynkin63}). Assume that $(\xi_t)_{t\geq0}$ has the infinite life-time. Denote $\mathcal N_t=\sigma\left\{\xi_s:0\leq s\leq t\right\}$
\begin{Definition}\label{def_W_func}
A  random function $A_t, t\geq 0,$ adapted to the filtration $\{\mathcal{N}_t\}$   is called a non-negative continuous additive functional of the process $(\xi_t)_{t\geq0}$ if it is
\begin{itemize}
\item non-negative;
\item continuous in $t$;
\item homogeneous additive, i.e., for all $t\geq 0, \ s>0,$ $z\in E,$
\begin{equation}\label{eq_additive}
A_{t+s}=A_s+\theta_sA_t \ \ P_z-\mbox{almost surely},
\end{equation}
where $\theta$ is the shift operator.
\end{itemize}
If additionally for each $t\geq 0,$
$$
\sup_{z\in E}\mathds{E}_zA_t<\infty,
$$
then $A_t, t\geq 0,$ is called  a W-functional.
\end{Definition}

\begin{Remark}
It follows from Definition \ref{def_W_func} that a W-functional is non-decreasing in $t$, and for all $z\in E$
$$
P_z\{A_0=0\}=1.
$$
\end{Remark}
\begin{Definition}
The function
$$
f_t(z)=\mathds{E}_z A_t
$$
is called the characteristic of a $W$-functional $A_t.$
\end{Definition}
\begin{Remark}\label{Remark_triangle_ineq}(See \cite{Dynkin63}, Properties 6.15). For all $s\geq0$, $t\geq0$,
$$
\|f_{t+s}\|_E\leq \|f_{t}\|_E+\|f_{s}\|_E,
$$
where $\|f_t\|_E=\sup_{z\in E}|f_{t}(z)|$.
\end{Remark}
\begin{Proposition}[See \cite{Dynkin63}, Theorem 6.3]\label{Proposition_uniquely_defined}
A W-functional is defined by its characteristic uniquely up to equivalence.
\end{Proposition}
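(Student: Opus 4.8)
The plan is to reduce the statement to a second-moment computation. Suppose $A_t$ and $B_t$ are two $W$-functionals of $(\xi_t)_{t\ge0}$ sharing the same characteristic, $\E_z A_t=\E_z B_t=f_t(z)$ for all $t\ge0$, $z\in E$, and put $C_t:=A_t-B_t$. Then $C$ is a continuous additive functional whose paths are continuous and of finite variation, and whose characteristic vanishes, $\E_z C_t\equiv0$. The aim is to show $\E_z C_t^2=0$ for every $t$ and $z$; then $A_t=B_t$ $P_z$-a.s.\ for each $t$, and continuity of the paths upgrades this to $P_z\{A_t=B_t\ \forall t\}=1$, i.e.\ equivalence.

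The key preliminary estimate I would establish is
$$
\E_z A_t^2\le 2\|f_t\|_E\, f_t(z)
$$
for an arbitrary $W$-functional. Using continuity and monotonicity of $A$ together with Stieltjes integration by parts, $A_t^2=2\int_0^t (A_t-A_s)\,dA_s$; additivity rewrites the integrand as $A_t-A_s=\theta_s A_{t-s}$. Conditioning on $\mathcal N_s$ and invoking the Markov property replaces the future increment by its mean, $\E_z[\theta_s A_{t-s}\mid\mathcal N_s]=f_{t-s}(\xi_s)$, whence $\E_z A_t^2=2\E_z\int_0^t f_{t-s}(\xi_s)\,dA_s$. Since $A$ is non-decreasing, $f$ is non-decreasing in $t$, so $f_{t-s}(\xi_s)\le\|f_t\|_E$, and integrating $dA_s$ back to $A_t$ gives the bound.

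For the uniqueness itself I would fix $t$ and a partition $0=t_0<\dots<t_n=t$ with increments $\Delta_k=t_{k+1}-t_k$, and set $\eta_k:=C_{t_{k+1}}-C_{t_k}=\theta_{t_k}C_{\Delta_k}$, so that $C_t^2=\sum_k\eta_k^2+2\sum_{j<k}\eta_j\eta_k$. For $j<k$ the factor $\eta_j$ is $\mathcal N_{t_k}$-measurable while $\E_z[\eta_k\mid\mathcal N_{t_k}]=\E_{\xi_{t_k}}C_{\Delta_k}=0$ by the Markov property and the vanishing of the characteristic of $C$; hence all cross terms disappear and $\E_z C_t^2=\sum_k\E_z\eta_k^2$. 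Writing $\eta_k=\alpha_k-\beta_k$ with $\alpha_k,\beta_k\ge0$ the increments of $A$ and $B$, one has $\eta_k^2\le\alpha_k^2+\beta_k^2$, and the estimate above applied after conditioning at $t_k$ yields $\E_z\alpha_k^2\le 2\|f_{\Delta_k}\|_E\,\E_z\alpha_k$, and likewise for $\beta_k$. Summing and using $\sum_k\E_z\alpha_k=\E_z A_t=f_t(z)$,
$$
\E_z C_t^2\le 4\max_k\|f_{\Delta_k}\|_E\, f_t(z).
$$
The left-hand side is independent of the partition, while the right-hand side tends to $0$ as the mesh shrinks, because $\|f_\delta\|_E\to0$ as $\delta\downarrow0$ (a standard property of $W$-functionals, see \cite{Dynkin63}); therefore $\E_z C_t^2=0$.

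The main obstacle is not the algebra but the rigor of the second-moment step: justifying Stieltjes integration by parts for additive functionals and, especially, the exchange of the conditional expectation with the integral $\int_0^t(\cdot)\,dA_s$. I would carry this out through Riemann-sum approximations, in which the conditioning reduces to the elementary $\mathcal N_{t_k}$-measurable manipulation already used for the cross terms, passing to the limit with the uniform integrability supplied by $\sup_z\E_z A_t^2<\infty$. The auxiliary fact $\|f_\delta\|_E\to0$ should be recorded separately; it follows from the continuity of $A$, the identity $A_0=0$, and the defining bound $\sup_z\E_z A_t<\infty$.
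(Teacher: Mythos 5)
Your reduction to a second-moment computation is the right idea---it is in essence the mechanism behind Dynkin's Theorem 6.3, which the paper cites rather than proves---and both the orthogonality-of-increments step ($\E_z C_t^2=\sum_k\E_z\eta_k^2$ via the Markov property and the vanishing characteristic of $C$) and the estimate $\E_z A_t^2\le 2\|f_t\|_E\,f_t(z)$ are correct. The genuine gap is your final limit: you assert that $\|f_\delta\|_E\to0$ as $\delta\downarrow0$ for an \emph{arbitrary} W-functional, claiming it follows from continuity of $A$, $A_0=0$, and $\sup_z\E_z A_t<\infty$. Dominated convergence gives only the pointwise statement $f_\delta(z)\to0$ for each fixed $z$; uniformity in $z$ fails in general. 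Counterexample: on $E=[0,\infty)$ take the deterministic process $\xi_t=z+t$ and $A_t=V(\xi_t)-V(\xi_0)$, where $V$ is continuous, non-decreasing, and rises by $1$ on each interval $[n,n+1/n]$, $n\ge1$. Then $A$ is a continuous non-negative additive functional with $\sup_z\E_z A_t\le t+2<\infty$, so it is a W-functional, yet $\|f_\delta\|_E=\sup_z\bigl(V(z+\delta)-V(z)\bigr)=1$ for every $\delta>0$. This is precisely why the uniform smallness $\lim_{t\downarrow0}\sup_z f_t(z)=0$ appears as an \emph{extra hypothesis} in Theorem \ref{Theorem_sufficient_condition} and as assumption 3) of Lemma \ref{Lemma_converg_functionals}---it is not a consequence of the definition, and the subadditivity of Remark \ref{Remark_triangle_ineq} does not yield it either. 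Consequently your bound $\E_z C_t^2\le4\max_k\|f_{\Delta_k}\|_E\,f_t(z)$ need not tend to zero as the mesh shrinks.

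The gap is local and fixable without changing your skeleton. Keep the partition identity $\E_z C_t^2=\sum_k\E_z\eta_k^2$ but estimate pathwise instead of through characteristics: $\sum_k\eta_k^2\le\max_k|\eta_k|\cdot\sum_k(\alpha_k+\beta_k)=\max_k|\eta_k|\,(A_t+B_t)$; as the mesh tends to $0$, $\max_k|\eta_k|\to0$ almost surely by uniform continuity of the paths on $[0,t]$, and the expression is dominated by $(A_t+B_t)^2$, which is $P_z$-integrable by your own second-moment lemma, so dominated convergence gives $\E_z C_t^2=0$. Alternatively, Dynkin's argument avoids any limiting procedure: the bilinear version of your lemma, $\E_z A_tB_t=\E_z\int_0^t g_{t-s}(\xi_s)\,dA_s+\E_z\int_0^t f_{t-s}(\xi_s)\,dB_s$ (with $g$ the characteristic of $B$), yields with $f=g$ the exact identity $\E_z(A_t-B_t)^2=0$, with no smallness of $\|f_\delta\|_E$ required. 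A third route: your cross-term computation shows $C$ is a continuous $P_z$-martingale with paths of bounded variation, hence constant, i.e.\ $C_t\equiv C_0=0$.
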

The following theorem states the relation between the convergence of
W-functionals and the convergence of their characteristics.
\begin{Theorem}[See \cite{Dynkin63}, Theorem 6.4]\label{Theorem_Convergence_characteristics} Let $A_{n,t}, \
n\geq 1,$ be W-functionals of the process $(\xi_t)_{t\geq0}$  and $f_{n,t}(z)=\mathds{E}_z A_{n,t}$ be
their characteristics. Suppose that for each $t>0$, a function
$f_t(z)$ satisfies the condition
\begin{equation}\label{eq_uni_conv_char}
\lim_{n\to\infty}\sup_{0\leq u\leq t}\sup_{z\in E}|f_{n,u}(z)-f_u(z)|=0.
\end{equation}
Then $f_t(z)$ is the characteristic of a W-functional $A_t$. Moreover,
\begin{equation*}
A_t=\llim_{n\to\infty} A_{n,t},
\end{equation*}
where $\llim$ denotes the convergence in mean square (for any initial distribution $\xi_0$).
\end{Theorem}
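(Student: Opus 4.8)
The plan is to reduce everything to a single second-moment estimate for continuous additive functionals and then obtain $A_t$ as an $L^2$-Cauchy limit. First I would record the elementary identity for a continuous increasing process $A$ with $A_0=0$: since $d(A_s^2)=2A_s\,dA_s$, one has $A_t^2=2\int_0^t(A_t-A_s)\,dA_s$, and the same computation gives $C_t^2=2\int_0^t(C_t-C_s)\,dC_s$ for an arbitrary continuous additive functional $C$ of bounded variation. Taking $C_t=A_t-B_t$ where $A,B$ are W-functionals with characteristics $f,g$, and writing $h_u=f_u-g_u$, the homogeneous additivity \eqref{eq_additive} yields $C_t-C_s=\theta_sC_{t-s}$, so that by the Markov property $\E_z[C_t-C_s\mid\mathcal N_s]=\E_{\xi_s}C_{t-s}=h_{t-s}(\xi_s)$. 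Approximating the Stieltjes integral by Riemann sums and conditioning each increment on the corresponding $\mathcal{N}_{s_i}$, I would get
\begin{equation*}
\E_z C_t^2=2\,\E_z\int_0^t h_{t-s}(\xi_s)\,dC_s,
\end{equation*}
and, using $|dC_s|\le dA_s+dB_s$ together with $|h_{t-s}(\xi_s)|\le\sup_{u\le t}\|h_u\|_E$, the central estimate
\begin{equation*}
\E_z(A_t-B_t)^2\le 2\Big(\sup_{u\le t}\|f_u-g_u\|_E\Big)\big(f_t(z)+g_t(z)\big).
\end{equation*}

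Next I would apply this to the pair $A_{n,\cdot},A_{m,\cdot}$. Condition \eqref{eq_uni_conv_char} makes the family $(f_{n,u})$ uniformly Cauchy on $[0,t]$, so $\sup_{u\le t}\|f_{n,u}-f_{m,u}\|_E\to0$, while $f_{n,t}(z)+f_{m,t}(z)$ stays bounded uniformly in $z$ (it converges uniformly to $2f_t$, which is bounded by Remark \ref{Remark_triangle_ineq}). Hence $\E_z(A_{n,t}-A_{m,t})^2\to0$ uniformly in $z$, so $(A_{n,t})_n$ is $L^2(P_z)$-Cauchy for each fixed $t$ and $z$; I define $A_t:=\llim_n A_{n,t}$. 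Since $L^2$-convergence entails $L^1$-convergence, the expectation passes to the limit, giving $\E_zA_t=\lim_n f_{n,t}(z)=f_t(z)$ and $\sup_z\E_zA_t=\|f_t\|_E<\infty$, so $f_t$ is a candidate characteristic.

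I would then verify the properties of Definition \ref{def_W_func}. Non-negativity and monotonicity in $t$ pass to the limit along an a.s.-convergent subsequence, first for all rational $t$ simultaneously and then by monotone extension to all $t$. For the additivity \eqref{eq_additive}, the Markov property gives $\E_z(\theta_sA_{n,t}-\theta_sA_{m,t})^2=\E_z\E_{\xi_s}(A_{n,t}-A_{m,t})^2\le\sup_w\E_w(A_{n,t}-A_{m,t})^2\to0$, so $\theta_sA_{n,t}\to\theta_sA_t$ in $L^2$, and the identity $A_{n,t+s}=A_{n,s}+\theta_sA_{n,t}$ survives the passage to the limit.

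The main obstacle is the continuity of $A_t$ in $t$, since fixed-$t$ $L^2$-convergence does not by itself produce continuous paths (the supremum of the signed difference $A_{n,\cdot}-A_{m,\cdot}$ need not be small). Here I would use the basic estimate $\E_wA_h^2\le2\|f_h\|_E f_h(w)$ applied to the increasing functional $A_{t+h}-A_t=\theta_tA_h$, obtaining $\E_z(A_{t+h}-A_t)^2\le2\|f_h\|_E^2$. The uniform convergence forces $f_u$ to be continuous in $u$ with $f_0=0$, and the subadditivity of Remark \ref{Remark_triangle_ineq} yields $\|f_h\|_E\to0$ as $h\downarrow0$; hence $A$ is stochastically continuous, and being monotone it admits a continuous modification. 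This makes $A_t$ a W-functional with characteristic $f_t$, unique up to equivalence by Proposition \ref{Proposition_uniquely_defined}, and $A_t=\llim_n A_{n,t}$. The genuinely delicate technical points to be carried out carefully are the justification of conditioning inside the Stieltjes integral and the upgrade of stochastic continuity to a.s. path continuity of the limit.
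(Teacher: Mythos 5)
Your reduction to the second-moment bound $\E_z(A_t-B_t)^2\le 2\bigl(\sup_{u\le t}\|f_u-g_u\|_E\bigr)\bigl(f_t(z)+g_t(z)\bigr)$ reconstructs precisely Dynkin's Lemma 6.5, which is the engine of the cited proof (it is the same bound this paper quotes when proving Lemma \ref{Lemma_converg_functionals}), and your Cauchy argument, the identification $\E_zA_t=f_t(z)$, and the passage of non-negativity, monotonicity and the additivity \eqref{eq_additive} to the limit are all sound. The genuine gap is the continuity step, and it fails at two distinct points. First, $\|f_h\|_E\to0$ as $h\downarrow0$ does not follow from your hypotheses: subadditivity alone is compatible with $\lim_{h\downarrow0}\|f_h\|_E>0$ (the function equal to $1$ for $h>0$ and $0$ at $h=0$ is subadditive), and \eqref{eq_uni_conv_char} can at best transfer this property from the $f_{n,\cdot}$, for which it is not part of the definition of a W-functional. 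A concrete counterexample: for the deterministic Markov process $\xi_t=z+t$ on $\mathds{R}$ and $V=\sum_{n\ge1}n\,\mathds{1}_{[n,\,n+1/n]}$, the functional $A_t=\int_0^tV(\xi_s)\,ds$ is a W-functional with $\|f_t\|_E\le t+2$, while $\|f_h\|_E=1$ for all $0<h\le1$; taking $A_{n,t}\equiv A_t$ satisfies \eqref{eq_uni_conv_char} trivially, yet your bound $\E_z(A_{t+h}-A_t)^2\le2\|f_h\|_E^2$ never becomes small. Second, even where $\|f_h\|_E\to0$ does hold, the inference ``monotone and stochastically continuous, hence a continuous modification'' is false: the Poisson process is increasing with $\E(N_{t+h}-N_t)^2=\lambda h+\lambda^2h^2\to0$ uniformly in $t$, yet admits no continuous modification, since modifications preserve finite-dimensional distributions. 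An $o(1)$ modulus in $L^2$ is not a Kolmogorov-type criterion; that would require a rate such as $h^{1+\beta}$, which nothing here provides.

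The correct completion --- in substance Dynkin's --- upgrades the fixed-$t$ convergence to uniform-in-$t$ convergence \emph{before} passing to the limit, so that continuity is inherited from the continuous paths of the $A_{n,\cdot}$ rather than manufactured afterwards. Your own conditioning identity does the job: with $D_u=A_{n,u}-A_{m,u}$ and $h_v=f_{n,v}-f_{m,v}$, additivity and the Markov property give $D_u=\E_z[D_t\mid\mathcal N_u]-h_{t-u}(\xi_u)$, whence
\begin{equation*}
\sup_{u\le t}|D_u|\le\sup_{u\le t}\bigl|\E_z[D_t\mid\mathcal N_u]\bigr|+\sup_{v\le t}\|h_v\|_E,
\end{equation*}
and Doob's $L^2$ maximal inequality (applied over a countable dense set of times, which suffices because $D$ is continuous) bounds the first term by $4\E_zD_t^2$, which your central estimate makes small uniformly in $z$. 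Choosing a subsequence along which $\sup_{u\le t}|A_{n_k,u}-A_{n_{k+1},u}|$ is summable $P_z$-a.s.\ for all $z$ simultaneously (possible since the bounds are uniform in $z$) yields a.s.\ uniform convergence on $[0,t]$, hence a continuous, increasing, additive limit, after which your remaining verifications go through unchanged. Note that Proposition \ref{Proposition_uniform_convergence} cannot substitute for this step, since it presupposes the continuity of the limit functional.
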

\begin{Proposition}[See \cite{Dynkin63}, Lemma 6.1$^\prime$]\label{Proposition_uniform_convergence} If for any $t\geq 0$ the
sequence of non-negative additive functionals
$\left\{A_{n,t}: n\geq 1\right\}$ of the Markov process
$(\xi_t)_{t\geq 0}$ converges in probability to a continuous
functional $A_t$, then the convergence in probability is
uniform, i.e.
$$
\forall \ T>0 \sup_{t\in[0,T]}|A_{n,t}-A_t|\to 0, \
n\to\infty, \ \mbox{in probability}.
$$
\end{Proposition}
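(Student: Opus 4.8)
The plan is to reduce the statement to the classical P\'olya--Dini phenomenon: a sequence of monotone functions converging to a continuous monotone limit converges uniformly on compact intervals. The structural fact that makes this available is that a non-negative additive functional is automatically non-decreasing in $t$. Indeed, combining the additivity relation \eqref{eq_additive} with non-negativity gives $A_{n,t+s}=A_{n,s}+\theta_s A_{n,t}\geq A_{n,s}$ for all $s,t\geq 0$, so that each trajectory $t\mapsto A_{n,t}$ is non-decreasing. First I would record that the limit inherits this monotonicity: passing to the limit in probability (along an almost surely convergent subsequence) in $A_{n,s}\leq A_{n,t}$ for $s\leq t$ shows that $t\mapsto A_t$ is non-decreasing as well, and since it is continuous by hypothesis, its trajectories are almost surely uniformly continuous on each interval $[0,T]$.

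Next I would fix $T>0$ and introduce a partition $0=t_0<t_1<\dots<t_N=T$ of mesh $h$. For $t\in[t_i,t_{i+1}]$, monotonicity of both $A_{n,\cdot}$ and $A_{\cdot}$ yields the two-sided bound
$$
A_{n,t_i}-A_{t_{i+1}}\leq A_{n,t}-A_t\leq A_{n,t_{i+1}}-A_{t_i},
$$
from which, writing $\omega_A(h)=\sup\{|A_s-A_t|:s,t\in[0,T],\ |s-t|\leq h\}$ for the (random) modulus of continuity of the limit, one obtains the sandwich estimate
$$
\sup_{t\in[0,T]}|A_{n,t}-A_t|\leq \max_{0\leq i\leq N}|A_{n,t_i}-A_{t_i}|+\omega_A(h).
$$
This is the heart of the argument: the supremum over the continuum $[0,T]$ is dominated by a maximum over finitely many nodes plus an oscillation term that does not involve $n$.

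It then remains to handle the two terms probabilistically, and here lies the one point that needs care. Given $\veps>0$ and $\delta>0$, almost sure uniform continuity gives $\omega_A(h)\to 0$ almost surely, hence in probability, as $h\downarrow 0$; so I would \emph{first} freeze a deterministic mesh $h$ small enough that $\P\{\omega_A(h)>\veps/2\}<\delta/2$. Only with the partition now fixed would I pass to the limit in $n$: the remaining term is a maximum over finitely many indices, and since $A_{n,t_i}\to A_{t_i}$ in probability for each of these finitely many nodes, $\max_{0\leq i\leq N}|A_{n,t_i}-A_{t_i}|\to 0$ in probability, so $\P\{\max_i|A_{n,t_i}-A_{t_i}|>\veps/2\}<\delta/2$ for all large $n$. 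Adding the two bounds gives $\P\{\sup_{t\in[0,T]}|A_{n,t}-A_t|>\veps\}<\delta$ for all large $n$, which is the assertion.

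The main obstacle I anticipate is precisely this ordering of quantifiers: the modulus of continuity $\omega_A(h)$ is random, so one cannot choose $h$ after sending $n\to\infty$. The resolution is to fix $h$ using convergence of $\omega_A(h)$ in probability, and only afterwards use the finiteness of the node set to let $n\to\infty$. Everything else reduces to the monotone sandwich, and it is in the passage to that sandwich that non-negativity of the functionals (through the induced monotonicity) is used in an essential way.
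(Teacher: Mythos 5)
The paper offers no proof of this proposition at all---it is quoted from Dynkin's book (Lemma 6.1$^\prime$)---and your argument is correct and essentially reproduces the classical proof behind that citation: the P\'olya--Dini sandwich over a finite partition, using the monotonicity forced by non-negativity plus additivity, with the random modulus of continuity frozen \emph{before} letting $n\to\infty$, which is exactly the right order of quantifiers. The only step worth spelling out is the monotonicity of the limit: subsequential a.s.\ convergence gives $A_s\leq A_t$ only a.s.\ for each fixed pair $s\leq t$, and one upgrades this to a single null set by running over rational pairs and invoking the assumed continuity of $t\mapsto A_t$; this is routine and does not affect the argument.
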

\begin{Example}\label{Example_general} Let $E=\R$,
$h$ be a non-negative bounded measurable function on $E$, let the process $(\xi_t)_{t\geq0}$  has a transition probability density $g_t(z_1,z_2)$. Then
$$
A_t:=\int_0^t h(\xi_s)ds
$$
is a $W$-functional of the process $(\xi_t)_{t\geq0}$ and its characteristic is equal to
$$
f_t(z)=\int_{E}\left(\int_0^tg_s(z,v)ds\right)h(v)dv=\int_{E}k_t(z,v)h(v)dv,
$$
where
$$
k_t(z,v)=\int_0^tg_s(z,v)ds.
$$

Let a measure $\nu$ be such that $\int_{E}k_t(z,v)\nu(dv)$ is well defined. If we can choose a sequence of non-negative bounded continuous functions $\{h_n: n\geq1\}$ such that for each $T>0,$
$$
\lim_{n\to\infty}\sup_{t\in[0,T]}\sup_{z\in E}\left|\int_{E}k_t(z,v)h_n(v)dv-\int_{E}k_t(z,v)\nu(dv)\right|=0,
$$
then by Theorem \ref{Theorem_Convergence_characteristics} there exists a W-functional $A_t^\nu$ corresponding to the measure $\nu$ with its characteristic being equal to $\int_{E}k_t(z,v)\nu(dv)$.
\end{Example}

Given a measure $\nu$, a sufficient condition for the existence of a corresponding W-functional is as follows.
\begin{Theorem}[See \cite{Dynkin63}, Theorem 6.6]\label{Theorem_sufficient_condition}  Let the condition
\begin{equation}
\lim_{t\downarrow 0}\sup_{z\in E}f_t(z)=\lim_{t\downarrow 0}\sup_{z\in E}\int_{E}k_t(z,y)\nu(dy)=0
\end{equation}
hold. Then $f_t(z)$
is the characteristic of a W-functional
$A_t^{\nu}$. Moreover,
$$
A_t^{\nu}=\llim_{\varepsilon\downarrow 0}\int_0^t \frac{f_\varepsilon(\xi_u)}{\varepsilon}du,
$$
and the sequence of characteristics of integral functionals $\int_0^t \frac{f_\varepsilon(\xi_u)}{\varepsilon}du$ converges to $f_t(z)$ in sense of the relation \eqref{eq_uni_conv_char}.
\end{Theorem}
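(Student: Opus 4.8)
The strategy is to realize $f_t$ as the limit of characteristics of elementary integral functionals and then invoke Theorem~\ref{Theorem_Convergence_characteristics}. For $\varepsilon>0$ I would set
$$
h_\varepsilon(z):=\frac{f_\varepsilon(z)}{\varepsilon}=\frac1\varepsilon\int_E k_\varepsilon(z,y)\,\nu(dy).
$$
The hypothesis $\lim_{t\downarrow0}\sup_{z\in E}f_t(z)=0$ guarantees that, for each fixed (small) $\varepsilon$, the function $h_\varepsilon$ is non-negative, bounded and measurable, so by Example~\ref{Example_general} the functional $A_t^\varepsilon:=\int_0^t h_\varepsilon(\xi_u)\,du=\int_0^t\varepsilon^{-1}f_\varepsilon(\xi_u)\,du$ is a W-functional whose characteristic is $f_t^\varepsilon(z)=\int_E k_t(z,v)h_\varepsilon(v)\,dv=\varepsilon^{-1}\int_E k_t(z,v)f_\varepsilon(v)\,dv$. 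It then suffices to prove that $f_t^\varepsilon\to f_t$ in the sense of \eqref{eq_uni_conv_char} as $\varepsilon\downarrow0$; Theorem~\ref{Theorem_Convergence_characteristics} will immediately yield that $f_t$ is the characteristic of a W-functional $A_t^\nu$ and that $A_t^\nu=\llim_{\varepsilon\downarrow0}A_t^\varepsilon$.

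First I would compute $f_t^\varepsilon$ in closed form. Writing $f_\varepsilon(v)=\int_E k_\varepsilon(v,y)\nu(dy)$, unfolding $k_t$ and $k_\varepsilon$ into time integrals of $g$, and applying Fubini's theorem together with the Chapman--Kolmogorov identity $\int_E g_s(z,v)g_r(v,y)\,dv=g_{s+r}(z,y)$, one obtains
$$
f_t^\varepsilon(z)=\frac1\varepsilon\int_E\nu(dy)\int_0^t ds\int_0^\varepsilon g_{s+r}(z,y)\,dr=\int_E\nu(dy)\int_0^{t+\varepsilon}g_u(z,y)\,\frac{w_\varepsilon(u)}{\varepsilon}\,du,
$$
where $w_\varepsilon(u)$ is the length of $\{s\in[0,t]:s\le u\le s+\varepsilon\}$, a trapezoidal weight equal to $\varepsilon$ on $[\varepsilon,t]$ and ramping linearly to $0$ on $[0,\varepsilon]$ and on $[t,t+\varepsilon]$. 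Since $f_t(z)=\int_E\nu(dy)\int_0^{t+\varepsilon}g_u(z,y)\,\mathds{1}_{[0,t]}(u)\,du$, the difference $\varepsilon^{-1}w_\varepsilon(u)-\mathds{1}_{[0,t]}(u)$ is bounded by $1$ in absolute value and supported on $[0,\varepsilon]\cup[t,t+\varepsilon]$, whence
$$
|f_t^\varepsilon(z)-f_t(z)|\le\int_E\nu(dy)\Big(\int_0^\varepsilon g_u(z,y)\,du+\int_t^{t+\varepsilon}g_u(z,y)\,du\Big)=f_\varepsilon(z)+\big(f_{t+\varepsilon}(z)-f_t(z)\big).
$$

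The main point is to bound the second, $t$-dependent term uniformly. Using Chapman--Kolmogorov once more and the fact that the process has infinite lifetime (so $\int_E g_t(z,w)\,dw=1$), I would write
$$
f_{t+\varepsilon}(z)-f_t(z)=\int_E\nu(dy)\int_t^{t+\varepsilon}g_u(z,y)\,du=\int_E g_t(z,w)f_\varepsilon(w)\,dw=\E_z f_\varepsilon(\xi_t)\le\|f_\varepsilon\|_E.
$$
Combined with $f_\varepsilon(z)\le\|f_\varepsilon\|_E$, this gives the \emph{uniform} estimate $\sup_{0\le u\le t}\sup_{z\in E}|f_u^\varepsilon(z)-f_u(z)|\le2\|f_\varepsilon\|_E=2\sup_{z\in E}f_\varepsilon(z)$, which tends to $0$ as $\varepsilon\downarrow0$ by the standing hypothesis. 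This verifies \eqref{eq_uni_conv_char}.

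Applying Theorem~\ref{Theorem_Convergence_characteristics} along any sequence $\varepsilon_n\downarrow0$ then shows that $f_t$ is the characteristic of a W-functional $A_t^\nu$ and that $A_t^{\varepsilon_n}\to A_t^\nu$ in mean square; by Proposition~\ref{Proposition_uniquely_defined} the limit is independent of the chosen sequence, so $A_t^\nu=\llim_{\varepsilon\downarrow0}\int_0^t\varepsilon^{-1}f_\varepsilon(\xi_u)\,du$, as claimed. The only genuinely delicate step is the passage from the pointwise heuristic $\varepsilon^{-1}w_\varepsilon\to\mathds{1}_{[0,t]}$ to a bound uniform in both $z$ and $t$; this is exactly what the semigroup identity $f_{t+\varepsilon}(z)-f_t(z)=\E_z f_\varepsilon(\xi_t)$ delivers, converting the boundary error into the already-controlled quantity $\|f_\varepsilon\|_E$.
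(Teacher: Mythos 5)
Your proof is correct and follows essentially the same route as the paper's source (Dynkin's Theorem 6.6, whose key calculation the paper reproduces in the proof of Lemma~\ref{Lemma_converg_functionals}): approximate by the integral functionals $\int_0^t \varepsilon^{-1}f_\varepsilon(\xi_u)\,du$, bound $\sup_{0\le u\le t}\sup_{z}|f_u^\varepsilon(z)-f_u(z)|\le 2\|f_\varepsilon\|_E$ via the Chapman--Kolmogorov identity $\E_z f_\varepsilon(\xi_s)=f_{s+\varepsilon}(z)-f_s(z)$, and invoke Theorem~\ref{Theorem_Convergence_characteristics}. Your trapezoidal-weight computation of $f_t^\varepsilon$ is just an unfolded version of the same identity, so the arguments coincide in substance.
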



Cosider a process $\eta_t=(\eta^1_t,\eta^2_t), t\geq 0,$ which is a (unique) solution to the system of SDEs:
\begin{equation}\label{eq_eta}
\left\{\begin{aligned}
d\eta^1_t&=dt,\\
d\eta^2_t&=a(\eta^1_t,\eta^2_t)dt+\sum_{k=1}^{m}\sigma_{k}(\eta^1_t,\eta^2_t)dw_{k}(t).
\end{aligned}
\right.
\end{equation}
Giving the initial condition  $\eta_0^1=t_0$, $\eta_0^2=x_0$, we denote the corresponding distribution of the process $(\eta_t)_{t\geq 0}$ by $\mathds{P}_{t_0,x_0}$.

The theory of additive functionals can be applied to $(\eta_t)_{t\geq0}$ because it is a homogeneous Markov process.


Let $h$ be a non-negative bounded measurable function on $E=[0,\infty)\times\R$. Then (c.f. Example \ref{Example_general})
$$
A_t=\int_0^t h(\eta_s)ds
$$
is a W-functional of the process $(\eta_t)_{t\geq0}$. Its characteristic is equal to
\begin{multline}\label{eq_character}
f_t(t_0,x_0)=\E_{t_0,x_0}\int_0^t  h(\eta_s)ds= \int_0^t ds\int_{\R}G(t_0,x_0,t_0+s,y)h(t_0+s,y)dy=\
\\
\int_{t_0}^{t_0+t} ds\int_{\R}G(t_0,x_0,s,y)h(s,y)dy,
\end{multline}
where $G(s,x,t,y)$, $0\leq s\leq t, \ x\in\R,y\in \R,$ is the transition probability density of the process $(\eta^2_t)_{t\geq0}$.

Let a measure $\nu$ on $[0,\infty)\times\R$ be such that $\int_{t_0}^{t_0+t} \int_{\R}G(t_0,x_0,s,y)\nu(ds,dy)<\infty$ for all $t\geq 0$, $t_0\geq0$, $x_0\in\R$. If there exists a sequence of non-negative bounded continuous functions  $\{h_n:\ n\geq 1\}$ such that for each $T>0$,
\begin{multline*}
\lim_{n\to\infty}\sup_{t\in[0,T]}\sup_{t_0\in[0,\infty), x_0\in\R}\left|\int_{t_0}^{t_0+t} ds\int_{\R}G(t_0,x_0,s,y)h_n(s,y)dy-\right.\\
\left.\int_{t_0}^{t_0+t} \int_{\R}G(t_0,x_0,s,y)\nu(ds,dy)\right|=0,
\end{multline*}
then by Theorem \ref{Theorem_Convergence_characteristics} there exists a W-functional corresponding to the measure $\nu$ with its
characteristic being equal to $\int_{t_0}^{t_0+t} \int_{\R}G(t_0,x_0,s,y)\nu(ds,dy)$.


\begin{Theorem}[Corollary of Theorem \ref{Theorem_sufficient_condition}]\label{Theorem_sufficient_condition_1}
Let the condition
\begin{equation}\label{Cond_A}
\lim_{t\downarrow 0}\sup_{t_0\in[0,\infty), \ x_0\in\R}f_t(t_0,x_0)=\lim_{t\downarrow 0}\sup_{t_0\in[0,\infty), \ x_0\in\R}\int_{t_0}^{t_0+t} \int_{\R}G(t_0,x_0,s,y)\nu(ds,dy)=0
\end{equation}
hold. Then $f_t(z), z\in[0,\infty)\times\R$, is the characteristic of a W-functional
$A_t^{\nu}$. Moreover,
$$
A_t^{\nu}=\llim_{\varepsilon\downarrow 0}\int_0^t \frac{f_\varepsilon(\eta_u)}{\varepsilon}du,
$$
and the sequence of characteristics of integral functionals $\int_0^t \frac{f_\varepsilon(\eta_u)}{\varepsilon}du$ converges to $f_t(z)$ in sense of the relation \eqref{eq_uni_conv_char}.
\end{Theorem}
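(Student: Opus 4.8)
The plan is to recognize Theorem \ref{Theorem_sufficient_condition_1} as the specialization of Theorem \ref{Theorem_sufficient_condition} to the homogeneous Markov process $(\eta_t)_{t\geq0}$ of \eqref{eq_eta} with phase space $E=[0,\infty)\times\R$. Since $(\eta_t)_{t\geq0}$ is a homogeneous Markov process, Dynkin's theory, and in particular Theorem \ref{Theorem_sufficient_condition}, applies to it; so the only thing that needs to be done is to identify the abstract kernel $k_t(z,v)$ appearing there with the space--time object built from the density $G$, and to verify that \eqref{Cond_A} is the corresponding instance of the hypothesis of Theorem \ref{Theorem_sufficient_condition}.

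First I would describe the transition mechanism of $\eta_t$. Because the first coordinate evolves deterministically as $\eta^1_t=t_0+t$ under $\P_{t_0,x_0}$, the law of $\eta_s$ started at $(t_0,x_0)$ is concentrated on the hyperplane $\{u=t_0+s\}$ and, for a measurable $h$, $\E_{t_0,x_0}h(\eta_s)=\int_\R G(t_0,x_0,t_0+s,y)h(t_0+s,y)\,dy$. Integrating in $s$ and changing the variable of integration gives the characteristic \eqref{eq_character} of the integral functional $\int_0^t h(\eta_s)\,ds$, namely $f_t(t_0,x_0)=\int_{t_0}^{t_0+t}ds\int_\R G(t_0,x_0,s,y)h(s,y)\,dy$. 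Pairing the corresponding kernel against a measure $\nu$ on $E$ then yields exactly $\int_{t_0}^{t_0+t}\int_\R G(t_0,x_0,s,y)\,\nu(ds,dy)$, so that the characteristic in the statement is precisely $f_t(z)=\int_E k_t(z,y)\,\nu(dy)$ with $z=(t_0,x_0)$ in the notation of Theorem \ref{Theorem_sufficient_condition}.

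With this identification the hypothesis \eqref{Cond_A} is exactly the condition $\lim_{t\downarrow0}\sup_{z\in E}f_t(z)=0$ of Theorem \ref{Theorem_sufficient_condition}. Therefore all of its conclusions transfer directly to $(\eta_t)_{t\geq0}$: the function $f_t$ is the characteristic of a W-functional $A_t^{\nu}$, the mean-square representation $A_t^{\nu}=\llim_{\varepsilon\downarrow0}\int_0^t\varepsilon^{-1}f_\varepsilon(\eta_u)\,du$ holds, and the characteristics of $\int_0^t\varepsilon^{-1}f_\varepsilon(\eta_u)\,du$ converge to $f_t$ in the sense of \eqref{eq_uni_conv_char}.

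The one point requiring care --- and the step I expect to be the main obstacle --- is that Theorem \ref{Theorem_sufficient_condition} is phrased in terms of a genuine transition probability density $g_s(z_1,z_2)$ on $E$, whereas the space--time process $\eta$ has no such density: its transition law carries a Dirac mass in the time coordinate. The way around this is to avoid appealing to a density on $E$ altogether and instead to read off the characteristic directly from the integral-functional computation \eqref{eq_character}, which is valid regardless of the degeneracy in the time direction. One then observes that the proof of Theorem \ref{Theorem_sufficient_condition} --- the approximation by the integral functionals $\int_0^t\varepsilon^{-1}f_\varepsilon(\eta_u)\,du$ together with the uniform-convergence criterion of Theorem \ref{Theorem_Convergence_characteristics} --- uses only the characteristic $f_t$ and the homogeneous additive structure, and not the existence of a density, so it carries over verbatim to $(\eta_t)_{t\geq0}$.
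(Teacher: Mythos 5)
Your proposal matches the paper's treatment: the paper gives no separate proof, presenting the result simply as Dynkin's Theorem \ref{Theorem_sufficient_condition} applied to the homogeneous space--time process $(\eta_t)_{t\geq0}$, with the characteristic identified via the computation \eqref{eq_character} exactly as you do, so that \eqref{Cond_A} becomes the hypothesis $\lim_{t\downarrow0}\sup_{z\in E}f_t(z)=0$. Your additional observation --- that the transition law of $\eta$ carries a Dirac mass in the time coordinate, so one must invoke Dynkin's theorem through the characteristic and the additive structure rather than through a transition density on $E=[0,\infty)\times\R$ --- is a genuine subtlety the paper leaves implicit, and you resolve it correctly.
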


Let $\eta_t(t_0,x_0)=(\eta_t^1(t_0,x_0),\eta_t^2(t_0,x_0))$ be a solution of equation (\ref{eq_eta}) starting from the point $(t_0,x_0)$ and defined on a probability space $(\Omega, \mathcal{F}, \mathcal{F}_t, \P)$ . Let  $\P_{t_0,x_0}$ be the distribution of the process $(\eta_t(t_0,x_0))_{t\geq0}$, where $t_0\geq0$, $x_0\in\R$. In Dynkin's notation (see \cite{Dynkin63})
$(\eta_t(t_0,x_0))_{t\geq0},\mathcal{F}_t, \P)$, $t_0\geq0, x_0\in\R$, is called a Markov family of random functions.

Let a measure $\nu$ satisfy the condition of Theorem \ref{Theorem_sufficient_condition_1}. Then there exists a W-functional $A_t^\nu$  of the process $(\eta_t)_{t\geq0}$. According to the definition of  W-functionals, the functional  is measurable w.r.t. $\sigma$-algebra generated by the process $(\eta_t)_{t\geq0}$. Since the process  $(\eta_t)_{t\geq0}$ is continuous and has the infinite life-time, we can consider $A_t^{\nu}=A_t^{\nu}(\cdot)$ as a measurable function on $[0,\infty)\times C([0,\infty),\R)$ that depends only on the behavior of the process on $[0,t]$.
The composition $A_t^\nu(\eta_{\cdot}(t_0,x_0))$, $t\geq 0$, is called a W-functional of $(\eta_t(t_0,x_0))_{t\geq 0}$ corresponding to the measure $\nu$. The function $A_t^\nu(\eta_{\cdot}(t_0,x_0))$ is defined for all $t_0\geq0, x_0\in\R$.

If $t_0=0$, $x_0=x$, the process $\eta_t^2(0,x)=\eta_t^2(x)$ is a solution of equation (\ref{eq_main}) starting from $x$ and therefore $\eta_t^2(x)=\varphi_t(x)$. Then $\eta_t(0,x)=(t,\varphi_t(x))$. Since the first coordinate $\eta_t^1(t_0,x_0)=t_0+t$ is non-random, we denote $A_t^\nu(\eta_{\cdot}(0,x))$ as $A_t^\nu(\varphi_{\cdot}(x))$.

Let us show that  the condition \eqref{Cond_A} can be replaced by a more convenient condition. If $a$ and $\sigma$ are bounded and measurable, and $\sigma$ satisfies condition (C2), then the transition probability density of the process $(\eta_2(t))_{t\geq0}$ satisfies the Gaussian estimates
(see \cite{Aronson67}):
\begin{equation}\label{eq_gaussian_estimates}
\frac{C_1}{t^{d/2}}\exp\left\{-c_1\frac{|y-x|^2}{t-s}\right\}\leq G(s,x,t,y)\leq \frac{C_2}{t^{d/2}}\exp\left\{-c_2\frac{|y-x|^2}{t-s}\right\}
\end{equation}
valid in every domain of the form $0\leq s<t\leq T, x\in\R, y\in\R,$ where $T>0$. Constants $C_1, c_1, C_2, c_2$ are positive and depend only on $d, T,$  $\|a\|_{T,\infty}$,  $\|\sigma_k\|_{T,\infty}, 1\leq k\leq m$, and ellipticity constant $B$, where
$\|a\|_{T,\infty}=\sup_{t\in[0,T]}\sup_{x\in\R}\|a(t,x)\|$.

Denote by $p_0(s,x,t,y)$ the transition probability density of a Wiener process:
\begin{equation}\label{eq_Wiener_density}p_0(s,x,t,y)=\frac{1}{(2\pi(t-s))^{d/2}}\exp\left\{-\frac{|y-x|^2}{2(t-s)}\right\}.
\end{equation}
By analogy with the Kato class (c.f. \cite{Kuwae+07}), we introduce the following definition.
\begin{Definition}
\label{def_Kato_type}
A measure $\nu$  on $[0,\infty)\times\R$ is a measure of  the class $\mathcal{K}$ if
\begin{equation} \label{Cond_A_prime}
\lim_{t\downarrow 0}\sup_{t_0\in[0,\infty), \ x_0\in\R}\int_{t_0}^{t_0+t}\int_{\R}p_0(t_0,x_0,s,y)\nu(ds,dy)=0.
\end{equation}
\end{Definition}
Taking into account (\ref{eq_gaussian_estimates}) it is easy to see that $\nu$ satisfies the condition (\ref{Cond_A}) if and only if it is of the class  $\mathcal{K}$.
\begin{Definition}A signed measure $\nu$ is of the class  $\mathcal{K}$ if the measure $|\nu|$ is of the class  $\mathcal{K}$, where $|\nu|=\nu^++\nu^-$ is the variation of $\nu$; $\nu^+,\nu^-$ are the measures from the Hahn-Jordan decomposition $\nu=\nu^+-\nu^-$.
\end{Definition}
Let $\nu=\nu^+-\nu^-$ be a signed measure belonging to the class $\mathcal{K}$. Then by Theorem \ref{Theorem_sufficient_condition} there exist W-functionals $A_t^{\nu^{\pm}}$. Denote $A_t^{\nu}=A_t^{\nu^+}-A_t^{\nu^-}$.
\begin{Remark}\label{Remark_Hahn_decomp}
Suppose that the signed measure $\nu$ can be represented in the form $\nu=\widetilde{\nu}^+-\widetilde{\nu}^-$, where $\widetilde{\nu}^+$, $\widetilde{\nu}^-$ are of the class $\mathcal{K}$ but  are not necessarily orthogonal. Then one can see that $A_t^{\nu^+}-A_t^{\nu^-}=A_t^{\widetilde\nu^+}-A_t^{\widetilde\nu^-}$.
\end{Remark}

In what follows we will often deal with measures which have densities with respect to the Lebesgue measure on $[0,\infty)\times\R$.
\begin{Definition} A measurable function $h$ on $[0,\infty)\times\R$ is called a function of  the class $\mathcal{K}$ if the signed measure $\nu(ds,dy)=h(s,y)dsdy$ is of the class $\mathcal{K}$.
\end{Definition}
\begin{Remark}\label{Remark_homogeneous_coef}
Let $\nu(ds,dx)=\mu(dx)ds$, where $\mu$ is a measure on $\R$. Then the relation (\ref{Cond_A_prime}) transforms into the following one
\begin{equation}\label{eq_Kato_simple}
\lim_{t\downarrow 0}\sup_{x_0\in\R}\int_{0}^{t}ds\int_{\R}p_0(0,x_0,s,y)\mu(dy)=0.
\end{equation}
It was shown (e.g., Theorem 2.1 in \cite{Chen2002}) that $\mu$ satisfies the condition \eqref{eq_Kato_simple} if and only if
\begin{eqnarray}\label{eq_Kato_class_1}
\sup_{x\in\mathds{R}}\int_{|x-y|\leq1}\mu(dy)<\infty,& \mbox{when} \ d=1;\\
\label{eq_Kato_class_2}\lim_{\varepsilon\downarrow0}\sup_{x\in\mathds{R}^2}\int_{|x-y|\leq\varepsilon}\ln\frac{1}{|x-y|}\mu(dy)=0,& \mbox{when} \ d=2;\\
\lim_{\varepsilon\downarrow0}\sup_{x\in\mathds{R}^d}\int_{|x-y|\leq\varepsilon}|x-y|^{2-d}\mu(dy)=0,& \mbox{when} \ d\geq3.\label{eq_Kato_class_3}
\end{eqnarray}
Consider now a measure $\nu$ of the form $\nu(ds,dx)=\mu(s,dx)ds$. Similarly to \eqref{eq_Kato_class_1}-\eqref{eq_Kato_class_3} one can obtain that if for each $T>0$,  $\mu$ satisfies the condition
\begin{eqnarray}
\sup_{t\in[0,\infty)}\sup_{x\in\mathds{R}}\int_{|x-y|\leq1}\mu(t,dy)<\infty,& \mbox{when} \ d=1,\label{eq_Kato_class_1_prime}\\
\lim_{\varepsilon\downarrow0}\sup_{t\in[0,\infty)}\sup_{x\in\mathds{R}^2}\int_{|x-y|\leq\varepsilon}\ln\frac{1}{|x-y|}\mu(t,dy)=0,& \mbox{when} \ d=2,\label{eq_Kato_class_2_prime}\\
\lim_{\varepsilon\downarrow0}\sup_{t\in[0,\infty)}\sup_{x\in\mathds{R}^d}\int_{|x-y|\leq\varepsilon}|x-y|^{2-d}\mu(t,dy)=0,& \mbox{when} \ d\geq3,\label{eq_Kato_class_3_prime}
\end{eqnarray}
then the measure $\nu$ is of the class $\mathcal{K}$.
\end{Remark}
\begin{Remark}\label{Remark_measure_finite}
Let the measure $\nu(ds,dx)=\mu(s,dx)ds$ satisfy one of the conditions \eqref{eq_Kato_class_1_prime}-\eqref{eq_Kato_class_3_prime}. Then it can be verified (c.f. \cite{Dynkin63}, Lemma 8.3) that for each $T>0$, $r>0$, there exists $K=K(r,T)>0$ such that for all $x\in\R$, $t\in[0,T]$,
$$
\mu(t,B(x,r))<K,
$$
where $B(x,r)$ is the ball with center at $x$ and radius $r$.
\end{Remark}

In the sequel we  use the following modification of Khas'minskii's lemma (see \cite{Khasminskii59} or \cite{Sznitman98}, Ch.1, Lemma 2.1).
 \begin{Lemma} \label{Lemma_exp_moment} Let $A_t$  be a W-functional with the characteristic $f_t$ satisfying the condition (\ref{Cond_A}). Then for all $p>0$, $t\geq0$, there exists a constant $C>0$ depending on $p,t$, and the rate of convergence in (\ref{Cond_A}) such that
 $$
 \sup_{t_0\in[0,\infty), x_0\in\R}\E_{t_0,x_0}\exp\{pA_t\}<C.
 $$
 \end{Lemma}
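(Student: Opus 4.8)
The plan is to reduce the exponential bound to a moment estimate on $A_t$, control the moments by combining the additivity \eqref{eq_additive} with the Markov property of the underlying process, and then pass from small $t$ to arbitrary $t$ by cutting $[0,t]$ into short blocks. I work in the abstract W-functional setting with state $z\in E=[0,\infty)\times\R$, so that $\E_z=\E_{t_0,x_0}$ and $\sup_z=\sup_{t_0,x_0}$. First I note that it suffices to treat $p=1$: the functional $pA_t$ is again a W-functional with characteristic $pf_t$, and $pf_t$ still satisfies \eqref{Cond_A} (multiplying by a constant preserves the limit being $0$), so replacing $A_t$ by $pA_t$ reduces to $p=1$. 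I then expand $\exp\{A_t\}=\sum_{n\geq0}A_t^n/n!$ and aim to bound $h_n(t,z):=\E_z A_t^n$.

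The key step is a moment recursion. Since $A$ is continuous and non-decreasing with $A_0=0$, one has the pathwise Stieltjes identity $A_t^n=n\int_0^t (A_t-A_s)^{n-1}\,dA_s$. By additivity, $A_t-A_s=\theta_s A_{t-s}$, and by the Markov property $\E_z\big[(\theta_s A_{t-s})^{n-1}\mid\mathcal{N}_s\big]=\E_{\xi_s}A_{t-s}^{n-1}=h_{n-1}(t-s,\xi_s)$. Monotonicity of $A$ in time gives $h_{n-1}(t-s,\cdot)\leq \sup_z h_{n-1}(t,z)=:H_{n-1}(t)$, so inserting the conditional expectation under the $dA_s$-integral yields
$$
h_n(t,z)\leq n\,H_{n-1}(t)\,\E_z\!\int_0^t dA_s=n\,H_{n-1}(t)\,f_t(z).
$$
Writing $\alpha(t):=\|f_t\|_E=\sup_z f_t(z)$ and taking the supremum over $z$ gives $H_n(t)\leq n\,\alpha(t)\,H_{n-1}(t)$, whence $H_n(t)\leq n!\,\alpha(t)^n$ by induction (with $H_0\equiv1$). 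Summing the series, for every $t$ with $\alpha(t)<1$ I obtain $\sup_z\E_z\exp\{A_t\}\leq (1-\alpha(t))^{-1}$.

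By the hypothesis \eqref{Cond_A}, $\alpha(t)\to0$ as $t\downarrow0$, so I can fix $\tau>0$ with $\alpha(\tau)\leq 1/2$; this choice of $\tau$ is precisely where the rate of convergence in \eqref{Cond_A} enters, and it gives $\sup_z\E_z\exp\{A_\tau\}\leq 2$. For arbitrary $t\geq0$ I set $N=\lceil t/\tau\rceil$ and write, using additivity, $A_t=\sum_{j=0}^{N-1}\theta_{j\tau}A_{r_j}$ with $r_j=\big((j+1)\tau\wedge t\big)-j\tau\leq\tau$. Since $A_{r_j}\leq A_\tau$ along the shifted trajectory, I condition successively from the last block backwards: at each step the product of the earlier factors is $\mathcal{N}_{j\tau}$-measurable, while the conditional expectation of the last factor is $\E_{\xi_{j\tau}}\exp\{A_{r_j}\}\leq\E_{\xi_{j\tau}}\exp\{A_\tau\}\leq 2$. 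Iterating $N$ times gives $\sup_z\E_z\exp\{A_t\}\leq 2^N=:C$; restoring $p$ (i.e. replacing $A$ by $pA$) shows $C$ depends only on $p$, $t$, and the rate in \eqref{Cond_A}, as claimed.

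The main obstacle I anticipate is the rigorous justification of the two conditioning manipulations: inserting $\E_z[\,\cdot\mid\mathcal{N}_s]$ under the $dA_s$-integral, and the iterated conditioning in the block decomposition. Both rest on the facts that $(A_s)$ is adapted to $\{\mathcal{N}_s\}$ and that $\theta_s A_{t-s}$ is a functional of the post-$s$ trajectory, so that the Markov property applies at each deterministic time $s$; the continuity of $A$ guarantees that the Stieltjes identity and the projection against $dA_s$ carry no jump corrections. Once these measure-theoretic points are settled (they are standard in Dynkin's framework), the rest is the elementary recursion and geometric summation above.
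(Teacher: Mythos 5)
Your proof is correct, and it is essentially the argument the paper relies on: the paper gives no proof of this lemma but invokes Khas'minskii's lemma by citation (\cite{Khasminskii59}; \cite{Sznitman98}, Ch.~1, Lemma 2.1), whose standard proof is exactly your moment recursion $\sup_z\E_z A_t^n\leq n!\,\|f_t\|_E^n$ via the Markov property, geometric summation for small $t$, and iteration over blocks of length $\tau$ (the ``modification'' extending the bound to all $t$, for which the paper has also prepared the subadditivity $\|f_{t+s}\|_E\leq\|f_t\|_E+\|f_s\|_E$ in Remark \ref{Remark_triangle_ineq}). The two conditioning steps you flag are indeed standard in Dynkin's framework (projection against the continuous adapted increasing process $A$ plus the simple Markov property at deterministic times), so there is no gap.
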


\begin{Example}\label{Example_continuous}
Let $\nu(dt,dx)=h(t,x)dtdx$, where $h$ is a non-negative bounded measurable function. Then the measure $\nu$ is of the class $\mathcal{K}$.  The functional
$$
A_t:=\int_0^t h(\eta_s)ds
$$
is a W-functional of the process $(\eta_t)_{t\geq0}$ with characteristic defined by \eqref{eq_character}, and
$$
A_t(\varphi(x))=\int_0^t h(s,\varphi_s(x))ds.
$$
\end{Example}
\begin{Example} {\it Local time.}\label{Example_loc_time}
Let $d=1$.  It is well known that for each $x\in\mathds{R}$, $y\in\mathds{R}$ there exists a local time of the process $(\varphi_t(x))_{t\geq0}$ at the point $y$, which is defined by the formula
$$
L_t^{y}(\varphi(x))=\llim_{\varepsilon\downarrow 0}\frac{1}{2\varepsilon}\int_0^t \mathds{1}_{[y-\varepsilon,y+\varepsilon]}(\varphi_s(x))ds.
$$

It can be checked that $L_t^{y}(\varphi(x))$ is a W-functional of $(\varphi_t(x))_{t\geq0}$ corresponding to the measure $\nu(ds,dx)=ds\delta_y(dx)$, where $\delta_y$ is the delta measure at the point $y$.
Indeed,
for fixed $y\in\mathds{R}$ and each $\varepsilon>0$, put
$$
h^{\varepsilon,y}(t,x)=h^{\varepsilon,y}(x)=\frac{1}{2\varepsilon}\mathds{1}_{[y-\varepsilon,y+\varepsilon]}(x), t\geq 0, x\in \mathds{R},
$$
and $\nu^{\varepsilon,y}(dt,dx)=h^{\varepsilon,y}(t,x)dtdx$. The function $h^{\varepsilon,y}$ is bounded and measurable. Then (see Example \ref{Example_continuous}) there exists a W-functional of the process $(\eta_t)_{t\geq0}$ corresponding to the measure $\nu^{\varepsilon,y}$. This functional  is   defined by the formula
$$
A_t^{\varepsilon,y}:=A_t^{\nu^{\varepsilon,y}}=\int_{0}^{t}h^{\varepsilon,y}(\eta_s)ds=\frac{1}{2\varepsilon}\int_{0}^{t} \mathds{1}_{[y-\varepsilon,y+\varepsilon]}(\eta_s^2)ds
$$
and its characteristic is
equal to
$$
f_t^{\varepsilon,y}(t_0,x_0)=\E_{t_0,x_0}A_t^{{\varepsilon,y}}(\eta)=\int_{t_0}^{t_0+t}ds\int_{\R}
G(t_0,x_0,s,v)h^{\varepsilon,y}(s,v)dv.
$$
One can see that $f_t^{\varepsilon,y}(t_0,x_0)$ tends to $$
f_t^y(t_0,x_0)=\int_{t_0}^{t_0+t}G(t_0,x_0,s,y)ds=\int_{t_0}^{t_0+t}ds\int_{\R}G(t_0,x_0,s,v)\delta_y(dv)
$$
as $\varepsilon \to 0$ uniformly in $t\in[0,T], t_0\in [0,\infty], x_0\in\mathds{R}$. Then by Theorem \ref{Theorem_Convergence_characteristics}  there exists a functional
$$
A_t^{y}=\llim_{\varepsilon\downarrow 0}A_t^{\varepsilon,y}=\llim_{\varepsilon\downarrow 0}\int_{0}^{t}h^{\varepsilon,y}(\eta_s)ds.
$$
In particular,
$$
A_t^{y}(\varphi_t(x))=L_t^{y}(\varphi(x)).
$$

Note that if $d\geq 2$, the measure $\delta_y$ is not of the class $\mathcal{K}$. This agrees with the well-known fact that the local time for a multidimensional Wiener process does not exist.
%

\end{Example}
The following  lemma deals with the convergence of W-functionals of, generally speaking, different random functions.
\begin{Lemma}\label{Lemma_converg_functionals}
Let $\{(\xi_{n,t})_{t\geq0}: n\geq 0\}$ be a sequence of homogeneous Markov random functions defined on a common probability space  $(\Omega,\mathcal F, P)$ with the common phase space $(E,\mathcal{B})$, where $E$ is a metric space, $\mathcal{B}$ is the Borel $\sigma$-algebra. For $n\geq 0$, let $A_{n,t}=A_{n,t}(\xi_n)$ be a W-functional of the random function $(\xi_{n,t})_{t\geq0}$ with the characteristic $f_{n,t}(z)$.

Assume that
\begin{enumerate}[1)]
\item for each $t\geq 0$, $f_{0,t}(z)$ is continuous in $z\in E$;
\item for each $t\geq 0$, $\xi_{n,t}\to\xi_{0,t}$, $n\to\infty$, {in probability}   $P$;
\item for all $n\geq 0$,
$$
\lim_{\delta\downarrow 0}\|f_{n,\delta}\|_E=0,
$$
where
$$
\|f_{n,\delta}\|_E=\sup_{z\in E}|f_{n,\delta}(z)|;
$$
\item for each $t>0$, $\|f_{n,t}-f_{0,t}\|_{E}\to 0$, $n\to\infty$.
\end{enumerate}
Then for each $T>0$,
$$
\sup_{t\in[0,T]}|A_{n,t}(\xi_n)-A_{0,t}(\xi_0)|\to 0, \ n\to\infty, \ \mbox{in probability} \  P.
$$
\end{Lemma}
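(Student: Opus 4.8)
The plan is to reduce the uniform‑in‑$t$ assertion to convergence at finitely many fixed times, and to compare functionals of the two different processes $\xi_n$ and $\xi_0$ through the Dynkin integral approximants supplied by Theorem \ref{Theorem_sufficient_condition}. Throughout I use that every W‑functional is non‑negative, non‑decreasing and continuous in $t$. \emph{Reduction of the supremum.} For a deterministic partition $0=t_0<\dots<t_K=T$ and $t\in[t_i,t_{i+1}]$, monotonicity of $A_{n,\cdot}(\xi_n)$ and $A_{0,\cdot}(\xi_0)$ gives
\[
\sup_{t\le T}|A_{n,t}(\xi_n)-A_{0,t}(\xi_0)|\le \max_{0\le i\le K}|A_{n,t_i}(\xi_n)-A_{0,t_i}(\xi_0)|+\max_{0\le i<K}\bigl(A_{0,t_{i+1}}(\xi_0)-A_{0,t_i}(\xi_0)\bigr).
\]
Since $A_{0,\cdot}(\xi_0)$ is continuous on $[0,T]$, the maximal‑increment term tends to $0$ in probability as the mesh shrinks; hence it suffices to prove that for each fixed $t$, $A_{n,t}(\xi_n)\to A_{0,t}(\xi_0)$ in probability.

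To compare functionals of different processes I interpolate through the integral functionals $B_{n,t}^\varepsilon:=\int_0^t \varepsilon^{-1}f_{n,\varepsilon}(\xi_{n,u})\,du$, whose characteristic $g_{n,t}^\varepsilon$ satisfies $\|f_{n,t}-g_{n,t}^\varepsilon\|_E\le 2\|f_{n,\varepsilon}\|_E$ for every $t$; this comes from a direct computation using $g_{n,t}^\varepsilon(z)=\varepsilon^{-1}\bigl(\int_t^{t+\varepsilon}f_{n,u}(z)\,du-\int_0^\varepsilon f_{n,u}(z)\,du\bigr)$ together with the monotonicity and subadditivity (Remark \ref{Remark_triangle_ineq}) of $u\mapsto f_{n,u}$. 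The first key point is uniform smallness,
\[
\lim_{\varepsilon\downarrow 0}\sup_{n\ge 0}\|f_{n,\varepsilon}\|_E=0 ,
\]
obtained by combining hypotheses 3) and 4) with monotonicity in $t$: fixing a small $t^\ast$, for $\varepsilon\le t^\ast$ and $n$ large one has $\|f_{n,\varepsilon}\|_E\le\|f_{n,t^\ast}\|_E\le\|f_{0,t^\ast}\|_E+\|f_{n,t^\ast}-f_{0,t^\ast}\|_E$, while the finitely many remaining $n$ are controlled by 3). Assumption 4) also yields $M_t:=\sup_n\|f_{n,t}\|_E<\infty$.

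For the two outer differences I would use Dynkin's second‑moment computation. Writing $C_t=A_{n,t}-B_{n,t}^\varepsilon$, a continuous additive functional of $\xi_n$ with $\sup_t\|\mathds{E}_\cdot C_t\|_E\le 2\|f_{n,\varepsilon}\|_E$, one has $\mathds{E}_z C_t^2=2\mathds{E}_z\int_0^t h_{t-s}(\xi_{n,s})\,dC_s$ with $h=\mathds{E}_\cdot C$, and since $|dC_s|\le dA_{n,s}+dB_{n,s}^\varepsilon$,
\[
\sup_{n}\sup_z\mathds{E}_z\bigl(A_{n,t}-B_{n,t}^\varepsilon\bigr)^2\le 4\sup_n\|f_{n,\varepsilon}\|_E\,\bigl(2M_t+2\sup_n\|f_{n,\varepsilon}\|_E\bigr)\xrightarrow[\varepsilon\downarrow0]{}0 .
\]
Averaging over the initial value $\xi_{n,0}$ transfers this to the common space, so by Chebyshev's inequality both $|A_{n,t}(\xi_n)-B_{n,t}^\varepsilon|$ and (the case $n=0$, which is also Theorem \ref{Theorem_sufficient_condition}) $|A_{0,t}(\xi_0)-B_{0,t}^\varepsilon|$ are small uniformly in $n$ once $\varepsilon$ is small.

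For the middle term, with $\varepsilon$ now fixed, $|B_{n,t}^\varepsilon-B_{0,t}^\varepsilon|\le \int_0^t\varepsilon^{-1}|f_{n,\varepsilon}(\xi_{n,u})-f_{0,\varepsilon}(\xi_{0,u})|\,du$; splitting the integrand and using $\|f_{n,\varepsilon}-f_{0,\varepsilon}\|_E\to 0$ (hypothesis 4), the continuity of $f_{0,\varepsilon}$ (hypothesis 1) together with $\xi_{n,u}\to\xi_{0,u}$ in probability (hypothesis 2), and bounded convergence after taking expectations, gives $\mathds{E}|B_{n,t}^\varepsilon-B_{0,t}^\varepsilon|\to 0$ as $n\to\infty$. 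Choosing first $\varepsilon$ small (to kill the two outer terms uniformly in $n$) and then $n$ large settles the fixed‑$t$ convergence, and the partition estimate of the first paragraph then finishes the proof. The hard part is securing uniformity in $n$: everything hinges on upgrading the pointwise hypotheses 3) and 4) to $\sup_n\|f_{n,\varepsilon}\|_E\to0$ and on the second‑moment bound being uniform in $n$, and the technical core is justifying the additive‑functional second‑moment identity for the signed functional $A_{n,t}-B_{n,t}^\varepsilon$ via Dynkin's machinery.
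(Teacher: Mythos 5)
Your proposal is correct and follows essentially the same route as the paper: interpolation through the Dynkin integral approximants $\delta^{-1}\int_0^t f_{n,\delta}(\xi_{n,s})\,ds$, the estimate $\sup_u\|f_{n,u}-f_{n,u}^\delta\|_E\leq 2\|f_{n,\delta}\|_E$, the second-moment bound of Dynkin's Lemma 6.5 for the outer terms (made uniform in $n$ via hypotheses 3) and 4)), and hypotheses 1), 2), 4) with dominated convergence for the middle term. The only cosmetic differences are that you re-derive the second-moment inequality and replace the paper's citation of Proposition \ref{Proposition_uniform_convergence} (Dynkin's Lemma 6.1$'$) by an explicit partition--monotonicity argument, both of which are sound.
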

\begin{proof}
Note that $A_{n,t}^\delta:=\frac1\delta \int_0^tf_{n,\delta}(\xi_{n,s})ds$ is a W-functional of the process $(\xi_{n,t})_{t\geq0}$. Denote its characteristic by $f_{n,t}^\delta$. Then by \cite{Dynkin63}, Lemma 6.5, for all $t\geq 0, z\in E$,
$$
\E_z\left(A_{n,t}-\frac1\delta \int_0^tf_{n,\delta}(\xi_{n,s})ds\right)^2\leq 2\left(f_{n,t}(z)+f_{n,t}^\delta(z)\right)\sup_{0\leq u\leq t}\|f_{n,u}-f_{n,u}^\delta\|_{E}.
$$
Similarly to the proof of \cite{Dynkin63}, Theorem 6.6, we get
\begin{multline*}
|f_{n,t}^\delta(z)-f_{n,t}(z)|\leq \\
\frac 1\delta \int_{t}^{t+\delta}|f_{n,u}(z)-f_{n,t}(z)|du
+\frac 1\delta\int_0^\delta f_{n,u}(z)du
\leq\\
\frac 1\delta\int_t^{t+\delta}\|f_{n,u-t}\|_Edu+\frac 1\delta\int_0^\delta\|f_{n,u}\|_Edu\leq 2\|f_{n,\delta}\|_{E}.
\end{multline*}
So for all $t\geq 0$,
\begin{equation}\label{eq_alfa}
\sup_{0\leq u\leq t}\|f_{n,u}^\delta-f_{n,u}\|_{E}\leq 2\|f_{n,\delta}\|_{E}.
\end{equation}
Using the calculations of the proof of \cite{Dynkin63}, Theorem 6.6, once more we obtain
\begin{multline}\label{eq_beta}
f_{n,t}(z)+f_{n,t}^\delta(z)=f_{n,t}(z)+\frac 1\delta\int_t^{t+\delta}f_{n,u}(z)du-\frac1\delta\int_0^\delta f_{n,u}(z)du\leq\\ \|f_{n,t}\|_E+\|f_{n,t+\delta}\|_E\leq 2\|f_{n,t+\delta}\|_E.
\end{multline}
The inequalities \eqref{eq_alfa} and \eqref{eq_beta} give us the relation
\begin{equation}\label{abcd}
\E_z\left(A_{n,t}(\xi_n)-\frac1\delta \int_0^tf_{n,\delta}(\xi_{n,s})ds\right)^2\leq 8\|f_{n,\delta}\|_{E}\|f_{n,t+\delta}\|_E.
\end{equation}

Further, we have
\begin{multline}\label{4abcd}
\E_z(A_{n,t}(\xi_n)-A_{0,t}(\xi_0))^2\leq
4\left[\E_z\left(A_{n,t}(\xi_n)-\frac1\delta \int_0^tf_{n,\delta}(\xi_{n,s})ds\right)^2\right.+\\
\E_z\left(\frac1\delta \int_0^t f_{n,\delta}(\xi_{n,s})ds-\frac 1\delta\int_0^tf_{0,\delta}(\xi_{n,s})ds\right)^2+\\
\E_z\left(\frac 1\delta\int_0^t f_{0,\delta}(\xi_{n,s})ds-\frac1\delta \int_0^t f_{0,\delta}(\xi_{0,s})ds\right)^2+\\
\left.\E_z\left(\frac1\delta \int_0^t f_{0,\delta}(\xi_{0,s})ds-A_{0,t}(\xi_0)\right)^2\right]=
4[I+II+III+IV].
\end{multline}

For any $\varepsilon>0$, by assumption 3) we can choose $\delta>0$ such that  $\|f_{0,\delta}\|_E< \varepsilon$.
According to 4) there exists $n_0>0$ such that for all $n>n_0$,
$$
\|f_{n,\delta}-f_{0,\delta}\|_{E}<\varepsilon.
$$
Then for all $n>n_0$,
$$
\|f_{n,\delta}\|_{E}\leq\|f_{n,\delta}-f_{0,\delta}\|_{E}+\|f_{0,\delta}\|_{E}<2\varepsilon.
$$
Notice that for each $n\geq0$, $k\geq 1$,
$\|f_{n,k\delta}\|_{E}\leq k\|f_{n,\delta}\|_{E}$. This implies that for any $t\geq 0$,
$M_t:=\sup_{n\geq0}\|f_{n,t}\|_{E}<\infty$.
Taking into account (\ref{abcd}), we obtain that for all $n>n_0$,
$$
I\leq 16M_{t+\delta}\varepsilon,
$$ and the same estimate holds for $IV$.

By the H\"older inequality,
$$
II\leq\frac{t}{\delta^2}\E_z\int_0^t\left(f_{n,\delta}(\xi_{n,s})-f_{0,\delta}(\xi_{n,s})\right)^2ds\leq \frac{t^2}{\delta^2}\E_z\sup_{z\in E} \left(f_{n,\delta}(z)-f_{0,\delta}(z)\right)^2
$$
The assumptions  4) yields the estimate
$
II\leq \varepsilon
$
valid for all $n\geq n_1=n_1(\varepsilon,\delta)$.

Similarly,
$$
III\leq\frac{t}{\delta^2}\E_z\int_0^t\left(f_{0,\delta}(\xi_{n,s})-f_{0,\delta}(\xi_{0,s})\right)^2ds.
$$
The continuity of the function $f_{0,t}(\cdot)$ and assumption 2) provide the convergence
$f_{0,\delta}(\xi_{n,s})$ to $f_{0,\delta}(\xi_{0,s})$ as $n$ tends to $\infty$ in probability. This convergence together with 3) allow us to use the  dominated convergence theorem and prove that $III\to 0$ as $n\to\infty$.  Then the right-hand side of \eqref{4abcd} tends to $0$ as $n$ tends to $\infty$. The uniform convergence follows from  Proposition \ref{Proposition_uniform_convergence}. This completes the proof.
\end{proof}

\section{The main result}\label{Section_main_result}
The main result on differentiability with respect to the initial data of a flow generated by equation (\ref{eq_main})  is given in the following theorem.
\begin{Theorem}\label{Theorem_main}
Let measurable bounded function $a=(a^1,\dots,a^d):[0,\infty)\times\R\to\R$ be  such that for each $t\geq0$ and all $1\leq i \leq d,$ $a^i(t,\cdot)$ is a function of bounded variation on $\R$, i.e., for each $1\leq j\leq d$, the generalized derivative $\mu^{ij}(t,dy)=\frac{\partial a^i}{\partial y_j}(t,dy)$ is a signed measure on $\R$ . Assume that the signed measures $\nu^{ij}(dt,dy):=\mu^{ij}(t,dy)dt, \ 1\leq i,j\leq d,$ are of  the class $\mathcal{K}$. Let $\sigma:[0,\infty)\times\R\to\R\times\mathds{R}^m$ be a bounded continuous function satisfying (C1), (C2), and the following conditions
\begin{enumerate}[(C3)]
\item {\it H\"older continuity}: For each $T>0$, there exist $L>0$, $0<\alpha\leq 1$ such that for all  $t_1,t_2\in[0,T]$, $x_1,x_2\in\R$, $1\leq i\leq d$, $1\leq k\leq m$,
$$
|\sigma_k^{i}(t_1,x_1)-\sigma_k^{i}(t_2,x_2)|\leq L\left(|t_1-t_2|^{\alpha/2}+|x_1-x_2|^\alpha\right).
$$
\end{enumerate}
\begin{enumerate}[(C4)] \label{cond_C4}
\item There exists $\r>0$ such that  for all $1\leq k\leq m$, $1\leq i,j\leq d$, the function $\left|\frac{\partial \sigma_k^i}{\partial y_j}(s,y)\right|^{2+\r}$ belongs to the class $\mathcal{K}$.
\end{enumerate}

Then there exists the derivative $Y_t(x)=\nabla\varphi_t(x)$ in $L_p$-sense: for all $p>0$,  $x\in\R$, $v\in\R$, $t\geq0$,
\begin{equation}\label{eq_derivative_main2}
\mathds{E}\left|\frac{\varphi_t(x+\varepsilon v)-\varphi_t(x)}{\varepsilon}-Y_t(x)v\right|^p\to 0, \ \varepsilon\to 0.
\end{equation}

The derivative is a unique solution of the integral equation
\begin{equation}\label{eq_derivative_main}
Y_t(x)=E+\int_0^t dA_s^{\nu}(\varphi(x))Y_s(x)+\sum_{k=1}^{m}\int_0^t\nabla\sigma_{k}(s,\varphi_s(x))Y_s(x)dw_{k}(s),
\end{equation}
where $E$ is the $d\times d$-identity matrix,  $\nabla\sigma_{k}(s,y)=\left(\frac{\partial\sigma_{k}^{i}}{\partial y_j}(s,y)\right)_{1\leq i,j\leq d}$; the first integral in the right-hand side of (\ref{eq_derivative_main}) is the Lebesgue-Stieltjes  integral with respect to the continuous function of bounded variation $t\rightarrow A_t^{\nu}(\varphi(x))$.

Moreover,
\begin{equation}\label{eq_Sobolev_derivative}
P\left\{\forall t\geq0: \varphi_t(\cdot)\in W_{p,loc}^1(\R,\R), \nabla\varphi_t(x)=Y_t(x) \ \mbox{for} \ \lambda\mbox{-a.a.} \ x\right\}=1,
\end{equation}
where $\lambda$ is the Lebesgue measure on $\R$.
\end{Theorem}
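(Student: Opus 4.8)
The plan is to approximate equation (\ref{eq_main}) by equations with smooth coefficients and to pass to the limit in the classical formula (\ref{eq_deriv_new}) for the derivative of the smooth flow. I would choose smooth bounded coefficients $a_n,\sigma_n$ with $a_n\to a$, $\sigma_n\to\sigma$ in a suitable sense, keeping the ellipticity, H\"older, Sobolev and Kato-class bounds uniform in $n$; in particular I would arrange that the densities $\nu_n^{ij}(dt,dy)=\frac{\prt a_n^i}{\prt y_j}(t,y)\,dt\,dy$ converge to $\nu^{ij}$ while remaining in $\mathcal{K}$ with a uniform modulus in (\ref{Cond_A_prime}). For each $n$ the flow $\vp_t^{(n)}(x)$ is a diffeomorphism and $Y_t^{(n)}(x)=\nabla\vp_t^{(n)}(x)$ solves the linear SDE (\ref{eq_deriv_new}). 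Its drift term can be written as $\int_0^t dA_s^{(n)}(\vp^{(n)}(x))\,Y_s^{(n)}(x)$, where $A_s^{(n)}$ is the matrix-valued W-functional of the homogeneous process $(s,\vp_s^{(n)}(x))$ associated with the measure $\nu_n$, as in Example \ref{Example_continuous}.

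The first main step is to show that $Y_t^{(n)}(x)$ converges. I would first establish that $\vp_t^{(n)}(x)\to\vp_t(x)$ in probability, uniformly on compact time intervals, together with the convergence of the transition densities $G_n\to G$ (the content of Section \ref{Section_Conv_Densities}, needed to match the characteristics of the W-functionals). With this in hand, Lemma \ref{Lemma_converg_functionals} applies componentwise to the drift functionals: its assumption 3) is precisely the uniform Kato-class smallness, assumption 4) is the convergence of characteristics furnished by $G_n\to G$, and assumption 2) is the convergence of the processes. This yields $\sup_{t\leq T}|A_t^{(n)}(\vp^{(n)}(x))-A_t^{\nu}(\vp(x))|\to0$ in probability. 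Uniform $L_p$-bounds on $Y_t^{(n)}(x)$, needed to upgrade all convergences to $L_p$, come from Khas'minskii's lemma (Lemma \ref{Lemma_exp_moment}): bounding the solution of the linear equation by a Gronwall-type inequality produces exponentials of the total-variation functional $A^{|\nu_n|}$ and of the quadratic-variation functional $\int_0^t|\nabla\sigma_n|^2\,ds$, and both have uniformly bounded exponential moments because the corresponding measures lie in $\mathcal{K}$ with a uniform modulus, (C4) guaranteeing this for the quadratic-variation functional.

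The main obstacle is the convergence of the martingale term $\sum_k\int_0^t\nabla\sigma_{n,k}(s,\vp_s^{(n)}(x))\,Y_s^{(n)}(x)\,dw_k(s)$, since here, unlike the additive-noise case of \cite{Aryasova+14a}, the limiting integrand involves the merely Kato-class object $\nabla\sigma$. By the It\^o isometry the $L_2$-error splits into a term testing $\nabla\sigma_n(s,\vp_s^{(n)})-\nabla\sigma(s,\vp_s)$ against $|Y^{(n)}|^2$ and a term testing $Y_s^{(n)}-Y_s$ against $|\nabla\sigma|^2$. This is exactly where the surplus exponent $2+\r$ in (C4) is essential: H\"older's inequality separates the factor $|\nabla\sigma|^{2+\r}$, whose time-integral along the process is a W-functional with uniformly bounded exponential moments by Lemma \ref{Lemma_exp_moment}, from a power of $|Y^{(n)}|$ controlled by the uniform $L_p$-bounds, while the convergence $\int_0^t|\nabla\sigma_n(s,\vp_s^{(n)})-\nabla\sigma(s,\vp_s)|^{2+\r}\,ds\to0$ is again obtained from the convergence of W-functionals together with $\vp^{(n)}\to\vp$ through Lemma \ref{Lemma_converg_functionals}. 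Combining the drift and martingale convergences and running a Gronwall argument against the W-functional, I would conclude that $Y_t^{(n)}(x)$ converges in $L_p$ to a process $Y_t(x)$ solving (\ref{eq_derivative_main}); uniqueness for that equation follows from the same Gronwall-plus-Khas'minskii estimate applied to the difference of two solutions.

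Finally, to identify the limit with the derivative and obtain (\ref{eq_derivative_main2}) I would use that $Y_t^{(n)}(x)=\nabla\vp_t^{(n)}(x)$ exactly, write the difference quotient $\veps^{-1}(\vp_t^{(n)}(x+\veps v)-\vp_t^{(n)}(x))=\int_0^1 Y_t^{(n)}(x+s\veps v)v\,ds$, and pass to the limit first in $n$ (using the established $L_p$-convergence, uniform in the spatial argument over compacts) and then in $\veps$, invoking the $L_p$-continuity in $x$ of $Y_t(x)$. The Sobolev statement (\ref{eq_Sobolev_derivative}) then follows from this $L_p$-convergence of difference quotients by the standard characterization of $W_{p,loc}^1$: integrating the $L_p(\Omega)$-bounds against a test function on a bounded set $U$ and applying Fubini shows $\vp_t(\cdot)\in W_{p,loc}^1(\R,\R)$ with $\nabla\vp_t=Y_t$ for $\lambda$-a.a.\ $x$, almost surely and simultaneously for all $t$ after selecting a version continuous in $t$.
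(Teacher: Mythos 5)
Your overall strategy (mollify, keep the Kato bounds uniform, converge the W-functionals via Lemma \ref{Lemma_converg_functionals} and the density convergence of Section \ref{Section_Conv_Densities}, then identify the limit through the Newton--Leibniz formula) is the paper's strategy, and your uses of Khas'minskii's lemma and of H\"older's inequality with the surplus exponent $2+\rho$ from (C4) are exactly right. But there is a genuine gap at the step you describe as ``running a Gronwall argument against the W-functional'' to conclude $Y^{(n)}_t\to Y_t$. The difference $D_n=Y_{n,\cdot}-Y_{0,\cdot}$ satisfies an equation whose drift is a Stieltjes integral of $D_n$ against the random, unbounded functional $A_n$ and whose martingale part has quadratic variation of the form $\int_0^t|\nabla\sigma_0(s,\varphi_{0,s})|^2|D_n(s)|^2ds$, where $|\nabla\sigma_0|^2$ is only of Kato class, not bounded. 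A pathwise Gronwall inequality cannot absorb the stochastic integral of the unknown $D_n$; a Gronwall inequality in expectation cannot close either, because the Kato condition controls time-integrals of $|\nabla\sigma_0(s,\varphi_{0,s})|^2$ along the process, not pointwise-in-$s$ moments, and an attempt to separate the factors by H\"older changes the exponent of $\E|D_n|^p$ between the two sides so the iteration never closes in a fixed $L_p$. The paper circumvents precisely this: it introduces $Z_n$ solving $dZ_n=-Z_n\,dA_{n,t}(\varphi_n(x))$, so that $Z_nY_n-Z_0Y_0$ is a \emph{pure} stochastic integral (the bounded-variation drift is eliminated), and then applies It\^o's formula with the weight $\exp\{-K\int_0^t\sum_k|\nabla\sigma_{0,k}(s,\varphi_{0,s})|^2ds\}$, chosen so that the dangerous quadratic term (the term $II$ in the paper's proof of Lemma \ref{Lemma_Converg_Derivatives_2}) is nonpositive for large $K$; the remaining term is killed using Proposition \ref{Proposition_Kulik} (convergence of $\nabla\sigma_{n,k}(s,\varphi_{n,s}(x))$ in probability, via absolute continuity and uniform integrability of the one-dimensional densities) together with Propositions \ref{Prop_Gronwall_Lemma} and \ref{Proposition_Monot_convergence} for $Z_n$, $Z_n^{-1}$. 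Without this transformation (or an equivalent stochastic Gronwall device, which you neither cite nor prove) your convergence step fails as written.

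A second, structural omission: you apply the density convergence $G_n\to G_0$ directly to the unlocalized coefficients, but the parametrix argument of Lemma \ref{Lemma_Converg_Densities} requires $a$ to vanish outside a compact set (so that $a_n\in L_p([0,T]\times\R)$ for some $p>d+2$, which mere boundedness does not give) and $\sigma$ to be a constant nondegenerate matrix outside a ball; likewise Lemma \ref{Lemma_Converg_Character} assumes the measure has compact support. The paper therefore first proves the theorem under these structural assumptions (Subsection \ref{subsction_compact_case}) and then runs the localization of Subsection 3.2, cutting off with $\beta_R,\gamma_R$ and \emph{adding an independent noise term} $\sum_j\widetilde\sigma_j\gamma_R(\cdot)\,d\widetilde w_j$ to preserve uniform ellipticity at infinity, identifying the flows and the functionals up to the exit time $\tau_R$ and letting $R\to\infty$. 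Your proposal skips this entirely, so assumption 4) of Lemma \ref{Lemma_converg_functionals} is not actually established in the generality of the theorem. (A minor point by comparison: uniqueness for (\ref{eq_derivative_main}) the paper obtains from semimartingale theory rather than from a Gronwall--Khas'minskii estimate, though your route is plausible there.)
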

\begin{Remark} The W-functional $A_t^\nu=\left(A_t^{\nu^{ij}}\right)_{1\leq i,j\leq d}$ is well defined because the signed measure $\nu$ is of the class $\mathcal{K}$.
\end{Remark}
\begin{Remark} Recall that for all $1\leq i,j\leq d$, the mappings $A_t^{\nu^{ij,\pm}}$, which we will denote by  $A_t^{ij,\pm}$, are continuous and monotonous in $t$. So for each $T>0$, the function $t\to A_t^{ij}$ is a continuous function of bounded variation on $[0,T]$ almost surely.
\end{Remark}

\section{The proof of Theorem \ref{Theorem_main}}\label{Proof of Theorem_main}

The existence and uniqueness of solution for equation (\ref{eq_derivative_main}) follows from \cite{Protter04}, Ch. V, Theorem 7. Indeed, condition (C4) provides that for all $1\leq k\leq m$, $\int_0^t|\nabla\sigma_k(s,\varphi_s(x))|^2ds<\infty$ a.s. and consequently
$$
\int_0^t dA_s^{\nu}(\varphi(x))+\sum_{k=1}^{m}\int_0^t\nabla\sigma_{k}(s,\varphi_s(x))dw_{k}(s), t\geq 0,
$$
is a semimartingale.

It is well known that the statement of the theorem is true in the case of smooth coefficients, and the derivative satisfies equation \eqref{eq_deriv_new}. To prove the theorem in general case we approximate the initial equation by equations with smooth coefficients.

The proof is divided into two steps.
 \subsection {}\label{subsction_compact_case}  In the first step, we assume that there exists $R>0$ such that for all $t\geq0$,  $x\in\R$,   $|x|\geq R$, $a(t,x)=0$,  $\sigma(t,x)=\widetilde{\sigma}=const$, $\widetilde{\sigma}\widetilde{\sigma}^\ast>0$.

For $n\geq1,$ let $\omega_n\in C_0^{\infty}(\R)$ be a non-negative function such that
$\int_{\mathds{R}^d}\omega_n(z)dz=1$, and $\omega_n(x)=0, \ |x|\geq 1/n$.
For all $t\geq 0$, $x\in\R$, $n\geq 1$, and $1\leq k\leq m$, put
\begin{eqnarray}\label{eq_a_n}
& a_n(t,x)=(\omega_n\ast a)(t,x)=\int_{\mathds{R}^d} \omega_n(x-y)a(t,y)dy, \\
& \sigma_{n,k}(t,x)=(\omega_n\ast \sigma_k)(t,x)=\int_{\mathds{R}^d} \omega_n(x-y)\sigma_{k}(t,y)dy.
\end{eqnarray}
Note that for each $T>0$,
\begin{eqnarray}\label{eq_ineq_for_norms}
\sup_{n\geq1}\|a_n\|_{T,\infty}\leq \|a\|_{T,\infty},\\
\sup_{n\geq1}\|\sigma_{n,k}\|_{T,\infty}\leq \|\sigma_k\|_{T,\infty}, \ 1\leq k\leq m,
\end{eqnarray}
where
$$
\|a\|_{T,\infty}=\sup_{t\in[0,T]}\sup_{x\in\R}|a(t,x)|.
$$
Besides, for all $n\geq1$, $\sigma_n$ satisfies (C2), and the ellipticity constant can be chosen uniformly in $n$.
\begin{Remark}\label{Remark_uniform_Gaussian_estimates}
For all $n\geq1$ the transition probability density of the process $(\varphi_{n,t}(x))_{t\geq0}$ satisfies the inequality \eqref{eq_gaussian_estimates}. It follows from \eqref{eq_ineq_for_norms} and (C2), which holds uniformly in $n$, that the constants in \eqref{eq_gaussian_estimates} can be chosen uniformly in $n\geq 1$.
\end{Remark}
For each $T>0$, we have $a_n\to a$,  $n\to\infty$, in
$L_{1}([0,T]\times\R).$ Passing to
subsequences we may assume without loss of generality that
$a_n(t,x)\to a(t,x)$, $n\to\infty,$ for almost all $t\geq0$ and almost all $x$ w.r.t. the
Lebesgue measure.
Then for all $n\geq 1, \ t\geq0$, $x\in\R$ such that $|x|\geq R+1,$
\begin{equation*}
a_n(t,x)=0, \ \sigma_{n}(t,x)=\widetilde{\sigma}.
\end{equation*}
Without loss of generality we can  suppose that  this is true for all $x$ such that $|x|>R$.
Moreover, from (C3) we can conclude that for each $T>0$, $\sigma_{n}\to \sigma$, $n\to\infty$, uniformly in $(t,x)\in[0,T]\times\R$.

Consider the SDE
\begin{equation}\label{eq_main_n} \left\{
\begin{aligned}
d\vp_{n,t}(x)&=a_n(t,\vp_{n,t}(x))dt+\sum_{k=1}^{m}\sigma_{n,k}(t,\vp_{n,t}(x))dw_{k}(t),\\
\vp_{n,0}(x)&=x, \ x\in\mathds{R}^d.
\end{aligned}\right.
\end{equation}
For each $n\geq 1$ there exists a unique strong solution of equation (\ref{eq_main_n}).
\begin{Lemma}\label{Lemma_Converg_Solutions} {\it For each $p\geq1$,
\begin{enumerate}[1)]
\item for all $t\geq0$ and any compact set $U\in\mathds{R}^d$,
$$
\sup_{x\in U, \ n\geq
1}\left(\mathds{E}(|\vp_{n,t}(x)|^p+|\vp_t(x)|^p)\right)<\infty;
$$
\item for all   $x\in\mathds{R}^d, \ T\geq0,$
$$
\mathds{E}\left(\sup_{0\leq t\leq
T}|\vp_{n,t}(x)-\vp_t(x)|^p\right)\to 0 \ \mbox{as} \ n\to \infty.
$$
\end{enumerate} }
\end{Lemma}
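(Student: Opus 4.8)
The plan is to dispatch assertion~1) by soft moment estimates and to concentrate the effort on~2). For~1), the point is that by \eqref{eq_ineq_for_norms} the coefficients $a_n,\sigma_n$ are bounded uniformly in $n$ on each strip $[0,T]\times\R$. Writing $\vp_{n,t}(x)=x+\int_0^t a_n(s,\vp_{n,s}(x))ds+\sum_k\int_0^t\sigma_{n,k}(s,\vp_{n,s}(x))dw_k(s)$, the drift part is bounded pathwise by $T\|a\|_{T,\infty}$ and the martingale part is controlled directly by the Burkholder--Davis--Gundy inequality through the uniform bound on $\sigma$. This yields $\E\sup_{s\le t}|\vp_{n,s}(x)|^p\le C_{p,T}(1+|x|^p)$ with $C_{p,T}$ independent of $n$, and the same bound for $\vp_t(x)$ since $a,\sigma$ are bounded; taking the supremum over $x$ in a compact set gives~1).

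For assertion~2), set $\Delta_{n,t}=\vp_{n,t}(x)-\vp_t(x)$ and split every coefficient difference into a \emph{coefficient part} and a \emph{state part}, e.g. $a_n(s,\vp_{n,s})-a(s,\vp_s)=\bigl[a_n(s,\vp_{n,s})-a(s,\vp_{n,s})\bigr]+\bigl[a(s,\vp_{n,s})-a(s,\vp_s)\bigr]$, and analogously for $\sigma$. The diffusion coefficient part tends to $0$ uniformly because $\sigma_n\to\sigma$ uniformly on $[0,T]\times\R$. The drift coefficient part I would estimate with the uniform Gaussian bound of Remark~\ref{Remark_uniform_Gaussian_estimates}:
$$
\E\int_0^T|a_n-a|(s,\vp_{n,s})ds\le C\int_0^T\!\!\int_{\R}|a_n-a|(s,y)\,s^{-d/2}e^{-c|y-x|^2/s}\,dy\,ds,
$$
and splitting the $s$-integral at a small $\veps$ (the part over $[0,\veps]$ is $\le 2\veps\|a\|_{T,\infty}$, while over $[\veps,T]$ the kernel is $\le C\veps^{-d/2}$ and $\|a_n-a\|_{L_1([0,T]\times\R)}\to0$) makes this term vanish.

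The state parts carry the whole difficulty, and this is where I expect the main obstacle. Since $a$ is merely of bounded variation, $a(s,\vp_{n,s})-a(s,\vp_s)$ need not tend to $0$ even though $\Delta_{n,s}\to0$, so a direct Gronwall argument is unavailable; this is exactly the irregular-drift obstruction. To overcome it I would apply a Zvonkin--It\^o--Tanaka transformation $\Phi(t,x)=x+u(t,x)$, with $u$ (resp.\ $u_n$) solving the parabolic system driven by $a,\sigma$ (resp.\ $a_n,\sigma_n$) and a spectral shift $\lambda$ so large that $\|\nabla u\|_\infty\le\tfrac12$; this turns the irregular drift into a Lipschitz one while leaving the diffusion in the same regularity class. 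For the remaining (only H\"older/Sobolev) diffusion state part I would use the Sobolev bound $|\sigma(s,\vp_{n,s})-\sigma(s,\vp_s)|\le|\Delta_{n,s}|\int_0^1|\nabla\sigma|\bigl(s,(1-\theta)\vp_s+\theta\vp_{n,s}\bigr)d\theta$ together with Cauchy--Schwarz and the exponential integrability supplied by Khas'minskii's Lemma~\ref{Lemma_exp_moment} for the Kato-class functional built from $|\nabla\sigma|^{2+\r}$ — this is precisely the role of (C1) and (C4). Combining the transformed drift estimate with this diffusion estimate closes a Gronwall inequality for $g_n(t)=\E\sup_{s\le t}|\Delta_{n,s}|^2$; transforming back (using that $\|\nabla u_n\|_\infty,\|\nabla u\|_\infty$ are bounded and $u_n\to u$) gives the case $p=2$, and general $p$ follows from Part~1 by uniform integrability. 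A cleaner but less explicit alternative, which I would keep in reserve, is to deduce tightness of $\{\vp_{n,\cdot}(x)\}$ from the uniform bounds, identify each weak limit as a solution of the martingale problem for $(a,\sigma)$ by passing to the limit in the drift via the same density estimate, and then invoke the strong uniqueness of \cite{Veretennikov81} together with the Gy\"ongy--Krylov characterization of convergence in probability to upgrade to strong convergence.
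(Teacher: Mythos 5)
Your assertion 1) and the coefficient parts of 2) are fine and agree with the paper where they overlap: the paper proves 1) exactly by the uniform boundedness of $a_n,\sigma_n$, while for 2) it gives no self-contained argument at all, simply invoking \cite{Luo11}, Theorem 3.4 --- so any direct proof is necessarily a different route. Your estimate of the drift coefficient part via the uniform Gaussian bound of Remark \ref{Remark_uniform_Gaussian_estimates} and the $L_1$-convergence $a_n\to a$ is correct, as is the uniform-convergence argument for the diffusion coefficient part.

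The genuine gap is in your treatment of the diffusion \emph{state} part, which is precisely where the difficulty sits. First, the bound $|\sigma(s,y_1)-\sigma(s,y_2)|\le|y_1-y_2|\int_0^1|\nabla\sigma|\bigl(s,(1-\theta)y_1+\theta y_2\bigr)d\theta$ holds, for $\sigma(s,\cdot)$ merely in $W^{0,1}_{2d+2,loc}$, only along almost every segment, and you apply it along the random segment joining $\vp_s(x)$ and $\vp_{n,s}(x)$ without justification. More seriously, Khas'minskii's Lemma \ref{Lemma_exp_moment} is a statement about W-functionals of the homogeneous Markov family $(\eta_t)_{t\ge0}$ attached to $\vp$ or $\vp_n$, whose transition densities obey the two-sided Gaussian bounds \eqref{eq_gaussian_estimates} uniformly in $n$; the interpolated process $(1-\theta)\vp_s(x)+\theta\vp_{n,s}(x)$ is none of these: it is not Markov, and its diffusion matrix $(1-\theta)\sigma(s,\vp_s)+\theta\sigma_n(s,\vp_{n,s})$, a convex combination of two matrices evaluated at two \emph{different} random points, need not be uniformly elliptic (a convex combination of nondegenerate square matrices can be singular). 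Hence neither (C4) nor Lemma \ref{Lemma_exp_moment} delivers the exponential moment of $\int_0^t|\nabla\sigma|^2\bigl(s,(1-\theta)\vp_s+\theta\vp_{n,s}\bigr)ds$ that your Gronwall closure requires; the standard repair via the maximal-function inequality $|\sigma(y_1)-\sigma(y_2)|\le C|y_1-y_2|\bigl(M|\nabla\sigma|(y_1)+M|\nabla\sigma|(y_2)\bigr)$ does bring you back to functionals of $\vp$ and $\vp_n$ separately, but then you need a Kato-class condition for $(M|\nabla\sigma|)^2$, which (C4) does not provide. (Your Zvonkin step also asserts $\nabla u_n\to\nabla u$ without proof; that is standard parabolic $L_p$-stability and only a citation gap.) By contrast, your reserve plan --- tightness from the uniform bounds, identification of weak limit points via the same density estimates, pathwise uniqueness from \cite{Veretennikov81}, and the Gy\"ongy--Krylov criterion --- is sound as sketched and is essentially how stability results of the type of \cite{Luo11}, Theorem 3.4 are proved; leading with it, and then upgrading to $L_p$-convergence of the supremum by the uniform moment bounds of your part 1), would give a complete proof.
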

\begin{proof}
The first statements follows from the uniform boundedness of the coefficients, the second one is a consequence of
\cite{Luo11}, Theorem 3.4.
\end{proof}

For $n\geq 1$, put $\nabla a
_n=\left(\frac{\partial a_n^i}{\partial x_j}\right)_{1\leq i,j \leq d}$, $\nabla \sigma_{n,k}=\left(\frac{\partial \sigma_{n,k}^i}{\partial x_j}\right)_{1\leq i,j \leq d}$. Denote by $Y_{n,t}(x)$ the
matrix of derivatives of $\varphi_{n,t}(x)$ in $x$, i.e.,
$Y_{n,t}^{ij}(x)=\frac{\partial\varphi_{n,t}^i(x)}{\partial x_j}$, $1\leq i,j\leq d$.
Then $Y_{n,t}(x)$ satisfies the equation
\begin{equation}\label{eq_derivative}
Y_{n,t}(x)=E+\int_0^t\nabla a_n(s,\vp_{n,s}(x))Y_{n,s}(x)ds+\sum_{k=1}^{m}\int_0^t\nabla \sigma_{n,k}(s,\vp_{n,s}(x))Y_{n,s}(x)dw_{k}(s),
\end{equation}
where $E$ is the $d$-dimensional identity matrix.

By the properties of convolution of a
generalized function (see \cite{Vladimirov67}, Ch. 2, \S7),
\begin{equation}\label{eq_convolution}
\nabla a_n=\nabla a\ast \omega_n=a\ast \nabla \omega_n, \ n\geq 1.
\end{equation}
Note that for all $n\geq 1$, $1\leq i,j\leq d$, $\nabla a_n^{ij}$ is a bounded measurable function on $[0,\infty)\times\R$.
Then (see Example 1) there exists a continuous homogeneous additive functional
$$
A_{n,t}^{ij}(\varphi_n(x))=\int_0^t\frac{\partial a_n^i}{\partial y_j}(s,\varphi_{n,s}(x))ds
$$
corresponding to the signed measure $\nabla a_n^{ij}(s,y)dsdy$.

Denote $\mu_n^{ij}(t,y)dy=\frac{\partial a_n^i}{\partial y_j}(t,y)dy$. For each $n\geq1$, $1\leq i,j \leq d,$ put $\mu_n^{ij,\pm}=\mu^{ij,\pm}\ast\omega_n$  (recall that $\mu^{ij}(t,dy)=\frac{\partial a^i}{\partial y_j}(t,dy)$). Then $\mu_n^{ij}=\mu_n^{ij,+}-\mu_n^{ij,-}$. It can be easily seen that the measures $\nu_n^{ij,\pm}(dt,dy)=\mu_n^{ij,\pm}(t,dy)dt$, $n\geq 1$, are of the class $\mathcal{K}$.  By Remark \ref{Remark_uniform_Gaussian_estimates}, for each $x\in\R$ there exist W-functionals $A_{t}^{\nu_n^{ij,\pm}}(\varphi_{n,\cdot}(x))$, which we will denote by $A_{t}^{ij,\pm}(\varphi_{n}(x))$.
Generally speaking, $\mu_n^{ij,\pm}\neq (\mu^{ij}\ast\omega_n)^\pm$ but, by Remark \ref{Remark_Hahn_decomp},
$$
A_{n,t}^{ij}(\varphi_n(x))=A_{t}^{\nu_n^{ij}}(\varphi_{n,\cdot}(x))=A_{t}^{ij,+}(\varphi_{n}(x))-A_{t}^{ij,-}(\varphi_{n}(x)).
$$

Denote $\varphi_{0,t}(x)=\varphi_t(x)$, $Y_{0,t}(x)=Y_t(x)$, $a_0=a$, $\sigma_0=\sigma$, $A_{0,t}=A_t$.
\begin{Lemma}\label{Proposition_moments_A}
For all $t\geq0$, $p>0,$ $1\leq i,j \leq d,$ there exists a constant $C$ such that
\begin{equation}\label{eq_exp_Moment_W-functional}
\sup_{ n\geq0}\sup_{x\in\R}\E\exp\left\{pA_{n,t}^{ij,\pm}(\varphi_n(x))\right\}<C.
\end{equation}
\end{Lemma}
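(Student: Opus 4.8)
The plan is to read off the uniform exponential estimate from Khas'minskii's lemma (Lemma \ref{Lemma_exp_moment}), so that the whole problem collapses to one uniform statement about characteristics. Recall that $A_{n,t}^{ij,\pm}(\varphi_n(x))$ is the $W$-functional of the space--time process associated with $\varphi_n$ corresponding to $\nu_n^{ij,\pm}(ds,dy)=\mu_n^{ij,\pm}(s,dy)\,ds$, and its characteristic is
\[
f_{n,t}^{ij,\pm}(t_0,x_0)=\int_{t_0}^{t_0+t}\int_{\R}G_n(t_0,x_0,s,y)\,\mu_n^{ij,\pm}(s,dy)\,ds,
\]
where $G_n$ is the transition density of the diffusion with coefficients $a_n,\sigma_n$. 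The constant produced by Lemma \ref{Lemma_exp_moment} depends only on $p$, $t$, and on the \emph{rate} at which $\sup_{t_0,x_0}f_{n,t}^{ij,\pm}$ tends to $0$ as $t\downarrow0$. Hence it suffices to exhibit a single modulus $g(t)\downarrow0$ with $\sup_{n\ge0}\sup_{t_0,x_0}f_{n,t}^{ij,\pm}(t_0,x_0)\le g(t)$: then one constant $C$ serves every $n$, and the supremum in the statement (taken over $x=x_0$, $t_0=0$) is dominated by the full supremum that Khas'minskii's lemma controls.

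First I would eliminate the $n$-dependence hidden in $G_n$. By Remark \ref{Remark_uniform_Gaussian_estimates} the densities $G_n$ satisfy the upper bound in \eqref{eq_gaussian_estimates} with constants $C_2,c_2$ \emph{independent of $n$}; writing $q(t_0,x_0,s,y)=\frac{C_2}{(s-t_0)^{d/2}}\exp\{-c_2|y-x_0|^2/(s-t_0)\}$ and
\[
N(\lambda,t)=\sup_{t_0\in[0,\infty),\,x_0\in\R}\int_{t_0}^{t_0+t}\int_{\R}q(t_0,x_0,s,y)\,\lambda(s,dy)\,ds,
\]
we have $\sup_{t_0,x_0}f_{n,t}^{ij,\pm}\le N(\mu_n^{ij,\pm},t)$, and by the equivalence between \eqref{Cond_A} and membership in $\mathcal{K}$ noted after \eqref{eq_gaussian_estimates} it is enough to bound $N(\mu_n^{ij,\pm},t)$ uniformly in $n$.

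The key step is that spatial mollification does not increase $N$, i.e. $N(\mu_n^{ij,\pm},t)\le N(\mu^{ij,\pm},t)$ for every $n$. Since $\mu_n^{ij,\pm}=\mu^{ij,\pm}\ast\omega_n$ is a convolution in the space variable only, I would transfer the mollifier onto the kernel. Using $q(t_0,x_0,s,z+w)=q(t_0,x_0-w,s,z)$ (translation invariance in the spatial argument) and Fubini,
\[
\int_{\R}q(t_0,x_0,s,y)\,(\mu^{ij,\pm}(s,\cdot)\ast\omega_n)(y)\,dy=\int_{\R}\omega_n(w)\Big(\int_{\R}q(t_0,x_0-w,s,z)\,\mu^{ij,\pm}(s,dz)\Big)dw .
\]
Integrating in $s$ over $[t_0,t_0+t]$ and taking the supremum over $x_0$ \emph{inside}, together with $\int\omega_n=1$, yields $N(\mu_n^{ij,\pm},t)\le N(\mu^{ij,\pm},t)$. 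This is precisely the point where a naive estimate fails: the pointwise mollified Gaussian density blows up like $\|\omega_n\|_\infty$ at small times, so the kernel cannot be majorized pointwise uniformly in $n$; absorbing the shift $w$ into the supremum over the starting center is what makes the bound uniform. I expect this translation-invariance/Fubini manoeuvre, and the verification that Khas'minskii's constant is governed only by the convergence rate, to be the main obstacles.

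Finally I would conclude. Since $\nu^{ij}$ is of class $\mathcal{K}$, its variation $|\nu^{ij}|=\nu^{ij,+}+\nu^{ij,-}$ is of class $\mathcal{K}$, and by monotonicity of the defining limit so are $\nu^{ij,\pm}$; hence $N(\mu^{ij,\pm},t)\to0$ as $t\downarrow0$. Combining this with the previous step gives $\sup_{n\ge0}N(\mu_n^{ij,\pm},t)\le N(\mu^{ij,\pm},t)=:g(t)\downarrow0$ (the case $n=0$ being included, as $\mu_0=\mu$), which is exactly the uniform modulus required above. Lemma \ref{Lemma_exp_moment} then produces a single $C$ with $\sup_{n\ge0}\sup_{x\in\R}\E\exp\{pA_{n,t}^{ij,\pm}(\varphi_n(x))\}<C$, completing the proof.
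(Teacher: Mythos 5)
Your proposal is correct and takes essentially the same route as the paper: the paper's proof of this lemma is literally the combination of Khas'minskii's bound (Lemma \ref{Lemma_exp_moment}), whose constant depends only on $p$, $t$, and the rate of convergence in \eqref{Cond_A}, with the $n$-uniform Gaussian estimates of Remark \ref{Remark_uniform_Gaussian_estimates}. Your translation-invariance/Fubini step giving $N(\mu_n^{ij,\pm},t)\le N(\mu^{ij,\pm},t)$ merely spells out the uniform-in-$n$ modulus that the paper leaves implicit in its remark that the mollified measures $\nu_n^{ij,\pm}$ are ``easily seen'' to be of the class $\mathcal{K}$.
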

\begin{proof}
The statement of lemma follows from Lemma \ref{Lemma_exp_moment} and Remark \ref{Remark_uniform_Gaussian_estimates}.
\end{proof}

\begin{Lemma}\label{Lemma_Converg_Derivatives}
For all $T\geq0$, $x\in\R$, $p>0$,
$$
\sup_{n\geq 0}\mathds{E}\sup_{0\leq t\leq T}|Y_{n,t}(x)|^p<\infty.
$$
\end{Lemma}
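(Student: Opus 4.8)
The plan is to treat all $n\geq0$ by a single argument applied to the linear matrix equation \eqref{eq_derivative} (and, for $n=0$, to \eqref{eq_derivative_main}), reducing the moment bound to the uniform exponential estimates already in hand. The crucial observation is that $\nabla a_n$ and $|\nabla\sigma_{n,k}|^2$ are \emph{not} bounded uniformly in $n$, so a naive Gronwall estimate is useless; instead both must be absorbed into W-functionals of class $\mathcal{K}$ whose exponential moments are uniform in $n$ by Lemma~\ref{Proposition_moments_A} and Lemma~\ref{Lemma_exp_moment}. Concretely, I would set $Z_t=|Y_{n,t}(x)|^2$ (Hilbert--Schmidt norm; the fundamental matrix of a linear SDE started at $E$ stays invertible, so $Z_t>0$, and if one prefers one applies It\^o to $\log(\delta+Z_t)$ and lets $\delta\downarrow0$) and apply It\^o's formula to $\log Z_t$. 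With $\langle\cdot,\cdot\rangle$ the trace inner product, a direct computation yields
\[
\log Z_t=\int_0^t\frac{2\langle Y_s,\nabla a_n Y_s\rangle+\sum_{k}|\nabla\sigma_{n,k}Y_s|^2}{Z_s}\,ds+N_t-\tfrac12\langle N\rangle_t,
\]
where $N_t=\sum_{k}\int_0^t \frac{2\langle Y_s,\nabla\sigma_{n,k}(s,\varphi_{n,s})Y_s\rangle}{Z_s}\,dw_k(s)$ and all coefficients are evaluated at $(s,\varphi_{n,s}(x))$.

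Using the operator-norm bounds $\langle Y,\nabla a_n Y\rangle\leq|\nabla a_n|\,|Y|^2$ and $|\nabla\sigma_{n,k}Y|^2\leq|\nabla\sigma_{n,k}|^2|Y|^2$, the drift integrand is dominated by $2|\nabla a_n|+\sum_{k}|\nabla\sigma_{n,k}|^2$, while $\langle N\rangle_t\leq 4\sum_{k}\int_0^t|\nabla\sigma_{n,k}|^2\,ds$. Writing $\Phi_t=\int_0^t\big(2|\nabla a_n|+\sum_{k}|\nabla\sigma_{n,k}|^2\big)(s,\varphi_{n,s})\,ds$ and discarding $-\tfrac12\langle N\rangle_t\leq0$, one gets $\sup_{t\leq T}\log Z_t\leq\Phi_T+\sup_{t\leq T}N_t$, hence
\[
\sup_{t\leq T}|Y_{n,t}(x)|^p\leq\exp\big\{\tfrac p2\Phi_T\big\}\,\exp\big\{\tfrac p2\sup_{t\leq T}N_t\big\}.
\]
For $n=0$ the same computation applies verbatim, with the drift integral replaced by the Lebesgue--Stieltjes integral against the continuous functions of bounded variation $A_t^{ij,\pm}$, so that $\Phi_t$ becomes $2\sum_{ij}(A_t^{ij,+}+A_t^{ij,-})+\sum_{k}\int_0^t|\nabla\sigma_k(s,\varphi_s)|^2\,ds$.

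It remains to bound the expectation of the right-hand side uniformly in $n$, which I would do by Hölder's inequality, treating the two factors separately; note the argument works directly for every $p>0$. For the factor $e^{c\Phi_T}$: one has $\int_0^T|\nabla a_n^{ij}|\,ds\leq A_{n,T}^{ij,+}+A_{n,T}^{ij,-}$, and since $x^2\leq1+x^{2+\rho}$, $\int_0^T|\nabla\sigma_{n,k}|^2\,ds\leq T+\int_0^T|\nabla\sigma_{n,k}|^{2+\rho}\,ds$; by (C4) together with Jensen's inequality for the convolution $\nabla\sigma_{n,k}=\nabla\sigma_k\ast\omega_n$, the measures $|\nabla\sigma_{n,k}|^{2+\rho}\,ds\,dy$ are of class $\mathcal{K}$ uniformly in $n$, so $\E\exp\{c\Phi_T\}$ is finite and bounded uniformly in $n$ for every $c>0$ by Lemma~\ref{Proposition_moments_A} and Lemma~\ref{Lemma_exp_moment}. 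For the martingale factor I would write, with $\mathcal{E}$ the Doléans--Dade exponential and $V:=\langle N\rangle_T$, $\exp\{cN_t\}=\mathcal{E}(cN)_t\exp\{\tfrac{c^2}2\langle N\rangle_t\}$, so that $\sup_t\exp\{cN_t\}\leq\exp\{\tfrac{c^2}2V\}\sup_t\mathcal{E}(cN)_t$. Since $V$ has uniform exponential moments of all orders (same Kato-class bound as above), Novikov's criterion makes $\mathcal{E}(cN)$ a true martingale for every $c$, Doob's $L^2$ inequality gives $\E\sup_t\mathcal{E}(cN)_t^2\leq4\,\E\,\mathcal{E}(cN)_T^2\leq4\,\E e^{2cN_T}$, and the terminal exponential moment is controlled by that of $V$ through the one-step bound $\E e^{aN_T}\leq\big(\E\exp\{\tfrac{\lambda^2a^2}{2(\lambda-1)}V\}\big)^{(\lambda-1)/\lambda}$ for any $\lambda>1$. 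Assembling these via Cauchy--Schwarz yields the desired uniform bound.

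The main obstacle is exactly the absence of uniform-in-$n$ bounds on $\nabla a_n$ and $|\nabla\sigma_{n,k}|^2$: the entire estimate hinges on replacing these coefficients by class-$\mathcal{K}$ W-functionals and invoking the uniform exponential moments of Lemma~\ref{Proposition_moments_A}. The secondary technical point is the control of the running maximum of the stochastic exponential $\mathcal{E}(cN)$, handled above by the Novikov plus Doob argument rather than by any pathwise estimate.
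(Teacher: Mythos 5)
Your proof is correct, and while it rests on exactly the same key insight as the paper --- the unbounded coefficients $\nabla a_n$ and $|\nabla\sigma_{n,k}|^2$ must be absorbed into class-$\mathcal{K}$ W-functionals whose exponential moments are uniform in $n$ via Lemma~\ref{Proposition_moments_A} and the Khas'minskii-type Lemma~\ref{Lemma_exp_moment}, with the $x^2\leq 1+x^{2+\rho}$ trick and condition (C4) handling the diffusion gradients --- the martingale mechanics are genuinely different. The paper applies It\^o's formula to the weighted quantity $e^{h_n(t,C,l)}|Y_{n,t}(x)|^{2l}$ with $h_n(t,C,l)=-2l\Var A_{n,t}-C\sum_k\int_0^t|\nabla\sigma_{n,k}|^2ds$, chooses $C$ large so the drift and quadratic-variation terms are dominated by the negative weight (after localizing by the stopping times $\tau_n^N$), obtains a supermartingale-type bound, controls the running maximum by Doob's $L^2$ inequality applied to the martingale part, and then removes the weight by a H\"older step with exponent $1+\rho/2$ calibrated to (C4). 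You instead apply It\^o to $\log|Y_{n,t}(x)|^2$, which linearizes the problem pathwise: the drift is dominated directly by the additive functional $\Phi_T$, and after exponentiating, the running maximum of $e^{cN_t}$ is tamed through the Dol\'eans--Dade factorization plus Novikov and Doob. Your route buys a cleaner argument valid for all $p>0$ at once, with no cancellation constant $C$ or stopping-time bookkeeping beyond the positivity localization; its price is the extra prerequisite $Z_t=|Y_{n,t}(x)|^2>0$, i.e.\ invertibility of the fundamental matrix --- true here, and in fact used by the paper itself in the proof of Lemma~\ref{Lemma_Converg_Derivatives_2}, but worth justifying for $n=0$, where the drift is only a Stieltjes integral against the continuous BV functional $A_t^{ij,\pm}$; your $\log(\delta+Z_t)$ regularization (or the exit time from $\{Z>\delta\}$) handles this adequately. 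Two minor points: your displayed It\^o formula omits the initial value $\log Z_0=\log d$, which merely inserts the harmless constant $d^{p/2}$ in the final bound; and your Jensen-for-convolutions remark that $|\nabla\sigma_{n,k}|^{2+\rho}\leq|\nabla\sigma_k|^{2+\rho}\ast\omega_n$ is a nice explicit justification that the Kato-class rate in \eqref{Cond_A} is uniform in $n$ --- the uniformity that Lemma~\ref{Lemma_exp_moment} silently requires and that the paper secures through Remark~\ref{Remark_uniform_Gaussian_estimates}.
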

\begin{proof} For all $t>0$, $n\geq 0$, define the variation of $A_{n,\cdot}^{ij}$ on $[0,t]$ by
$$
\Var A_{n,t}^{ij}(\varphi(x)):=A_{n,t}^{ij,+}(\varphi(x))+A_{n,t}^{ij,-}(\varphi(x)),
$$
and denote
$$
\Var A_{n,t}(\varphi(x)):=\Sigma_{1\leq i,j \leq d}\Var A_{n,t}^{ij}(\varphi(x)).
$$
Set
$$
\tau_n^N=\inf\left\{t\geq0:\int_0^t\sum_{k=1}^m|\nabla\sigma_{n,k}(s,\varphi_{n,s}(x))|^2ds+\Var A_{n,s}(\varphi(x))+|Y_{n,s}(x)|^2\geq N\right\}.
$$
For the sake of brevity, denote
$$
h_n(t,C,l)=-2l\Var A_{n,t}(\varphi(x))-C\sum_{k=1}^m\int_0^t|\nabla\sigma_k(s,\varphi_s(x))|^2ds.
$$
By Ito's formula, for all $n\geq0$, $l\in\mathds N$,
\begin{multline}\label{eq_moment_1}
e^{h_n(t\wedge\tau_n^N,C,l)}|Y_{n,t\wedge\tau_n^N}(x)|^{2l}=
|Y_{n,0}(x)|^{2l}-
2l\int_0^{t\wedge\tau_n^N}e^{h_n(s,C,l)}|Y_{n,s}(x)|^{2l}d\Var A_{n,s}(\varphi(x))-\\
C\int_0^{t\wedge\tau_n^N}e^{h_n(s,C,l)}|Y_{n,s}(x)|^{2l}\sum_{k=1}^m|
\nabla\sigma_{n,k}(s,\varphi_s(x))|^2ds+\\
2l\int_0^{t\wedge\tau_n^N}e^{h_n(s,C,l)}|Y_{n,s}(x)|^{2l-2}\sum_{i,j=1}^d Y_{n,s}^{ij}(x)\sum_{r=1}^d dA_{n,s}^{ir}(\varphi(x))Y_{n,s}^{rj}(x)+\\
2l\int_0^{t\wedge\tau_n^N}e^{h_n(s,C,l)}|Y_{n,s}(x)|^{2l-2}\sum_{i,j=1}^dY_{n,s}^{ij}(x)\sum_{k=1}^m
\sum_{r=1}^d\nabla\sigma_{n,k}^{ir}(s,\varphi_s(x))Y_{n,s}^{rj}(x)dw_k(s)+\\
2l\int_0^{t\wedge\tau_n^N}e^{h_n(s,C,l)}|Y_{n,s}(x)|^{2l-4}\Big(\sum_{i,j=1}^d\sum_{v,q=1}^d\big(2(l-1)Y_{n,s}^{ij}(x)Y_{n,s}^{vq}(x)
+|Y_{n,s}(x)|^2\delta_{vi}\delta_{qj}\big)\times\\
\sum_{k=1}^m\sum_{r=1}^d\nabla\sigma_{n,k}^{vr}(s,\varphi_{n,s}(x))Y_{n,s}^{rq}(x)\sum_{e=1}^d\nabla\sigma_{n,k}^{ie}(s,\varphi_{n,s}(x))Y_{n,s}^{ej}(x)\Big)ds.
\end{multline}
Here $|\cdot|$ is the Hilbert-Schmidt norm. Note that  the absolute value of the third integral in the right-hand side of (\ref{eq_moment_1}) is less than or equal to that  of the first one. There exists a constant $\widetilde C=\widetilde C(d)>0$ such that the last integral does not exceed
$$
2l\widetilde C\int_0^{t\wedge\tau_n^N}e^{h_n(s,C,l)}|Y_{n,s}(x)|^{2l}\sum_{k=1}^m|\nabla\sigma_{n,k}(s,\varphi_{n,s}(x))|^2ds.
$$
Then we can choose $C>0$ so large that the absolute value of the last integral is less than or equal to  the second integral. We obtain
\begin{equation}\label{eq_h_and_martingale}
e^{h_n(t\wedge\tau_n^N,C,l)}|Y_{n,t\wedge\tau_n^N}(x)|^{2l}\leq|Y_{n,0}(x)|^{2l}+M(t\wedge\tau_n^N),
\end{equation}
where
\begin{multline*}
M(t\wedge\tau_n^N)=\\
2l\int_0^{t\wedge\tau_n^N}e^{h_n(s,C,l)}|Y_{n,s}(x)|^{2l-2}\sum_{i,j=1}^dY_{n,s}^{ij}(x)\sum_{k=1}^m
\sum_{r=1}^d\nabla\sigma_{n,k}^{ir}(s,\varphi_{n,s}(x))Y_{n,s}^{rj}(x)dw_k(s), \\ t\geq 0,
\end{multline*}
is a square integrable martingale.
Then, for all $t\geq 0$,
$$
\E e^{h_n(t\wedge\tau_n^N,C,l)}|Y_{n,t\wedge\tau_n^N}(x)|^{2l}\leq K,
$$
where $K=|Y_{n,0}(x)|^{2l}=|E|^{2l}=d^l$. Passing to the limit as $N\to\infty$, we get that for all $T>0$ there exists $C=C(l,d)$ such that
\begin{equation}\label{eq_finit_expect}
\sup_{n\geq0}\sup_{t\in[0,T]}\E e^{h_n(t,C,l)}|Y_{n,t}(x)|^{2l}\leq K.
\end{equation}
By \eqref{eq_h_and_martingale}, for all $T>0$,
\begin{multline*}
\E\sup_{t\in[0,T]}e^{2h_n(t,C,l)}|Y_{n,t}(x)|^{4l}\leq2\E\sup_{t\in[0,T]}\left(|Y_{n,0}(x)|^{4l}+M^2(t)\right)\leq\\
K^\prime\left(1+\sum_{k=1}^m\E\int_0^Te^{2h_n(s,C,l)}|Y_{n,s}(x)|^{4l}|\nabla\sigma_{n,k}(s,\varphi_{n,s}(x))|^2ds\right).
\end{multline*}
Making use of H\"older's inequality with $p=1+\frac{\rho}{2}$, we get
\begin{multline}\label{eq_Holder_h}
\E\sup_{t\in[0,T]}e^{2h_n(t,C,l)}|Y_{n,t}(x)|^{4l}\leq
K^{\prime}\left[1+\left(\E\int_0^T\left(e^{2h_n(s,C,l)}|Y_{n,s}(x)|^{4l}\right)^{\frac{2+\rho}{\rho}}ds\right)^{\frac{\rho}{2+\rho}}\right.\times\\
\left.\sum_{k=1}^m\left(\E\int_0^T|\nabla\sigma_{n,k}(s,\varphi_{n,s}(x))|^{2+\rho}ds\right)^{\frac{2+\rho}{2}}\right].
\end{multline}

Since for all $1\leq k\leq m$, $1\leq i,j\leq d$, the function $\left|\frac{\partial \sigma_k^i}{\partial y_j}(s,y)\right|^{2+\r}$ is of  the class $\mathcal{K}$, the functions $\left|\frac{\partial \sigma_{n,k}^i}{\partial y_j}(s,y)\right|^{2+\r}$, $n\geq 1$, are of the class $\mathcal{K}$ too. It follows from Lemma \ref{Proposition_moments_A} that for each $T>0$,
\begin{equation}\label{eq_gnom_2}
\sup_{n\geq 1}\E \exp\left\{\int_0^T|\nabla\sigma_{n,k}(s,\varphi_{n,s}(x))|^{2+\rho}\right\}ds<C(T),
\end{equation}
where $C(T)$ is a constant which depends on $T$.
Consequently,
\begin{equation}\label{eq_333}
\sup_{n\geq0}\E\int_0^T|\nabla\sigma_{n,k}(s,\varphi_{n,s}(x))|^{2+\rho}ds<\infty.
\end{equation}
By (\ref{eq_finit_expect}) we have
\begin{equation}\label{eq_gnom}
\sup_{n\geq0}\E\int_0^T\left(e^{2h_n(s,C,l)}|Y_{n,s}(x)|^{4l}\right)^{\frac{2+\rho}{\rho}}ds<\infty.
\end{equation}
From \eqref{eq_333} and \eqref{eq_gnom} we get
\begin{equation}\label{eq_lll}
\sup_{n\geq0}\E\sup_{t\in[0,T]}e^{2h_n(t,C,l)}|Y_{n,t}(x)|^{4l}<\infty.
\end{equation}
Finally, for any $T>0$, by the H\"older inequality,
\begin{multline*}
\sup_{n\geq0 }\E{\sup_{t\in[0,T]}}|Y_{n,s}(x)|^{2l}=\sup_{n\geq0 }\E{\sup_{t\in[0,T]}}\left[\left(e^{h_n(t,C,l)}|Y_{n,s}(x)|^{2l}\right)e^{-h_n(t,C,l)}\right]\leq\\
\sup_{n\geq0 }\left[\left(\E\sup_{t\in[0,T]}e^{2h_n(t,C,l)}|Y_{n,t}(x)|^{4l}\right)^{1/2}\times\right.\\  \left.
\left(\E \exp\left\{4l\Var A_{n,T}(\varphi_n(x))+2C\sum_{k=1}^m\int_0^T|\nabla\sigma_{n,k}(s,\varphi_{n,s}(x))|^2ds\right\}\right)^{1/2}\right].
\end{multline*}
Now the assertion of the lemma follows from \eqref{eq_333}, (\ref{eq_lll}), and the fact that for each $T>0$, $\sup_{n\geq0}\Var A_{n,T}(\varphi_n(x))<\infty$, which is a consequence of Lemma \ref{Proposition_moments_A}.
\end{proof}
\begin{Lemma}\label{Lemma_converg_W_functionals}
For each $T>0$, $x\in\R$, $1\leq i,j\leq d$,
$$
\sup_{0\leq t\leq T}|A_{n,t}^{ij,\pm}(\varphi_n(x))-A_{t}^{ij,\pm}(\varphi(x))|\to 0, \ n\to\infty, \ \mbox{in probability} \ \P.
$$
\end{Lemma}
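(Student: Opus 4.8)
The plan is to deduce the statement from Lemma~\ref{Lemma_converg_functionals}, applied on the common phase space $E=[0,\infty)\times\R$ to the homogeneous space-time processes $\xi_{n,\cdot}=(t,\varphi_{n,t}(x))_{t\ge0}$ for $n\ge1$ and $\xi_{0,\cdot}=(t,\varphi_t(x))_{t\ge0}$, all carried by the same Wiener process. The functionals are $A_{n,t}^{ij,\pm}$, with characteristics
\[
f_{n,t}^{ij,\pm}(t_0,x_0)=\int_{t_0}^{t_0+t}\int_{\R}G_n(t_0,x_0,s,y)\,\mu_n^{ij,\pm}(s,dy)\,ds,
\]
where $G_n$ denotes the transition density of $\varphi_{n,\cdot}$ (and $G_0=G$, $\mu_0^{ij,\pm}=\mu^{ij,\pm}$). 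It suffices to argue for each sign and each pair $(i,j)$ separately, since $\mu^{ij,\pm}$ and $\mu_n^{ij,\pm}$ are non-negative measures of the class $\mathcal K$ and the associated functionals are non-negative W-functionals. Thus the whole proof reduces to checking the four hypotheses of Lemma~\ref{Lemma_converg_functionals}.

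Hypothesis~2) is immediate from Lemma~\ref{Lemma_Converg_Solutions}: the $L_p$-convergence $\varphi_{n,t}(x)\to\varphi_t(x)$ gives $\xi_{n,t}\to\xi_{0,t}$ in probability, the first coordinate being deterministic. For hypothesis~3) I would use the uniform Gaussian estimate of Remark~\ref{Remark_uniform_Gaussian_estimates}: bounding $G_n$ from above by a Gaussian kernel reduces $\|f_{n,\delta}^{ij,\pm}\|_E$ to the Kato-type quantity \eqref{Cond_A_prime} for $\mu_n^{ij,\pm}$, which tends to $0$ as $\delta\downarrow0$ uniformly in $n$ because convolution with $\omega_n$ preserves the class-$\mathcal K$ bound of $\mu^{ij,\pm}$. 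Hypothesis~1), continuity of $f_{0,t}^{ij,\pm}$ in $(t_0,x_0)$, I would get from the joint continuity of $G(t_0,x_0,s,y)$ in the initial data for $s>t_0$ together with the Gaussian bounds, which supply a dominating function and let me peel off the singular layer $s\in[t_0,t_0+\delta]$ by the class-$\mathcal K$ smallness before passing to the limit.

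The heart of the matter, and the main obstacle, is hypothesis~4): the uniform convergence $\|f_{n,t}^{ij,\pm}-f_{0,t}^{ij,\pm}\|_E\to0$. I would split the difference as $J_1+J_2$, where
\[
J_1=\int_{t_0}^{t_0+t}\int_{\R}\bigl(G_n-G\bigr)(t_0,x_0,s,y)\,\mu_n^{ij,\pm}(s,dy)\,ds
\]
isolates the convergence of the densities, and
\[
J_2=\int_{t_0}^{t_0+t}\int_{\R}G(t_0,x_0,s,y)\,\bigl(\mu_n^{ij,\pm}-\mu^{ij,\pm}\bigr)(s,dy)\,ds
\]
isolates the mollification of the measures. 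For $J_1$ I would invoke the convergence $G_n\to G$ of the transition densities proved in Section~\ref{Section_Conv_Densities}, controlling the integration against $\mu_n^{ij,\pm}$ by the uniform class-$\mathcal K$ bound. For $J_2$, using $\mu_n^{ij,\pm}=\mu^{ij,\pm}\ast\omega_n$ I would move the convolution onto the kernel,
\[
J_2=\int_{t_0}^{t_0+t}\int_{\R}\Bigl((G(t_0,x_0,s,\cdot)\ast\check\omega_n)(y)-G(t_0,x_0,s,y)\Bigr)\mu^{ij,\pm}(s,dy)\,ds,
\]
and exploit the Hölder continuity of $y\mapsto G(t_0,x_0,s,y)$ to make $G\ast\check\omega_n\to G$.

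The delicate point in both $J_1$ and $J_2$ is uniformity over all $(t_0,x_0)$ in the presence of the Gaussian singularity as $s\downarrow t_0$, where the kernels concentrate and their modulus of continuity in $y$ blows up. I expect to handle this by cutting the $s$-integral at $s=t_0+\delta$: the contribution of $[t_0,t_0+\delta]$ is made small, uniformly in $n$ and in $(t_0,x_0)$, by the class-$\mathcal K$ estimate \eqref{Cond_A_prime}, while on the remaining range the kernels are bounded and equicontinuous, so $J_1$ is killed by the density convergence and $J_2$ by the mollifier property. Once conditions 1)--4) are verified, Lemma~\ref{Lemma_converg_functionals} delivers the asserted uniform convergence in probability.
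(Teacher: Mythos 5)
Your proposal follows essentially the same route as the paper: it applies Lemma~\ref{Lemma_converg_functionals} to the space-time processes $\eta_{n,\cdot}$, verifies hypotheses 1)--3) exactly as the paper does (continuity of $G$ plus Gaussian bounds and dominated convergence; Lemma~\ref{Lemma_Converg_Solutions}; uniform class-$\mathcal K$ smallness), and your $J_1+J_2$ decomposition for hypothesis 4), after moving the convolution onto the kernel, is precisely the $I_n^1+I_n^2$ split of the paper's Lemma~\ref{Lemma_Converg_Character}, including the cut of the singular layer $s\in[t_0,t_0+\delta]$ via the Kato condition and the use of the density convergence from Section~\ref{Section_Conv_Densities}. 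The only detail you gloss over is that the uniform mass bound on the non-singular range (the paper's \eqref{eq_l234}) uses the compact support of the measures, which holds in the localized setting of Subsection~\ref{subsction_compact_case} where this lemma is applied.
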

\begin{proof}
To prove the lemma we make use of Lemma \ref{Lemma_converg_functionals} in which we put $\xi_{n,t}=\eta_{n,t}$, $A_{n,t}=A_{n,t}(\eta_{n})$, $\xi_{0,t}=\eta_t$, and $A_{0,t}=A_{t}(\eta)$, $n\geq1$, $t\geq0$. Here $(\eta_{n,t})_{t\geq 0}$ is a solution to the system of the form (\ref{eq_eta}) with coefficients $a_n, \sigma_{n,k}$. Then
$$
f_{0,t}(t_0,x_0)=\int_{t_0}^{t+t_0} ds\int_{\R}G(t_0,x,s,y)\mu(dy),
$$
where $G(s,x,t,y)$, $0\leq s\leq t$, $x,y\in\R$, is the transition probability density of the process $(\eta_t^2)_{t\geq0}$. For each $T>0$, the function $G(s,x,t,y)$ is continuous on $0\leq s<t\leq T$, $x,y\in \R$ (see \cite{Portenko90}, Ch.2, \S 2). Taking into account the inequality (\ref{eq_gaussian_estimates}), which holds locally uniformly in $x$, we  obtain assertion 1) of the Lemma \ref{Lemma_converg_functionals} from the dominated convergence theorem. Assertion 2) is a consequence of Lemma \ref{Lemma_Converg_Solutions}. Assertion 3) is obvious. Assertion 4) follows from Lemma \ref{Lemma_Converg_Character}, which is proved in Section \ref{Section_Conv_Densities}.
\end{proof}
\begin{Lemma}\label{Lemma_Converg_Derivatives_2}
For all $T\geq0, \ x\in\R,$
$$
\sup_{0\leq t\leq T}|Y_{n,t}(x)-Y_t(x)|\to 0, \ n\to\infty, \ \mbox{in probability} \ \P.
$$
\end{Lemma}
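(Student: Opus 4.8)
The plan is to reduce the claim to a stochastic Gronwall argument for the difference $Z_{n,t}:=Y_{n,t}(x)-Y_t(x)$. Writing \eqref{eq_derivative} in the additive-functional form and subtracting \eqref{eq_derivative_main}, then adding and subtracting the coefficients evaluated along the approximating objects, one sees that $Z_{n}$ solves the linear equation
\begin{equation*}
Z_{n,t}=\int_0^t dA_{n,s}^{\nu}(\vp_n(x))\,Z_{n,s}+\sum_{k=1}^m\int_0^t\nabla\s_{n,k}(s,\vp_{n,s}(x))Z_{n,s}\,dw_k(s)+R_{n,t},
\end{equation*}
where $R_{n,t}=R^{(1)}_{n,t}+R^{(2)}_{n,t}$ is an inhomogeneous forcing with
\begin{equation*}
R^{(1)}_{n,t}=\int_0^t d\bigl(A_{n,s}^{\nu}(\vp_n(x))-A_s^{\nu}(\vp(x))\bigr)Y_s(x),
\end{equation*}
\begin{equation*}
R^{(2)}_{n,t}=\sum_{k=1}^m\int_0^t\bigl(\nabla\s_{n,k}(s,\vp_{n,s}(x))-\nabla\s_{k}(s,\vp_s(x))\bigr)Y_s(x)\,dw_k(s).
\end{equation*}
Since $Z_n$ is driven by exactly the coefficients appearing in \eqref{eq_derivative}, the same exponential-weight estimate used in the proof of Lemma \ref{Lemma_Converg_Derivatives} applies, so it suffices to prove that $\sup_{t\in[0,T]}|R_{n,t}|\to 0$ in probability and then transfer this bound to $Z_n$.

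First I would treat the drift forcing $R^{(1)}_n$. Because each $t\mapsto A^{\nu^{ij},\pm}_{n,t}$ and $t\mapsto A^{\nu^{ij},\pm}_t$ is continuous and nondecreasing with value $0$ at $t=0$, integration by parts (the covariation with the continuous finite-variation integrator vanishes) gives
\begin{equation*}
R^{(1)}_{n,t}=\bigl(A_{n,t}^{\nu}(\vp_n(x))-A_t^{\nu}(\vp(x))\bigr)Y_t(x)-\int_0^t\bigl(A_{n,s}^{\nu}(\vp_n(x))-A_s^{\nu}(\vp(x))\bigr)dY_s(x).
\end{equation*}
By Lemma \ref{Lemma_converg_W_functionals} the integrator difference tends to $0$ uniformly on $[0,T]$ in probability; combined with the boundedness in probability of $\sup_{[0,T]}|Y_t(x)|$ (Lemma \ref{Lemma_Converg_Derivatives}) the first term vanishes, while the second vanishes by the dominated-convergence theorem for Lebesgue--Stieltjes and stochastic integrals, the required domination being furnished by the uniform bounds of Lemmas \ref{Proposition_moments_A} and \ref{Lemma_Converg_Derivatives}.

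The main obstacle is the martingale forcing $R^{(2)}_n$, because $\nabla\s_k$ is not continuous (only of the class $\mathcal{K}$ through (C4)) and it is evaluated along the two distinct processes $\vp_{n}$ and $\vp$. By the Burkholder--Davis--Gundy inequality it suffices to show that $\E\sum_k\int_0^T|\nabla\s_{n,k}(s,\vp_{n,s}(x))-\nabla\s_{k}(s,\vp_s(x))|^2|Y_s(x)|^2\,ds\to 0$. Applying H\"older's inequality with exponent $\tfrac{2+\r}{2}$ splits off the factor $|Y_s(x)|^2$, whose $\tfrac{2(2+\r)}{\r}$-moment is finite uniformly in $n$ by Lemma \ref{Lemma_Converg_Derivatives}, and reduces the problem to
\begin{equation*}
\E\int_0^T\bigl|\nabla\s_{n,k}(s,\vp_{n,s}(x))-\nabla\s_{k}(s,\vp_s(x))\bigr|^{2+\r}ds\to0.
\end{equation*}
I would insert a continuous approximation $g$ of $\nabla\s_k$ and bound the integrand by $C(|\nabla\s_{n,k}-g|^{2+\r}(s,\vp_{n,s})+|g(s,\vp_{n,s})-g(s,\vp_s)|^{2+\r}+|g-\nabla\s_k|^{2+\r}(s,\vp_s))$: the middle term tends to $0$ by continuity of $g$ and Lemma \ref{Lemma_Converg_Solutions}, while the outer two are characteristics of W-functionals of $\vp_n$ and $\vp$ associated with the measures $|\nabla\s_{n,k}-g|^{2+\r}dsdy$ and $|g-\nabla\s_k|^{2+\r}dsdy$, which are made uniformly small in $n$ by choosing $g$ close to $\nabla\s_k$ in the class $\mathcal{K}$ (using that $\nabla\s_{n,k}=\nabla\s_k\ast\omega_n\to\nabla\s_k$) together with the uniform Gaussian bounds of Remark \ref{Remark_uniform_Gaussian_estimates} and the convergence of densities of Lemma \ref{Lemma_Converg_Character}. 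This is the step where the Kato-type condition (C4) and the density convergence are essential.

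Finally, with $\sup_{[0,T]}|R_{n,t}|\to0$ in probability established, I would close the argument by applying It\^o's formula to $e^{h_n(t,C,l)}|Z_{n,t}|^{2l}$ with the same weight $h_n$ and constant $C$ as in Lemma \ref{Lemma_Converg_Derivatives}. The quadratic terms in $Z_n$ are absorbed by the weight exactly as before, and the only new contributions are cross terms pairing $Z_n$ with $R_n$; estimating these by the uniform moment bounds of Lemma \ref{Lemma_Converg_Derivatives} and the exponential moments of Lemma \ref{Proposition_moments_A}, one obtains $\E\sup_{[0,T]}e^{2h_n(t,C,l)}|Z_{n,t}|^{4l}\to0$. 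Removing the exponential weight by H\"older's inequality and Lemma \ref{Proposition_moments_A} yields $\E\sup_{[0,T]}|Z_{n,t}|^{2l}\to0$, and hence the asserted uniform convergence in probability. I expect the third paragraph to be the crux; the remaining steps are the routine linear-SDE stability mechanics already rehearsed in the proof of Lemma \ref{Lemma_Converg_Derivatives}.
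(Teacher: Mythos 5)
Your route (direct difference $Z_{n,t}=Y_{n,t}(x)-Y_t(x)$ plus an inhomogeneous stochastic Gronwall) is genuinely different from the paper's. The paper never estimates the difference of the drift terms at all: it conjugates with the resolvent of the finite-variation part, i.e.\ the matrix $Z_n(t)$ solving $dZ_n(t)=-Z_n(t)\,dA_{n,t}(\varphi_n(x))$, so that by It\^o's formula $Z_n(t)Y_{n,t}(x)-Z_0(t)Y_{0,t}(x)$ is a \emph{pure stochastic integral} --- the additive-functional drift cancels identically instead of appearing as a forcing. Convergence of $Z_n^{\pm1}$ comes from Lemma \ref{Lemma_converg_W_functionals} via Propositions \ref{Prop_Gronwall_Lemma} and \ref{Proposition_Monot_convergence}, and the martingale difference is handled by a weighted second-moment computation in which the weight $\exp\{-K\int_0^t\sum_k|\nabla\sigma_{0,k}|^2ds\}$ makes the quadratic term nonpositive ($II\leq 0$). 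Crucially, the paper's substitute for your Kato-approximation paragraph is Proposition \ref{Proposition_Kulik}: $\nabla\sigma_{n,k}(s,\varphi_{n,s}(x))\to\nabla\sigma_{0,k}(s,\varphi_{0,s}(x))$ in probability, obtained from $L_2$-convergence of $\nabla\sigma_{n,k}$, absolute continuity of the laws, and uniform integrability of the densities supplied by the Gaussian estimates; the $(2+\rho)$-moment bound \eqref{eq_gnom_2} then upgrades this to convergence of the relevant expectations. Your Kato-norm approximation by a continuous $g$ could in principle replace this, but the density claim is asserted rather than proved: it does hold in the localized setting of Subsection \ref{subsction_compact_case} (truncate $\nabla\sigma_k$, control the tail by the small-time smallness in (C4) plus local integrability as in Remark \ref{Remark_measure_finite}, mollify, and dominate the Gaussian bracket by an $L_p$-norm with $p>(d+2)/2$, using Jensen $|u\ast\omega_n|^{2+\rho}\leq|u|^{2+\rho}\ast\omega_n$ for uniformity in $n$), but this is a nontrivial lemma; also Lemma \ref{Lemma_Converg_Character} concerns the drift characteristics and plays no role here --- the uniform Gaussian bounds of Remark \ref{Remark_uniform_Gaussian_estimates} are what you actually need.

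The genuine gap is your closing step. When you apply It\^o's formula to $e^{h_n(t,C,l)}|Z_{n,t}|^{2l}$, the ``cross terms pairing $Z_n$ with $R_n$'' include $\int_0^t e^{h_n}|Z_{n,s}|^{2l-2}\langle Z_{n,s},dR^{(1)}_{n,s}\rangle$ with $dR^{(1)}_{n,s}=d\bigl(A_{n,s}(\varphi_n(x))-A_s(\varphi(x))\bigr)Y_s(x)$, and this Lebesgue--Stieltjes integrator enters through its \emph{total variation}, which is of order $\Var A_{n,T}+\Var A_{T}$ and does not tend to zero --- only the running supremum $\sup_t|A_{n,t}(\varphi_n(x))-A_t(\varphi(x))|$ does. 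So ``estimating these by the uniform moment bounds'' fails as stated: smallness of $\sup_t|R_{n,t}|$ cannot be fed into the cross term without a further device. The repair is either another integration by parts inside the It\^o estimate, or the substitution $V_n=Z_n-R_n$, whose equation carries the new forcing $\int_0^t dA_{n,s}(\varphi_n(x))R_{n,s}+\sum_k\int_0^t\nabla\sigma_{n,k}(s,\varphi_{n,s}(x))R_{n,s}dw_k(s)$, now genuinely controlled by $\sup_s|R_{n,s}|$ times exponentially integrable variations; equivalently, variation of constants with the fundamental matrix --- which is in essence the paper's conjugation trick, performed once and exactly. Your treatment of $R^{(1)}_n$ itself (integration by parts, Lemma \ref{Lemma_converg_W_functionals}, dominated convergence with the moment bounds of Lemmas \ref{Proposition_moments_A} and \ref{Lemma_Converg_Derivatives}) is sound, as is the BDG/H\"older reduction for $R^{(2)}_n$; with the substitution fix and a written-out Kato-density lemma the proposal would close, but as written these two steps are not justified.
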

To prove the lemma we need three auxiliary propositions.
The first one is a variant of the Gronwall inequality
and can be obtained by a standard argument.
\begin{Proposition}\label{Prop_Gronwall_Lemma}
Let  $x(t)$ be a continuous function on
$[0,+\infty)$, $C(t)$ be a non-negative continuous function
on $[0,+\infty)$, $K(t)$ be a non-negative, non-decreasing function, and  $K(0)=0$. If for
all $0\leq t\leq T$,
$$
x(t)\leq C(t)+\left|\int_0^t x(s)dK(s)\right|,
$$
then
$$
x(T)\leq \left(\sup_{0\leq t\leq T}C(t)\right)\exp\{K(T)\}.
$$
\end{Proposition}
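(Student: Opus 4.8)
The plan is to reduce the statement to the classical Gronwall--Bellman inequality for Lebesgue--Stieltjes integrals and then establish the latter by a Volterra-type iteration. First I would set $\bar C:=\sup_{0\le t\le T}C(t)$, which is finite since $C$ is continuous on the compact interval $[0,T]$. Because $K$ is non-decreasing, the associated Lebesgue--Stieltjes measure $dK$ is non-negative; using that $x$ is non-negative --- which is the situation in every application of the proposition, where $x(t)=|Y_t(x)|$ is the norm of the derivative matrix, so that $\int_0^t x(s)\,dK(s)\ge0$ and the absolute value equals the integral itself --- the hypothesis becomes, for all $t\in[0,T]$,
$$
x(t)\le \bar C+\int_0^t x(s)\,dK(s).
$$
This is the form to which the standard argument applies.

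Second, I would iterate this inequality. Substituting the bound for the integrand $n$ times produces
$$
x(t)\le \bar C\sum_{j=0}^{n-1} I_j(t)+R_n(t),
$$
where $I_j(t)=\int_{0\le s_1\le\dots\le s_j\le t}dK(s_1)\cdots dK(s_j)$ is the $dK^{\otimes j}$-measure of the ordered $j$-simplex, $I_0(t)=1$, and $R_n(t)=\int_{0\le s_1\le\dots\le s_n\le t}x(s_1)\,dK(s_1)\cdots dK(s_n)$ is the remainder. The key computation is the iterated-kernel estimate $I_j(t)\le K(t)^j/j!$: by the symmetry of the product measure $dK^{\otimes j}$ the $j!$ ordered simplices cover the cube $[0,t]^j$, so $j!\,I_j(t)\le K(t)^j$, with equality when $K$ has no atoms (which is precisely the case in the application, where $K$ is the continuous functional $\Var A_t(\varphi(x))$). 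Here $K(0)=0$ is used so that $K(t)$ itself, rather than an increment, appears.

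Third, I would show that the remainder vanishes. Since $x$ is continuous it is bounded on $[0,T]$, say $|x|\le M$ there; then $|R_n(t)|\le M\,I_n(t)\le M\,K(T)^n/n!\to 0$ as $n\to\infty$. Passing to the limit in the iterated inequality and summing the majorant series therefore gives $x(t)\le \bar C\sum_{j\ge0}K(t)^j/j!=\bar C\exp\{K(t)\}$ for every $t\in[0,T]$, and specializing to $t=T$ yields the assertion.

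The step I expect to be the main obstacle is the rigorous treatment of the Stieltjes iterated integral for a merely non-decreasing $K$: one must justify the repeated application of Fubini's theorem on the simplices and the symmetry argument giving $I_j(t)\le K(t)^j/j!$, with attention to atoms of $dK$, where the diagonal carries positive mass and the inequality is strict. In that generality the sharp bound is the product integral $\bar C\prod_{s\le t}(1+\Delta K(s))\exp\{K^c(t)\}$, and the elementary inequality $1+\Delta K\le e^{\Delta K}$ guarantees that the exponential estimate $\bar C\exp\{K(t)\}$ persists. The reduction of the absolute value, which rests on the non-negativity of the integrand, is the other point requiring care.
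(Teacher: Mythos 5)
The paper itself gives no proof of this proposition --- it is dispatched with the phrase ``can be obtained by a standard argument'' --- and the standard Stieltjes--Gronwall iteration you carry out is surely the argument intended, so in substance your route is the right one. Two points, however, need repair, one in the statement and one in your key estimate.

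First, your reduction to non-negative $x$ is presented as a convenience borrowed from the applications, but it is in fact a necessary correction: as literally stated, with signed $x$ and the absolute value around the integral, the proposition is \emph{false}, because $y\mapsto\left|\int_0^t y\,dK\right|$ is not monotone and the iteration cannot even start. Concretely, take $K(t)=t$, $C\equiv\varepsilon$, $T=\tfrac12+\delta$, and let $x=-M$ on $[0,\tfrac12]$, then rise linearly to $x(T)=M/2$. For $t\le\tfrac12$ the hypothesis holds trivially, while for $t\in[\tfrac12,T]$ one checks $\int_0^t x(s)\,ds\le -M/2$, so that $x(t)\le M/2\le\left|\int_0^t x(s)\,ds\right|$ and the hypothesis holds even with $C\equiv 0$; yet $x(T)=M/2$ exceeds $\varepsilon\,e^{K(T)}$ once $M$ is large. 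So you should state explicitly that the proposition is proved (and used) only for $x\ge0$, which indeed covers every invocation in the paper ($x=|Z_n|$, $|Z_n^{-1}|$, $|Z_n-Z_0|$, with $K_t=\Var A_{n,t}(\varphi_n(x))$, possibly plus an absolutely continuous term).

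Second, your justification of the simplex bound runs in the wrong direction: since the $j!$ closed ordered simplices \emph{cover} the cube $[0,t]^j$, subadditivity gives $j!\,I_j(t)\ge K(t)^j$, not $\le$. The inequality you need, $I_j(t)\le K(t)^j/j!$, requires the simplices to be essentially disjoint, i.e.\ that the diagonals $\{s_i=s_{i'}\}$ be null for $dK\otimes dK$ --- exactly the atomless (continuous $K$) case, where one has equality. With atoms the scheme does not merely lose a constant: a single unit atom at $t_0$ gives $I_j(t)=1$ for every $j$ and $t\ge t_0$, so the majorant series diverges and your remainder $R_n$ does not tend to zero --- the iteration itself collapses, and the product-integral form you mention at the end (legitimate here because $x$ is continuous, so the integrand and its left limit agree), together with $\prod_{s\le t}(1+\Delta K(s))\,e^{K^c(t)}\le e^{K(t)}$, becomes the only route, not just a sharper variant. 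In the paper's applications $K$ is continuous (it is built from W-functionals, which are continuous in $t$, plus a Lebesgue integral), so the clean atomless case suffices; with the direction of the covering argument corrected, your proof is complete in that case.
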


The following simple proposition is technical.
\begin{Proposition}\label{Proposition_Monot_convergence} Let
$\{h_n : \ n\geq 1\}$ be a sequence of continuous monotonic
functions on $[0,T]$, and $f\in C([0,T]).$ Suppose that  $t\in[0,T],$
$h_n(t)\to h_0(t),$ as $n\to\infty,$ $t\in[0,T]$.
 Then
$$
\sup_{t\in[0,T]}\left|\int_0^t f(s)dh_n(s)-\int_0^t
f(s)dh_0(s)\right|\to 0, \ n\to\infty.
$$
\end{Proposition}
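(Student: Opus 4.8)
The plan is to reduce the claim to a uniform‑in‑$t$ estimate assembled from two ingredients: the \emph{uniform} (not merely pointwise) convergence of the $h_n$, and a step‑function approximation of $f$. First I would observe that it suffices to treat nondecreasing $h_n$ (the general monotone case follows by a sign change, and in the applications the $h_n$ are nondecreasing); then $h_0$, as a pointwise limit of nondecreasing functions, is nondecreasing. I would also use that $h_0$ is continuous in $t$ — this is automatic in the situations where the proposition is applied, since there $h_0$ is a W‑functional, and it is exactly this continuity that makes the statement correct (without it the conclusion fails, as one sees by letting the $h_n$ approximate a jump). Since $h_n\to h_0$ pointwise with the $h_n$ monotone and $h_0$ continuous on the compact interval $[0,T]$, a Dini‑type (Pólya) argument upgrades the convergence to uniform convergence, $\delta_n:=\sup_{t\in[0,T]}|h_n(t)-h_0(t)|\to0$: pick, using continuity of $h_0$, a partition on which the increments of $h_0$ are small, and sandwich $h_n(t)$ for $t$ in a subinterval between the (convergent) values of $h_n$ at the endpoints. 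I would also record the uniform bound $M:=\sup_{n}\bigl(h_n(T)-h_n(0)\bigr)<\infty$, valid because $h_n(T)-h_n(0)\to h_0(T)-h_0(0)$; for nondecreasing $h_n$ the total variation of $h_n$ on any $[0,t]\subseteq[0,T]$ equals $h_n(t)-h_n(0)\le M$.

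Next I would approximate $f$ by a step function. By uniform continuity of $f$ on $[0,T]$, fix $\varepsilon>0$ and choose a partition $0=s_0<s_1<\dots<s_m=T$ (with $m=m(\varepsilon)$) so fine that, with $f_\varepsilon:=\sum_{i=1}^m f(s_{i-1})\mathds{1}_{[s_{i-1},s_i)}$, one has $\|f-f_\varepsilon\|_\infty<\varepsilon$. Since $dh_n$ is a finite nonnegative measure with total mass $h_n(t)-h_n(0)\le M$ on $[0,t]$, the approximation error is controlled uniformly in both $t$ and $n$:
$$
\sup_{t\in[0,T]}\left|\int_0^t (f-f_\varepsilon)\,dh_n\right|\le\varepsilon M,
$$
and the same bound (with $M$ replaced by $h_0(T)-h_0(0)\le M$) holds with $h_n$ replaced by $h_0$.

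It remains to pass to the limit for the step function, where the point is that its Stieltjes integral is a finite sum of increments of $h_n$. Since $h_n$ is continuous (so $dh_n$ is atomless),
$$
\int_0^t f_\varepsilon\,dh_n=\sum_{i=1}^m f(s_{i-1})\big[h_n(s_i\wedge t)-h_n(s_{i-1}\wedge t)\big],
$$
so subtracting the corresponding expression for $h_0$ and applying $\delta_n$ to each of the $m$ terms gives
$$
\sup_{t\in[0,T]}\left|\int_0^t f_\varepsilon\,dh_n-\int_0^t f_\varepsilon\,dh_0\right|\le 2m\|f\|_\infty\,\delta_n\to0,\ n\to\infty,
$$
uniformly in $t$, with $m$ fixed by $\varepsilon$. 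Combining the three estimates through the triangle inequality yields $\limsup_{n\to\infty}\sup_{t\in[0,T]}\bigl|\int_0^t f\,dh_n-\int_0^t f\,dh_0\bigr|\le 2\varepsilon M$, and letting $\varepsilon\downarrow0$ finishes the proof.

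The main obstacle — and the reason a naive argument does not work — is obtaining uniformity in $t$. Pointwise convergence $h_n\to h_0$ alone gives convergence of $\int_0^t f\,dh_n$ for each fixed $t$, but the last partial term $h_n(t)$ in the step‑function sum ranges over all $t$, so one genuinely needs the uniform convergence $\delta_n\to0$, i.e. the Pólya step together with the continuity of $h_0$. I would also note that integration by parts is not a viable shortcut: it would replace $\int f\,d(h_n-h_0)$ by $\int (h_n-h_0)\,df$, and since $f$ is merely continuous and need not be of bounded variation, the latter cannot be estimated by $\delta_n$ times a finite total variation of $f$.
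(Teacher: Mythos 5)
Your proof is correct and complete; note that the paper itself offers no proof of this proposition (it is dismissed as ``technical''), so there is nothing to diverge from --- your argument is the natural one: Pólya's theorem upgrading pointwise to uniform convergence of monotone functions with continuous limit, a step-function approximation of $f$ with error controlled uniformly by the bounded total masses $h_n(T)-h_n(0)\le M$, and the finite-sum estimate $2m\|f\|_\infty\delta_n$ for the step part, where the $s_i\wedge t$ device is exactly what delivers uniformity in $t$. You also correctly identify a genuine imprecision in the statement as printed: continuity of $h_0$ is not implied by the hypotheses (pointwise limits of continuous monotone functions can jump, and then the conclusion fails, since $t\mapsto\int_0^t f\,dh_0$ jumps while each $t\mapsto\int_0^t f\,dh_n$ is continuous), but it holds in every application in the paper, where $h_0$ is a continuous W-functional such as $A_{0,s}^{\pm}$; your observation that integration by parts is not a shortcut, because $f$ need not be of bounded variation, is likewise accurate. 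One cosmetic point: reducing ``monotonic'' to ``nondecreasing'' by a sign change handles a sequence of one fixed orientation, and a mixed sequence formally requires splitting into the nondecreasing and nonincreasing subsequences (each then falling under your argument, with $h_0$ constant if both subsequences are infinite); this is a triviality, not a gap.
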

\begin{Proposition}\label{Proposition_Kulik}
{\it Let $X, Y$ be complete separable metric spaces, $(\Omega,
\mathcal{F}, {P})$ be a probability space. Let measurable mappings $\xi_n:\Omega\to
X,$ $h_n: X\to Y$, $n\geq 0$, be such that
\begin{enumerate}[1)]
\item $\xi_n\to\xi_0, \ n\to\infty,$ in probability $P$;
\item $h_n\to h_0, \ n\to\infty,$ in measure $\nu$, where $\nu$ is a probability
measure on X;
\item for all $n\geq 1$ the distribution $P_{\xi_n}$
of $\xi_n$ is absolutely continuous w.r.t. the measure $\nu$; \item
the sequence of densities $\{\frac{dP_{\xi_n}}{d\nu}: \ n\geq1\}$
is uniformly integrable w.r.t. the measure $\nu$.
\end{enumerate}
Then $h_n(\xi_n)\to h_0(\xi_0),  \ n\to\infty,$ in probability.}
\end{Proposition}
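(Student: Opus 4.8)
The plan is to fix $\varepsilon>0$ and show that $P\big(d_Y(h_n(\xi_n),h_0(\xi_0))>\varepsilon\big)\to0$; since $\varepsilon$ is arbitrary this gives convergence in probability. Writing $d_X,d_Y$ for the two metrics and $g_n=\frac{dP_{\xi_n}}{d\nu}$ for the densities from 3), I would split via the triangle inequality $d_Y(h_n(\xi_n),h_0(\xi_0))\le d_Y(h_n(\xi_n),h_0(\xi_n))+d_Y(h_0(\xi_n),h_0(\xi_0))$ and estimate the two corresponding events separately. The first event rests only on 2)--4); the second is the delicate one, since $h_0$ is merely measurable and $\xi_0$ carries no assumed density.

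For the first term, observe that
$$
P\big(d_Y(h_n(\xi_n),h_0(\xi_n))>\varepsilon/2\big)=P_{\xi_n}(B_n),\qquad B_n:=\{x\in X:d_Y(h_n(x),h_0(x))>\varepsilon/2\}.
$$
By 2) (convergence in measure $\nu$) we have $\nu(B_n)\to0$. By 4) the family $\{g_n\}$ is uniformly integrable, so for every $\eta>0$ there is $\theta>0$ such that $\sup_n\int_Ag_n\,d\nu<\eta$ whenever $\nu(A)<\theta$; applied to $A=B_n$ for large $n$ this yields $P_{\xi_n}(B_n)=\int_{B_n}g_n\,d\nu\to0$.

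For the second term I would approximate $h_0$ by a continuous map on a large compact set. Since $X$ is Polish, $\nu$ is Radon, so Lusin's theorem furnishes, for any $\theta>0$, a compact $K$ with $\nu(X\setminus K)<\theta$ on which $h_0$ is continuous, hence \emph{uniformly} continuous. I would then use the inclusion
$$
\{d_Y(h_0(\xi_n),h_0(\xi_0))>\varepsilon/2\}\subseteq\{\xi_0\notin K\}\cup\{\xi_n\notin K\}\cup\{d_X(\xi_n,\xi_0)>\gamma\},
$$
where $\gamma$ is chosen, via uniform continuity of $h_0|_K$, so that $d_X(\xi_n,\xi_0)\le\gamma$ together with $\xi_n,\xi_0\in K$ forces $d_Y(h_0(\xi_n),h_0(\xi_0))\le\varepsilon/2$. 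Here $P(\xi_n\notin K)=\int_{X\setminus K}g_n\,d\nu$ is uniformly small by 4) once $\theta$ is small; $P(\xi_0\notin K)$ is controlled because 1) implies $P_{\xi_n}\Rightarrow P_{\xi_0}$ weakly, so by the portmanteau theorem applied to the open set $X\setminus K$ we get $P_{\xi_0}(X\setminus K)\le\liminf_nP_{\xi_n}(X\setminus K)$, again uniformly small; and $P(d_X(\xi_n,\xi_0)>\gamma)\to0$ by 1). Fixing $\eta>0$, choosing $\theta$ first, then $K$, then $\gamma$, and finally $n$ large makes each of the three probabilities below $\eta/3$, so $\limsup_nP\big(d_Y(h_0(\xi_n),h_0(\xi_0))>\varepsilon/2\big)\le\eta$.

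The main obstacle is precisely this second term: reconciling the discontinuity of the limit map $h_0$ with the absence of any regularity for the law of the limit variable $\xi_0$. The two ideas that overcome it are (i) passing to the restriction of $h_0$ to a Lusin compact set, which upgrades continuity to uniform continuity, and (ii) using weak convergence of the laws (a consequence of convergence in probability) through the portmanteau inequality to bound $P_{\xi_0}(X\setminus K)$, thereby avoiding any need for $P_{\xi_0}\ll\nu$. With these in hand the remaining estimates combine routinely, and letting $\eta\downarrow0$ and then $\varepsilon\downarrow0$ completes the argument.
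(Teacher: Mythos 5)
Your proof is correct. Note first that the paper itself gives no proof of this proposition: it is quoted with a citation to Bogachev, \emph{Measure Theory}, Vol.~2, Corollary 9.9.11, and to Kulik--Pilipenko, Lemma~2, so there is no in-paper argument to compare against; your write-up supplies a complete, self-contained proof along the standard lines of those references. The decomposition $d_Y(h_n(\xi_n),h_0(\xi_0))\le d_Y(h_n(\xi_n),h_0(\xi_n))+d_Y(h_0(\xi_n),h_0(\xi_0))$ is the natural one: the first term is exactly where hypotheses 2)--4) enter (convergence in $\nu$-measure gives $\nu(B_n)\to0$, and uniform integrability of the densities --- together with the automatic bound $\int g_n\,d\nu=1$ --- upgrades to uniform absolute continuity, so $P_{\xi_n}(B_n)\to0$), and your treatment of the second term via a Lusin compact $K$ is sound, since $Y$ separable metric makes Lusin's theorem applicable to $h_0$ and continuity on the compact $K$ is automatically uniform. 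The one genuinely delicate point, $P(\xi_0\notin K)$, you handle elegantly: the portmanteau inequality for the open set $X\setminus K$, combined with the uniform bound $\sup_n\int_{X\setminus K}g_n\,d\nu$, sidesteps any density assumption on $\xi_0$. It is worth remarking that your portmanteau argument, run over arbitrary open neighborhoods of $\nu$-null sets and using outer regularity of $\nu$, actually shows $P_{\xi_0}\ll\nu$ as a consequence of the hypotheses; this is what makes the conclusion well-posed despite $h_0$ being determined by condition 2) only up to $\nu$-null sets, a point your proof implicitly resolves but could state explicitly. The quantifier bookkeeping (fix $\varepsilon$, then $\eta$, then $\theta$ from uniform integrability, then $K$, then $\gamma$, then $n$ large) is in the right order throughout.
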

The proof can be found, for example, in \cite{Bogachev07-2}, Corollary 9.9.11 or
\cite{Kulik+00}, Lemma 2.
\begin{proof}[Proof of Lemma \ref{Lemma_Converg_Derivatives_2}]
Let $Z_n(t)$, $n\geq 0$, be a solution of the equation
$$
\left\{
\begin{aligned}
dZ_n(t)&=-Z_n(t)dA_{n,t}(\varphi_n(x)), \ t\in[0,T],\\
Z_n(0)&=E.
\end{aligned}
\right.
$$
where $E$ is the $d$-dimensional identity matrix, $T>0$.
For each $t\in[0,T]$,  $n\geq 0$ the matrix $Z_n(t)$ is invertible, and
$$
\left\{
\begin{aligned}
dZ_n^{-1}(t)&=dA_{n,t}(\varphi_n(x))Z_n^{-1}(t), \ t\in[0,T],\\
Z_n^{-1}(0)&=E,
\end{aligned}
\right.
$$
We get
$$
|Z_n(t)|\leq |E|+\int_0^t|Z_n(s)|d\Var A_{n,s}(\varphi_n(x)).
$$
It follows from Proposition \ref{Prop_Gronwall_Lemma} that
\begin{equation}\label{eq_moment_Z_n}
\sup_{t\in[0,T]}|Z_n(t)|\leq d^{1/2} \exp\left\{\Var A_{n,T}(\varphi_n(x))\right\}.
\end{equation}
Here we use that $|E|=d^{1/2}$.
Similarly,
\begin{equation}\label{eq_moment_Z_1}
\sup_{t\in[0,T]}|Z_n^{-1}(t)|\leq d^{1/2} \exp\left\{\Var A_{n,T}(\varphi_n(x))\right\}.
\end{equation}
Let us prove that
\begin{equation}\label{eq_converg_Z_n_Z_0_Z_n_1_Z_0_1}
\sup_{t\in[0,T]} |Z_n(t)-Z_0(t)|+\sup_{t\in[0,T]} |Z_n^{-1}(t)-Z_0^{-1}(t)|\to 0, \ n\to \infty, \ \mbox{in probability } \mathds{P}.
\end{equation}
We have
\begin{multline*}
|Z_n(t)-Z_0(t)|\leq\\ \left|\int_0^t(Z_0(s)-Z_n(s))dA_{n,s}(\varphi_n(x))\right|+
\left|\int_0^tZ_0(s)\left(dA_{0,s}(\varphi_0(x))-dA_{n,s}(\varphi_n(x))\right)\right|\leq\\
\int_0^t\left|Z_0(s)-Z_n(s)\right|d\Var A_{n,s}(\varphi_n(x))+
\left|\int_0^tZ_0(s)\left(dA_{0,s}(\varphi_0(x))-dA_{n,s}(\varphi_n(x))\right)\right|.
\end{multline*}
By Proposition \ref{Prop_Gronwall_Lemma},
\begin{multline}\label{eq_kkk}
|Z_n(t)-Z_0(t)|\leq\\
\sup_{0\leq u\leq t}\left|\int_0^uZ_0(s)\left(dA_{0,s}(\varphi_0(x))-dA_{n,s}(\varphi_n(x))\right)\right|\exp\{\Var A_{n,t}(\varphi_n(x))\}\leq\\
\sup_{0\leq u\leq t}\left(\left|\int_0^uZ_0(s)\left(dA_{0,s}^+(\varphi_0(x))-dA_{n,s}^+(\varphi_n(x))\right)\right|\right.+\\
\sup_{0\leq u\leq t}\left.\left|\int_0^uZ_0(s)\left(dA_{0,s}^-(\varphi_0(x))-dA_{n,s}^-(\varphi_n(x))\right)\right|\right)\exp\{\Var A_{n,t}(\varphi_n(x))\}.
\end{multline}
Let us apply Proposition \ref{Proposition_Monot_convergence}. Put $h_n(s)=A_{n,s}^+(\varphi_n(x))$, $n\geq 0$, $f(s)=Z_0(s)$. Taking into account  Lemma \ref{Proposition_moments_A} we get that the first summand in the right-hand side of (\ref{eq_kkk}) tends to $0$ as $n\to \infty$ in probability $\mathds{P}$ uniformly in $t\in[0,T]$. The second summand can be treated analogously. Thus we have proved that
\begin{equation*}
\sup_{t\in[0,T]}|Z_n(t)-Z_0(t)|\to 0, \ n\to \infty, \ \mbox{in probability } \mathds{P}.
\end{equation*}
The same relation for $Z_n^{-1}$ can be obtained similarly.

Making use of Ito's formula we get
\begin{multline*}
Z_n(t)Y_{n,t}(x)-Z_0(t)Y_{0,t}(x)=\\
\sum_{k=1}^m
\int_0^t\Big(Z_n(s)\nabla\sigma_{n,k}(s,\varphi_{n,s}(x))Y_{n,s}(x)-Z_0(s)\nabla\sigma_{0,k}(s,\varphi_{0,s}(x))Y_{0,s}(x)\Big)dw_k(s).
\end{multline*}
Applying Ito's formula again, we get for any $K>0$,
\begin{multline}\label{eq_sss}
|Z_n(t)Y_{n,t}(x)-Z_0(t)Y_{0,t}(x)|^2\exp\left\{-K\int_0^t\sum_{k=1}^m\left|\nabla\sigma_{0,k}(s,\varphi_{0,s}(x))\right|^2ds\right\}=\\
\int_0^t\exp\left\{-K\sum_{k=1}^m\int_0^s\left|\nabla\sigma_{0,k}(u,\varphi_{0,u}(x))\right|^2du\right\}\times\\
\sum_{k=1}^m\left(\big|Z_n(s)\nabla\sigma_{n,k}(s,\varphi_{n,s}(x))Y_{n,s}(x)-Z_0(s)\nabla\sigma_{0,k}(s,\varphi_{0,s}(x))Y_{0,s}(x)\big|^2\right.-\\
K|\nabla\sigma_{0,k}(s,\varphi_{0,s}(x))|^2|Z_n(s)Y_{n,s}(x)-Z_0(s)Y_{0,s}(x)|^2\Big)ds+\\
2\int_0^t\exp\left\{-K\int_0^t\sum_{k=1}^m\left|\nabla\sigma_{0,k}(s,\varphi_{0,s}(x))\right|^2ds\right\}\big(Z_n(s)Y_{n,s}(x)-Z_0(s)Y_{0,s}(x)\big)\times\\
\sum_{k=1}^m\Big(Z_n(s)\nabla\sigma_{n,k}(s,\varphi_{n,s}(x))Y_{n,s}(x)-Z_0(s)\nabla\sigma_{0,k}(s,\varphi_{0,s}(x))Y_{0,s}(x)\Big)dw_k(s).
\end{multline}
Taking into account the inequalities \eqref{eq_gnom_2}, (\ref{eq_moment_Z_n}) and Lemma \ref{Lemma_Converg_Derivatives}, one can see that the last summand in the right-hand side of (\ref{eq_sss}) is a square integrable martingale.  The same estimates allow us to write
\begin{multline}\label{eq_ffff}
\E|Z_n(t)Y_{n,t}(x)-Z_0(t)Y_{0,t}(x)|^2\exp\left\{-K\int_0^t\sum_{k=1}^m\left|\nabla\sigma_{0,k}(s,\varphi_{0,s}(x))\right|^2ds\right\}\leq \\I+II,
\end{multline}
where
\begin{multline}
I=\E\int_0^t\exp\left\{-K\sum_{k=1}^m\int_0^s\left|\nabla\sigma_{0,k}(u,\varphi_{0,u}(x))\right|^2du\right\}\times\\
\sum_{k=1}^m\left|Z_n(s)\nabla\sigma_{n,k}(s,\varphi_{n,s}(x))Z_n^{-1}(s)-Z_0(s)\nabla\sigma_{0,k}(s,\varphi_{0,s}(x))Z_0^{-1}\right|^2|Z_n(s)Y_{n,s}(x)|^2ds,\\
II=\E\int_0^t\exp\left\{-K\sum_{k=1}^m\int_0^s\left|\nabla\sigma_{0,k}(u,\varphi_{0,u}(x))\right|^2du\right\}\times\\
\sum_{k=1}^m\Big[\big(|Z_0(s)\nabla\sigma_{0,k}(s,\varphi_{0,s}(x))Z_0^{-1}(s)|^2-
K|\nabla\sigma_{0,k}(s,\varphi_{0,s}(x))|^2\big)\times\\
|Z_n(s)Y_{n,s}(x)-Z_0(s)Y_{0,s}(x)|^2\Big]ds.
\end{multline}

It follows from the estimates (\ref{eq_moment_Z_n}), (\ref{eq_moment_Z_1}) that for large enough $K$, $II\leq0$.

Consider $I$. First using Proposition \ref{Proposition_Kulik} let us show that for $1\leq k\leq m$, $s\geq 0$, and $x\in\R$, $\nabla\sigma_{n,k}(s,\varphi_{n,s}(x))\to
\nabla\sigma_{0,k}(s,\varphi_{0,s}(x))$, $n\to\infty$, in probability. Fix $s\geq 0$,  $x\in\R$, and $1\leq k\leq m$. In the conditions of Proposition \ref{Proposition_Kulik} we put  $\xi_n=\varphi_{n,s}(x)$, $n\geq0$. Lemma \ref{Lemma_Converg_Solutions} entails the convergence $\xi_n\to\xi_0$, $n\to\infty$, in probability.
Put $X=\R$, $Y=\R\times\R$, $\nu(dx)= C\frac{dx}{1+|x|^{d+1}}$, where $C$ is a constant such that $\nu$ is a probability measure on $\R$.
For fixed $s,x$, and $k$ put $h_n=\nabla\sigma_{n,k}(s,\cdot)$, $h_0=\nabla\sigma_{0,k}(s,\cdot)$. Since for each $s\in [0,T]$, $\nabla\sigma_{n,k}(s,\cdot)\to\nabla\sigma_{0,k}(s,\cdot)$, $n\to\infty$, in $L_2(\R)$ we can
suppose without lost of generality that  $\nabla\sigma_{n,k}(s,y)\to\nabla\sigma_{0,k}(s,y)$, $n\to\infty$, for  each $1\leq k\leq m$ and almost all $s\in[0,T]$, $y\in\R$, with respect to the Lebesgue measure. Then for almost all $s\in[0,T]$, $h_n\to h_0$, $n\to\infty$, in the measure $\nu$.
Notice that the processes $(\varphi_{n,t}(x))_{t\geq0}$, $n\geq0$, possess  transition probability densities. Thus the distributions $P_{\xi_n}$, $n\geq0$, are absolutely continuous w.r.t. the Lebesgue measure on $\R$ and, consequently, w.r.t. the measure $\nu$.  Making use of the estimates \eqref{eq_gaussian_estimates} it is easily seen that the sequence of densities $\left\{\frac{dP_{\xi_n}}{d\nu}:n\geq 1\right\}$ is uniformly integrable w.r.t. the measure $\nu$. Therefore, all the assumptions of Proposition \ref{Proposition_Kulik} are fulfilled, and for almost all $s\in[0,T]$, and all $x\in\R$,
\begin{equation}\label{eq_nabla_sigma_converg}
\nabla\sigma_{n,k}(s,\varphi_{n,s}(x))\to
\nabla\sigma_{0,k}(s,\varphi_{0,s}(x)), \ n\to\infty, \ \mbox{in probability } \mathds{P}.
\end{equation}

Let us return to $I$. We have
\begin{multline*}
\left|Z_n(s)\nabla\sigma_{n,k}(s,\varphi_{n,s}(x))Z_n^{-1}(s)-Z_0(s)\nabla\sigma_{0,k}(s,\varphi_{0,s}(x))Z_0^{-1}(s)\right|\leq\\
|Z_n(s)\nabla\sigma_{n,k}(s,\varphi_{n,s}(x))||Z_n^{-1}(s)-Z_0^{-1}(s)|+\\
|Z_n(s)||\nabla\sigma_{n,k}(s,\varphi_{n,s}(x))-\nabla\sigma_{0,k}(s,\varphi_{0,s}(x))||Z_0^{-1}(s)|+\\
|Z_n(s)-Z_0(s)||\nabla\sigma_{0,k}(s,\varphi_{0,s}(x))Z_0^{-1}(s)|.
\end{multline*}
Making use the H\"older inequality as it was done in (\ref{eq_Holder_h}), taking into account the estimates (\ref{eq_moment_Z_n}), (\ref{eq_moment_Z_1}) and the relations (\ref{eq_converg_Z_n_Z_0_Z_n_1_Z_0_1}), (\ref{eq_nabla_sigma_converg}), we get that the first expectation in the right-hand side of (\ref{eq_ffff}) tends to $0$ as $n\to\infty$. Thus we obtained that
\begin{equation*}
\sup_{t\in[0,T]}|Z_n(t)Y_{n,t}(x)-Z_0(t)Y_{0,t}(x)|\to 0, \ n\to\infty, \ \mbox{in probability } \mathds{P}.
\end{equation*}
Now the assertion of the lemma can be deduce from the inequality
$$
|Y_{n,t}(x)-Y_{0,t}(x)|\leq |Z_n^{-1}(t)||Z_n(t)Y_{n,t}(x)-Z_0(t)Y_0(t)|+
|Z_n^{-1}(t)-Z_0^{-1}(t)||Z_0(t)Y_0(t)|
$$
using standard arguments for the proof of uniform convergence. Lemma \ref{Lemma_Converg_Derivatives_2} is proved.
\end{proof}
Making use of
Lemma \ref{Lemma_Converg_Solutions} and the dominated convergence
theorem, for each $T>0$, $p\geq 1$, we get the relation
$$
\mathds{E}\sup_{0\leq t\leq T}\int_U|\varphi_{n,t}(x)-\varphi_t(x)|^pdx\to 0 , \
n\to \infty,
$$
valid for any bounded domain $U\subset\R$.
Then there exists a
subsequence $\{n_k: \ k\geq 1\}$ such that
$$
\sup_{0\leq t\leq T}\int_U|\varphi_{n_k,t}(x)-\varphi_{t}(x)|^pdx\to 0 \
\mbox{a.s. as} \ k\to \infty.
$$
Without loss of generality we can suppose that
\begin{equation}\label{eq_convergence_solutions}
\sup_{0\leq t\leq T}\int_U|\varphi_{n,t}(x)-\varphi_{t}(x)|^pdx\to 0 \ \mbox{a.s.
as}\ n\to \infty.
\end{equation}

It follows from Lemma
\ref{Lemma_Converg_Derivatives_2} in the similar way that for each $T>0$, $p\geq0$,
\begin{equation}\label{eq_convergence_derivatives}
\sup_{0\leq t\leq T}\int_U|Y_{n,t}(x)-Y_{t}(x)|^pdx\to 0, \ n\to\infty, \
\mbox{almost surely}.
\end{equation}

Since the Sobolev space is a Banach space,  the relations
(\ref{eq_convergence_solutions}),
(\ref{eq_convergence_derivatives}) mean that $Y_t(x)$ is the
 matrix
of the Sobolev derivatives of the solution to (\ref{eq_main}) and \eqref{eq_Sobolev_derivative} holds.
\subsection{} We consider the general case making use of localization.  Let the coefficients of equation (\ref{eq_main}) satisfy the assumptions of Theorem \ref{Theorem_main}.
Let the functions $\beta, \gamma \in C^1(\R)$ be such that $|\beta(x)|\leq 1$; $\beta(x)=1$, if $|x|\leq 2$; $\beta(x)=0$, if $|x|>3$; $|\gamma(x)|\leq 1$; $\gamma(x)=0$, if $|x|\leq 1$; $\gamma(x)=1$, if $|x|>3/2$.  For $R>1$, put $\beta_R(x)=\beta({x}/{R})$, $\gamma_R(x)=\gamma({x}/{R})$. Consider the SDE
\begin{equation}\label{eq_axilary}
\left\{
\begin{aligned}
d\vp_{R,t}(x)&=a(t,\vp_{R,t}(x))\beta_R(\vp_{R,t}(x))dt+\\ &\sum_{k=1}^{m}\sigma_{k}(t,\vp_{R,t}(x))\beta_R(\vp_{R,t}(x))dw_{k}(t)+
\sum_{j=1}^{m}\widetilde{\sigma}_{j}\gamma_R(\vp_{R,t}(x))d\widetilde w_{j}(t),\\
\vp_{R,0}(x)&=x,
\end{aligned}\right.
\end{equation}
where $\widetilde\sigma$ is a $d\times m$ constant matrix such that $\widetilde\sigma\widetilde\sigma^\ast>0$; $(\widetilde w(t))_{t\geq0}=(\widetilde w_1(t),\dots,\widetilde w_m(t))_{t\geq0}$ is an $m$-dimensional Wiener process independent of  $(w(t))_{t\geq 0}$.

Similarly to Lemma \ref{Lemma_Converg_Solutions}, for each $x\in\R$, we get
$$
\sup_{R>1}\E\big(|\vp_{R,t}(x)|^p+|\vp_t(x)|^p\big)<\infty.
$$
Note that $\varphi_{R,t}(x)$ coincides with $\varphi_{t}(x)$ for $t\leq \tau_R$, where $\tau_R=\inf\{s\geq0: \varphi_s(x)\geq R\}$.
Then from the boundedness of the coefficients of (\ref{eq_main})  we obtain that for all $x\in\R$,
$$
\P\{\sup_{0\leq t\leq T}|\varphi_{R,t}(x)-\varphi_t(x)|>\varepsilon\}\leq \P\{\sup_{0\leq t\leq T}|\varphi_t(x)|>R\}\to 0, \ R\to\infty.
$$
It is not difficult,   by analogy to (\ref{eq_convergence_solutions}), to arrive at the relation
\begin{equation}\label{eq_jjjj}
\sup_{0\leq t\leq T}\int_U|\varphi_{R_k,t}(x)-\varphi_{t}(x)|^pdx\to 0 \ \mbox{almost surely
as} \ k\to\infty
\end{equation}
valid for all $x\in\R$, $p\geq 1$, and a sequence $\{R_k:k\geq 1\}$ such that $R_k\to\infty$, $k\to\infty$.
It follows from Lemma \ref{Lemma_Converg_Derivatives} that for all $x\in\R$,
\begin{equation}\label{eq_finiteness_Y_R}
\sup_{R>1}\E\big(\sup_{0\leq t\leq T}(|Y_{R,t}(x)|^p+|Y_t(x)|^p)\big)<\infty.
\end{equation}
According to subsection 3.1, for each $k\geq 1$ there exists the derivative $\nabla\vp_{R_k,t}(x)$ which, for almost all $x\in\R$, is equal to the solution of the equation
\begin{multline}\label{eq_derivative_3}
Y_{R_k,t}(x)=E+\int_0^t \beta_{R_k}(\vp_{R_k,s}(x))dA_{R_k,s}(\varphi_{R_k}(x))Y_{R_k,s}(x)+\\
\int_0^t \nabla\beta_{R_k}(\vp_{R_k,s}(x))a(s,\vp_{R_k,s}(x))Y_{R_k,s}(x)ds+\\
\sum_{k=1}^{m}\int_0^t\nabla\sigma_{k}(s,\vp_{R_k,s}(x))\beta_{R_k}(\vp_{R_k,s}(x))Y_{R_k,s}(x)dw_{k}(s)+\\
\sum_{k=1}^{m}\int_0^t\sigma_{k}(s,\vp_{R_k,s}(x))\nabla\beta_{R_k}(\vp_{R_k,s}(x))Y_{R_k,s}(x)dw_{k}(s)+\\
\sum_{j=1}^{m}\int_0^t\widetilde{\sigma}_{j}\nabla\gamma_{R_k}(\vp_{R_k,s}(x))d\widetilde w_{j}(s).
\end{multline}
Note that $A_{R_k,t}(\vp_{R_k}(x))=A_{t}(\vp(x))$, for $t\leq \tau_{R_k}$, where $\tau_{R_k}=\inf\{t:\vp_{t}(x)\geq R_k\}$. Therefore, equation (\ref{eq_derivative_3}) coincides with equation (\ref{eq_derivative_main}) for $t\leq \tau_{R_k}$, $k\geq 1$.
As $\tau_{R_k}\to\infty$, $k\to\infty$, we deduce that
\begin{equation}\label{eq_jjjjj}
\sup_{0\leq t\leq T}\int_U|Y_{R_k,t}(x)-Y_{t}(x)|^pdx\to 0, \ k\to\infty, \
\mbox{almost surely},
\end{equation}
for $T>0$, any bounded domain $U\subset\R$, and a sequence $\{R_k: k\geq 1\}$ such that $R_k\to \infty$ as $k\to\infty$.
From (\ref{eq_jjjj}) and (\ref{eq_jjjjj}) we get that $Y_t(x)=\nabla\vp_t(x)$, $t\geq 0$, for $\lambda$-a.a. $x\in\R$, almost surely.

Let us verify \eqref{eq_derivative_main2}. Given $R>1$, the coefficients of equation \eqref{eq_axilary} satisfy all the localizing conditions imposed on the coefficients  of equation \eqref{eq_main} in Subsection \ref{subsction_compact_case}. Denote by $\varphi_{n,t}^R$, $n\geq 1$,  a solution to equation of the form \eqref{eq_axilary} with smooth coefficients such that for $p\geq1$, $T>0$, and $x\in\R$,
\begin{equation}\label{eq_star1}
\E\left(\sup_{0\leq t\leq T}|\varphi_{n,t}^R(x)-\varphi_{R,t}(x)|^p\right)\to 0, \ n\to\infty,
\end{equation}
\begin{equation}\label{eq_star2}
\E\left(\sup_{0\leq t\leq T}|Y_{n,t}^R(x)-Y_{R,t}(x)|^p\right)\to 0, \ n\to\infty.
\end{equation}
Then  for all $x,h \in \R$, $v\in \mathds R$,
$$
\vf_{n,t}^R(x+v h)=\vf_{n,t}^R(x)+h\int_0^v Y_{n,t}^R(x+uh)du.
$$
This equation, \eqref{eq_star1},  \eqref{eq_star2}, and Lemma \ref{Lemma_Converg_Derivatives} imply that
for all $x,h\in\R, v\in\mathds{R}$, and $R>1$,
$$
\vf_{R,t}(x+v h)=\vf_{R,t}(x)+h\int_0^v Y_{R,t}(x+uh)du.
$$

By  \eqref{eq_jjjj}, \eqref{eq_finiteness_Y_R}, and \eqref{eq_jjjjj} we get the equality
\begin{equation}\label{eq_NL}
\vf_{t}(x+v h)=\vf_{t}(x)+h\int_0^v Y_{t}(x+uh)du
\end{equation}
valid for all $x,h\in\R, v\in\mathds{R}$, and $R>1$,
To obtain \eqref{eq_derivative_main2} it remains to prove the $L_p$-continuity of $Y_t(x)$  w.r.t. $x$. Note that Lemma \ref{Lemma_converg_W_functionals} implies the convergence
\begin{equation*}
A_t(\vp(x))\to A_t(\vp(x_0)), \ x\to x_0, \ \mbox{in probabillty}.
\end{equation*}
Then
\begin{equation}\label{eq_rrr}
Y_t(x)\to Y_t(x_0), \ x\to x_0, \ \mbox{in probability}.
\end{equation}
This together with Lemma \ref{Lemma_Converg_Derivatives} entails convergence in $L_p$, $p>0$.
Now \eqref{eq_derivative_main2} follows from \eqref{eq_NL} and \eqref{eq_rrr}.
This completes the proof of Theorem \ref{Theorem_main}.

\section{Appendix. Convergence of transition probability densities}\label{Section_Conv_Densities}
In this section we prove the convergence of the transition probability densities of the processes $(\varphi_{n,t})_{t\geq0}$, $n\geq1$,   to that of the process  $(\varphi_t)_{t\geq0}$ (Lemma \ref{Lemma_Converg_Densities}, see below), which entails the convergence of characteristics of W-functionals (Lemma  \ref{Lemma_Converg_Character}, see below).   The latter result  is the basis of  the proof of Lemma \ref{Lemma_converg_W_functionals}. We make use of the parametrix method considering the transition probability densities of the processes with $a_n\equiv0$, $n\geq1$, as the initial ones.

Suppose that $\sigma$ satisfies the conditions of Theorem \ref{Theorem_main} and $\sigma(t,x)=\widetilde{\sigma}=const$ for $t\geq0$,  $x\in\R$ such that $|x|\geq R$, $\widetilde{\sigma}\widetilde{\sigma}^\ast>0$. Let $\sigma_n, n\geq1,$ be defined by equation \eqref{eq_a_n}. Then $\sigma_n\to \sigma, n\to \infty,$ uniformly in $(t,x)\in [0,T]\times\R$. Recall that we can assume that $\sigma_n(t,x)=\widetilde{\sigma}$ for all $n\geq 1$, $t\geq0$,  and $x\in\R$ such that $|x|\geq R$.

Denote  $\sigma_0=\sigma$, $\varphi_0=\varphi$, and for $n\geq 0$ put
$$
b_n=\sigma_n\sigma^\ast_n.
$$
Then  $b_n\to b_0$, $n\to\infty,$ uniformly in $(t,x)\in[0,T]\times\R$, $T>0$.

Consider the parabolic equation
$$
\frac{\partial u_n(s,x)}{\partial s}+\frac12\sum_{i,j=1}^db_n^{ij}(s,x)\frac{\prt^2 u_n(s,x)}{\prt x_i\prt x_j}=0, \ n\geq 0.
$$
It is well known that the H\"older continuity and uniform ellipticity of $b_n$ provide the existence of a fundamental solution (e.g., \cite{Ladyzhenskaya+67}, Ch. IV, \S 11), which we denote by $g_n(s,x,t,y)$ (recall that now $a_n\equiv 0$). The function $g_n(s,x,t,y)$, $0\leq s<t\leq T, \ x\in\R, \ y\in\R$, is the transition probability density of the diffusion process which is a solution of the SDE
$$
x_n(t)=x_n(s)+\sum_{k=1}^m\int_s^t\sigma_{n,k}(u,x_n(u))dw_k(u).
$$
By \cite{Portenko95}, Ch.II, Lemma 3,
\begin{eqnarray}
g_n(s,x,t,y)&\to &g_0(s,x,t,y), \ n\to\infty, \label{eq_g_n_to_g_0}\\
\frac{\prt g_n(s,x,t,y)}{\prt x_i}&\to&\frac{\prt g_0(s,x,t,y)}{\prt x_i}, \ 1\leq i\leq d, \ n\to\infty,\label{eq_g_n_to_g_01}
\end{eqnarray}
 uniformly in every domain
$$
\mathcal{D}_{\delta}^T=\{(s,x,t,y):0\leq s<t\leq T, x\in\R, y\in\R, t-s+|x-y|\geq\delta\},
$$
for any fixed $\delta>0, T>0$.

Furthermore, for $0\leq s<t\leq T$, $x\in\R$, $y\in\R$, the estimates
\begin{equation}\label{eq_g_n_estimate}
|\nabla_x^l g_n(s,x,t,y)|\leq C(t-s)^{-\frac{d+l}{2}}\exp\left\{-c\frac{|y-x|^2}{t-s}\right\}
\end{equation}
hold true. Here $n\geq 0$, $l=0,1,2$, $C,c$ are positive constants which depend only on $d, T$ and $\|b_0\|_{T,\infty}$.

Now let $a$ satisfy the condition of Theorem \ref{Theorem_main}, and $a(t,x)=0$ for $t\geq 0, |x|>R$. Put $a_0=a$, and $\varphi_{0,t}(x)=\varphi_{t}(x)$, $t\geq 0, x\in\R$, where $\varphi_t(x)$ is a solution of equation \eqref{eq_main}. Let for $n\geq 1$, $a_n$ be defined by \eqref{eq_a_n}, and $\varphi_{n,t}(x)$ be a solution of equation \eqref{eq_main_n}.
Denote by $G_n(s,x,t,y), \ n\geq0$, the transition probability density of the process $(\varphi_{n,t})_{t\geq0}$. Then $G_n(s,x,t,y)$ can be constructed by the perturbation method (see \cite{Portenko90}, Ch. 2) as a solution of the integral equation:
\begin{equation}\label{eq_G_n}
G_n(s,x,t,y)=g_n(s,x,t,y)+\int_s^td\tau\int_{\R}g_n(s,x,\tau,z)\left(\nabla_z G_n(\tau,z,t,y),a_n(\tau,z)\right) dz,
\end{equation}
which satisfies the estimate
\begin{equation}\label{eq_estimate_G_n}
|\nabla_x^l G_n(s,x,t,y)|\leq C^\prime(t-s)^{-\frac{d+l}{2}}\exp\left\{-c^\prime\frac{|y-x|^2}{t-s}\right\}
\end{equation}
in any domain $0\leq s<t\leq T$, $x\in\R$, $y\in\R$, for $n\geq 0, \ l=0,1$. The constants $C^\prime$, $c^\prime$ can be chosen uniformly in $n$.

It follows from \cite{Portenko90}, Theorem 2.1, that for $n\geq1$, the function $G_n(s,x,t,y)$ is a fundamental solution of the parabolic equation
$$
\frac{\partial u_n(s,x)}{\partial s}+\frac12\sum_{i,j=1}^db_n^{ij}(s,x)\frac{\prt^2 u_n(s,x)}{\prt x_i\prt x_j}+\sum_{i=1}^d a^i_n(s,x)\frac{\prt u_n(s,x)}{\prt x_i}=0.
$$
\begin{Remark}\label{Remark_uniform_cont}
Following the construction of $G_0(s,x,t,y)$ in \cite{Portenko90} one can observe that $G_0(s,x,t,y)$ is uniformly continuous in $y$ uniformly on $|t-s|>\delta$, $x\in\R$, $\delta>0$.
\end{Remark}

\begin{Lemma}\label{Lemma_Converg_Densities}
$G_n(s,x,t,y)\to G_0(s,x,t,y)$, $n\to\infty$, uniformly on $\mathcal{D}_{\delta}^T$
for any fixed $\delta>0, T>0$.
\end{Lemma}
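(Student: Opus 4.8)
The plan is to turn the integral equation \eqref{eq_G_n} into a convergent Neumann series whose building blocks are exactly the quantities $g_n$, $\nabla g_n$ and $a_n$ that are already known to converge, and then to pass to the limit term by term. Since \eqref{eq_G_n} contains the gradient $\nabla_z G_n$, I would first differentiate it in $x$ to obtain a \emph{closed} equation for $H_n:=\nabla_x G_n$. Writing $\nabla_z G_n(\tau,z,t,y)=H_n(\tau,z,t,y)$, this reads
\begin{equation*}
H_n(s,x,t,y)=\nabla_x g_n(s,x,t,y)+\int_s^t d\tau\int_{\R}\nabla_x g_n(s,x,\tau,z)\bigl(H_n(\tau,z,t,y),a_n(\tau,z)\bigr)\,dz,
\end{equation*}
a linear Volterra equation with free term $\nabla_x g_n$ and kernel built from $\nabla_x g_n$ and $a_n$. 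Iterating it gives $H_n=\sum_{k\geq0}H_n^{(k)}$, where $H_n^{(0)}=\nabla_x g_n$ and $H_n^{(k)}$ is the $k$-fold space--time convolution of the kernel against $H_n^{(0)}$.

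The first task is to control this series uniformly in $n$. The gradient bound in \eqref{eq_g_n_estimate} gives $|\nabla_x g_n(s,x,t,y)|\le C(t-s)^{-(d+1)/2}\exp\{-c|y-x|^2/(t-s)\}$ with constants independent of $n$; after integrating one spatial variable against the Gaussian this singularity is only of order $(t-s)^{-1/2}$, so the convolutions produce the standard Beta-function iteration $\int_s^t(\tau-s)^{-1/2}(t-\tau)^{-1/2}d\tau=B(1/2,1/2)$ and each step gains a half power of the time increment. Because $\|a_n\|_{T,\infty}\le\|a\|_{T,\infty}$ and all constants in \eqref{eq_g_n_estimate} are $n$-uniform (Remark \ref{Remark_uniform_Gaussian_estimates}), one obtains a bound of the type $|H_n^{(k)}|\le \frac{C^{k+1}}{\Gamma((k+1)/2)}(t-s)^{-(d+1)/2+k/2}\exp\{-c|y-x|^2/(t-s)\}$, whence the series converges absolutely with a tail that is small uniformly in $n$ and in $(s,x,t,y)\in\mathcal{D}_{\delta}^T$.

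The second task is the convergence of a single term $H_n^{(k)}\to H_0^{(k)}$ uniformly on $\mathcal{D}_{\delta}^T$. Here I would use that $g_n\to g_0$ and $\nabla_x g_n\to\nabla_x g_0$ uniformly on every $\mathcal{D}_{\delta'}^T$ by \eqref{eq_g_n_to_g_0}--\eqref{eq_g_n_to_g_01}, together with $a_n\to a$ a.e.\ and in $L_1$ with $\sup_n\|a_n\|_{T,\infty}<\infty$. Each $H_n^{(k)}$ is an integral whose integrand converges a.e.\ to that of $H_0^{(k)}$ and is dominated by the product of the $n$-uniform Gaussian bounds; dominated convergence gives pointwise convergence, and to upgrade it to \emph{uniform} convergence I would split the integration region into a part near the kernel singularities (where $\tau\approx s,\ z\approx x$ and $\tau\approx t,\ z\approx y$) and its complement. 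On the complement the uniform convergence of $\nabla g_n$ and the $L_1$-convergence of $a_n$ transfer directly; the singular part is made arbitrarily small, uniformly in $n$ and in the external variables, by the integrability of the dominating Gaussian kernel against the bounded $a_n$. Summing the finitely many leading terms (which converge uniformly) against the uniformly small tail yields $H_n\to H_0$ uniformly on $\mathcal{D}_{\delta}^T$. Finally, feeding $\nabla_z G_n=H_n$ back into \eqref{eq_G_n} and applying the same dominated-convergence and splitting argument to the single convolution $\int_s^t d\tau\int_{\R} g_n(s,x,\tau,z)\bigl(H_n(\tau,z,t,y),a_n(\tau,z)\bigr)\,dz$ gives $G_n\to G_0$ uniformly on $\mathcal{D}_{\delta}^T$.

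I expect the main obstacle to be precisely the passage from pointwise to uniform convergence through the singularity of $\nabla_x g_n$: the drift factors $a_n$ converge only almost everywhere and in $L_1$, not uniformly, so near the diagonal one cannot rely on convergence of $a_n$ and must instead absorb the whole contribution into the integrable, $n$-independent Gaussian bound of \eqref{eq_g_n_estimate}. The uniform-in-$n$ character of those estimates, guaranteed by Remark \ref{Remark_uniform_Gaussian_estimates}, is exactly what makes both the uniform summability of the Neumann series and the uniform smallness of the singular region possible.
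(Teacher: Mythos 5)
Your proposal is correct and rests on the same pillars as the paper's proof --- the integral equation \eqref{eq_G_n} differentiated in $x$, the $n$-uniform Gaussian bounds \eqref{eq_g_n_estimate}, \eqref{eq_estimate_G_n}, the convergences \eqref{eq_g_n_to_g_0}--\eqref{eq_g_n_to_g_01}, Gamma-factor decay of the iterates, and a final feed-back into \eqref{eq_G_n} --- but it organizes the iteration differently, and the difference is worth noting. You expand $H_n=\nabla_x G_n$ itself in a Neumann series and pass to the limit term by term, which forces you to confront, at \emph{every} order $k$, the fact that $a_n\to a_0$ only a.e.\ and $\nabla_x g_n\to\nabla_x g_0$ only away from the diagonal; your splitting of the integration region into a singular part (absorbed into the $n$-uniform Gaussian bound) and its complement is the right fix, but it has to be rerun inductively through all $H_n^{(k)}$ and is only sketched. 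The paper instead subtracts first: it sets $U_n=\nabla_x G_n-\nabla_x G_0$ and derives the perturbed equation $U_n=\mathcal{A}_nU_n+r_n$ of \eqref{eq_U_n}, so that all the $n$-dependence that converges is concentrated in the single forcing term $r_n=I_n^1+I_n^2+I_n^3$; crucially, it handles the drift by H\"older's inequality with a fixed $p>d+2$ (as in \eqref{eq_III}), so the a.e.\ convergence of $a_n$ enters only through the norm $\|a_n-a_0\|_{p,T}\to 0$ (available since the $a_n$ are uniformly bounded with common compact support), and the near-diagonal difficulty never reappears after the estimate of $r_n$. Your $L_\infty$-based Beta-function iteration, gaining $(t-s)^{1/2}$ per step, is more elementary than the paper's $L_p$ scheme, which gains only $\gamma=\frac{p-d-2}{2p}$ per step but yields estimates quantified directly by $\|a_n-a_0\|_{p,T}$ and by the $L_q$-convergence of $\nabla_x g_n$ cited from Portenko; in exchange, the paper's route makes the uniformity on $\mathcal{D}_\delta^T$ essentially automatic, while yours trades norm machinery for a repeated hands-on splitting argument. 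Both are valid, and your closing observation --- that the whole scheme hinges on absorbing the non-uniformly-convergent drift into the $n$-independent integrable Gaussian kernel --- is exactly the mechanism the paper's $I_n^2$ and $I_n^3$ estimates implement.
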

\begin{proof}
We use the idea of the proof from \cite{Portenko90}, Lemma 2.6.

Denote
\begin{equation}\label{eq_U_n_1}
U_n(s,x,t,y)=\nabla_x G_n(s,x,t,y)-\nabla_x G_0(s,x,t,y).
\end{equation}
Equation (\ref{eq_G_n}) entails the relation
\begin{multline*}
U_n(s,x,t,y)=\nabla_xg_n(s,x,t,y)-\nabla_xg_0(s,x,t,y)+\\
\int_s^td\tau\int_{\R}\nabla_xg_n(s,x,\tau,z)\left(\nabla_zG_n(\tau,z,t,y),a_n(\tau,z)\right) dz-\\
\int_s^td\tau\int_{\R}\nabla_xg_0(s,x,\tau,z)\left(\nabla_zG_0(\tau,z,t,y),a_0(\tau,z)\right) dz=\\
\nabla_xg_n(s,x,t,y)-\nabla_xg_0(s,x,t,y)+\\
\int_s^td\tau\int_{\R}\nabla_xg_0(s,x,\tau,z)\left(U_n(\tau,z,t,y),a_n(\tau,z)\right) dz+\\
\int_s^td\tau\int_{\R}(\nabla_xg_n(s,x,\tau,z)-\nabla_xg_0(s,x,\tau,z))\left(\nabla_zG_n(\tau,z,t,y),a_n(\tau,z)\right) dz+\\
\int_s^t d\tau\int_{\R}\nabla_xg_0(s,x,\tau,z)\left(\nabla_zG_0(\tau,z,t,y),a_n(\tau,z)-a_0(\tau,z)\right) dz.
\end{multline*}
So
\begin{equation}\label{eq_U_n}
U_n(s,x,t,y)=\mathcal{A}_nU_n(s,x,t,y)+r_n(s,x,t,y),
\end{equation}
where
$$
\mathcal{A}_nU_n(s,x,t,y)=\int_s^td\tau\int_{\R}\nabla_xg_0(s,x,\tau,z)\left(U_n(\tau,z,t,y),a_n(\tau,z)\right) dz,
$$
$$
r_n(s,x,t,y)=\sum_{k=1}^3 I_n^k(s,x,t,y),
$$
\begin{eqnarray*}
I_n^1(s,x,t,y)=&\nabla_xg_n(s,x,t,y)-\nabla_xg_0(s,x,t,y),\\
I_n^2(s,x,t,y)=&\int_s^td\tau\int_{\R}(\nabla_xg_n(s,x,\tau,z)-\nabla_xg_0(s,x,\tau,z))\left(\nabla_zG_n(\tau,z,t,y),a_n(\tau,z)\right) dz,\\
I_n^3(s,x,t,y)=&\int_s^t d\tau\int_{\R}\nabla_xg_0(s,x,\tau,z)\left(\nabla_zG_0(\tau,z,t,y),a_n(\tau,z)-a_0(\tau,z)\right) dz.
\end{eqnarray*}
Recall that $a_n(t,x), \ n\geq0$, are bounded measurable and they have  compact supports in $x$. So $a_n\in L_p([0,T]\times\R)$ for all $T>0, p>0, n\geq0$. Fix $p>d+2$. Making use of the H\"older inequality and the estimate \eqref{eq_estimate_G_n} we have
\begin{multline}\label{eq_I_n_2}
I_n^2(s,x,t,y)\leq\\
\int_s^td\tau\int_{\R}|\nabla_xg_n(s,x,\tau,z)-\nabla_xg_0(s,x,\tau,z)|\left|\nabla_zG_n(\tau,z,t,y)\right| |a_n(\tau,z)| dz\leq\\
K\left(\int_s^td\tau\int_{\R}|\nabla_xg_n(s,x,\tau,z)-\nabla_xg_0(s,x,\tau,z)|^q\times\right.\\
\left.(t-\tau)^{-\frac{d+1}{2}q}\exp\left\{-cq\frac{|y-z|^2}{t-\tau}\right\}dz\right)^{1/q}
\left(\int_s^td\tau\int_{\R}|a_n(\tau,z)|^p dz\right)^{1/p},
\end{multline}
where $K,c$ are positive constants, $1/p+1/q=1$. It follows from (\ref{eq_g_n_to_g_01}) and \cite{Portenko95}, Ch. II, Lemma 2 that $I_n^2(s,x,t,y)\to 0, \ n\to\infty$, uniformly on $\mathcal D_\delta^T$ for any $\delta>0, \ T>0$.
The  relation (\ref{eq_g_n_to_g_01}) gives also that $I_n^1(s,x,t,y)\to 0$, $n\to\infty$, uniformly on $\mathcal D_\delta^T$. Consider $I_n^3(s,x,t,y)$. We have
\begin{multline}\label{eq_III}
I_n^3(s,x,t,y)\leq\int_s^td\tau\int_{\R}|\nabla_xg_0(s,x,\tau,z)||\nabla_zG_0(\tau,z,t,y)||a_n(\tau,z)-a_0(\tau,z)| dz\leq\\
K\left(\int_s^td\tau\int_{\R}|a_n(\tau,z)-a_0(\tau,z)|^p dz\right)^{1/p}\times\\
\left(\int_s^td\tau\int_{\R}(\tau-s)^{-\frac{d+1}{2}q}\exp\left\{-cq\frac{|z-x|^2}{\tau-s}\right\}(t-\tau)^{-\frac{d+1}{2}q}\exp\left\{-cq\frac{|y-z|^2}{t-\tau}\right\} dz\right)^{1/q}=\\
K^\prime\|a_n-a_0\|_{p,T}(t-s)^{-\frac{d+1}{2}+\gamma}\exp\left\{-c\frac{|y-x|^2}{t-s}\right\},
\end{multline}
where $K^\prime$ is a constant, $\gamma=\frac{p-d-2}{2p}$, $p>d+2$, $\|a\|_{p,T}=\|a\|_{L_p([0,T]\times\R)}$. For the proof of the last equality in \eqref{eq_III} see, e.g.,
\cite{Friedman64}, Ch.1, \S 4, Lemma 3. Then $I_n^3(s,x,t,y)\to 0$, $n\to\infty$, uniformly on $\mathcal D_\delta^T$. Thus we conclude that
$$
r_n(s,x,t,y)\to 0, \ n\to\infty, \ \mbox{uniformly on} \ \mathcal{D}_{\delta}^T \ \mbox{for any } \delta>0, \ T>0.
$$
Moreover, from (\ref{eq_g_n_estimate}), (\ref{eq_I_n_2}), and  (\ref{eq_III}) we obtain the following estimate
\begin{equation}\label{eq_estimate_r_n}
|r_n(s,x,t,y)|\leq H(t-s)^{-\frac{d+1}{2}}\exp\left\{-c\frac{|y-x|^2}{t-s}\right\}
\end{equation}
valid in every domain of the form $0\leq s<t\leq T$, $x,y\in\R$. Here $H$ is a positive constant. We obtain the above inequality for $I_n^2$ in the way similar to that for $I_n^3$.

By (\ref{eq_estimate_G_n}) and  \eqref{eq_U_n_1} for all $0\leq s<t\leq T$, $x,y\in\R$,
\begin{equation}
|U_n(s,x,t,y)|\leq H^\prime(t-s)^{-\frac{d+1}{2}}\exp\left\{-c\frac{|y-x|^2}{t-s}\right\},
\end{equation}
where $H^\prime$ is a positive constant.
Denote by $\mathcal A_n^k$ is the $k$-th power of the operator $\mathcal A_n$. Repeating the argument of (\ref{eq_III}), we get
$$
|\mathcal A_n^kU_n(s,x,t,y)|\leq C_k\|a\|^k_{p,T}(t-s)^{-\frac{d+1}{2}+k\gamma}\exp\left\{-c\frac{|y-x|^2}{t-s}\right\},
$$
where
$$
C_k=H^\prime C^k\left(\frac{\pi}{cq}\right)^{\frac{kd}{2q}}\left(\frac{\Gamma(\beta)}{\Gamma((k+1)\beta)}\right)^{1/q}, \ q=\frac{p}{p-1}, \ \gamma=\frac{p-d-2}{2p}, \ \beta=q\gamma.
$$
Here $k=0,1,2,\dots$, $0\leq s<t\leq T$, $x,y\in\R$,  $C$ is a constant from the inequality (\ref{eq_g_n_estimate}).
It follows from these estimates that
\begin{equation}\label{eq_lim_AnUn}
\lim_{k\to\infty}\sup_n\sup_{0\leq s<t\leq T, \ x,y\in\R}|\mathcal A_n^k U_n(s,x,t,y)|=0.
\end{equation}
Using the estimate \eqref{eq_estimate_r_n} and arguing similarly we get for $k=0,1,2,\dots$, that
\begin{equation}\label{eq_estim_r_n}
|\mathcal A_n^kr_n(s,x,t,y)|\leq C_k^\prime\|a\|^k_{p,T}(t-s)^{-\frac{d+1}{2}+k\gamma}\exp\left\{-c\frac{|y-x|^2}{t-s}\right\},
\end{equation}
where
$$
C_k^\prime=H^\prime C^k\left(\frac{\pi}{cq}\right)^{\frac{kd}{2q}}\left(\frac{\Gamma(\beta)}{\Gamma((k+1)\beta)}\right)^{1/q}.
$$
Iterating the relation (\ref{eq_U_n}) and taking into account (\ref{eq_lim_AnUn}) we deduce that
\begin{equation}\label{eq_series_U_n}
U_n(s,x,t,y)=\sum_{k=0}^\infty\mathcal A_n^kr_n(s,x,t,y).
\end{equation}
The estimates (\ref{eq_estim_r_n}) provide the convergence of the series in the right-hand side of (\ref{eq_series_U_n}) uniformly in $n$ on $\mathcal{D}_\delta^T$. To prove that
\begin{equation}\label{eq_U_n_to_0}
\lim_{n\to\infty}U_n(s,x,t,y)=0
\end{equation}
on $\mathcal D_\delta^T$ it is enough to show that $\mathcal A_n^kr_n(s,x,t,y)\to 0$, $n\to\infty$, for every fixed $k=0,1,2,\dots$. This can be easily obtained by induction.

For the difference $G_n-G_0$ from \eqref{eq_G_n} we have
\begin{equation*}
G_n(s,x,t,y)-G_0(s,x,t,y)=\sum_{k=1}^4 H_n^k(s,x,t,y),
\end{equation*}
where
\begin{eqnarray*}
H_n^1(s,x,t,y)&=&g_n(s,x,t,y)-g_0(s,x,t,y),\\
H_n^2(s,x,t,y)&=&\int_s^td\tau\int_{\R}(g_n(s,x,\tau,z)-g_0(s,x,\tau,z))\left(\nabla G_n(\tau,z,t,y),a_n(\tau,z)\right) dz,\\
H_n^3(s,x,t,y)&=&\int_s^td\tau\int_{\R}g_0(s,x,\tau,z)\left(\nabla G_n(\tau,z,t,y)-\nabla G_0(\tau,z,t,y),a_n(\tau,z)\right) dz,\\
H_n^4(s,x,t,y)&=&\int_s^td\tau\int_{\R}g_0(s,x,\tau,z)\nabla G_0(\tau,z,t,y)(a_n(\tau,z)-a_0(\tau,z)) dz.
\end{eqnarray*}
By (\ref{eq_g_n_to_g_0}) we have $H_n^1(s,x,t,y)\to 0, n\to\infty$, uniformly on $\mathcal D_\delta^T$ for any $\delta>0, T>0$. It follows form (\ref{eq_g_n_to_g_0}), (\ref{eq_U_n_to_0}), and the dominated convergence theorem  that $H_n^2(s,x,t,y)\to 0, n\to\infty$, and $H_n^3(s,x,t,y)\to 0, n\to\infty$, uniformly on $\mathcal D_\delta^T$. Finally, $H_n^4$ satisfies the inequality
$$
|H_n^4(s,x,t,y)|\leq K\|a_n-a\|_{p,T}(t-s)^{-\frac d2+\gamma}\exp\left\{-c\frac{|y-x|^2}{t-s}\right\}.
$$
This implies that $H_n^4(s,x,t,y)\to 0$ as $n\to\infty$ uniformly on $\mathcal D_\delta^T$.
The  lemma is proved.
\end{proof}
\begin{Lemma}\label{Lemma_Converg_Character}
Let $\widetilde\nu(dt,dy)=\nu(t,dy)dt$ be a measure of the class $\mathcal{K}$ such that $\supp(\widetilde\nu)\subset[0,T]\times U$ for some $T>0$ and compact set $U\in\R$.
Then
$$
\int_{t_0}^{t_0+t}ds\int_{\R}G_n(t_0,x,s,y)(\nu\ast \omega_n)(s,dy)\to\int_{t_0}^{t_0+t}ds\int_{\R}G_0(t_0,x,s,y)\nu(s,dy), \ n\to\infty,
$$
uniformly on $0\leq t_0<t_0+t\leq T, x\in\R$.
\end{Lemma}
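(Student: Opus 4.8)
The plan is to reduce the statement to the convergence of a single pair of kernels integrated against $\nu$, and then to treat separately the times $s$ near $t_0$ (where the short-time singularity sits and the class~$\mathcal{K}$ hypothesis enters) and the times $s$ bounded away from $t_0$ (where Lemma~\ref{Lemma_Converg_Densities} applies). First I would transfer the mollifier from the measure onto the density. Since $(\nu\ast\omega_n)(s,dy)$ has Lebesgue density $y\mapsto\int_{\R}\omega_n(y-z)\,\nu(s,dz)$, Tonelli's theorem applied to $\nu^{+}$ and $\nu^{-}$ yields
\[
\int_{t_0}^{t_0+t}\!ds\int_{\R}G_n(t_0,x,s,y)(\nu\ast\omega_n)(s,dy)=\int_{t_0}^{t_0+t}\!ds\int_{\R}\widetilde{G}_n(t_0,x,s,z)\,\nu(s,dz),
\]
where $\widetilde{G}_n(t_0,x,s,z):=\int_{\R}G_n(t_0,x,s,y)\omega_n(y-z)\,dy$. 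Thus it suffices to show that $\Delta_n(t_0,x,t):=\int_{t_0}^{t_0+t}ds\int_{\R}(\widetilde{G}_n-G_0)(t_0,x,s,z)\,\nu(s,dz)$ tends to $0$ uniformly, and I would split the $s$-integral at $t_0+(\delta\wedge t)$, with $\delta>0$ fixed first and $n\to\infty$ afterwards.

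On the near-diagonal block $[t_0,t_0+\delta]$ I would estimate $|\widetilde{G}_n-G_0|\le\widetilde{G}_n+G_0$ and integrate against $|\nu|$. The kernel $G_0$ is controlled by the Gaussian bound \eqref{eq_estimate_G_n} (with $l=0$, the constants being uniform in $n$), and for $\widetilde{G}_n$ the substitution $y=z+w$ turns \eqref{eq_estimate_G_n} into a superposition, over $\omega_n(w)\,dw$, of Gaussians centred at $x-w$; interchanging the order of integration and using $\int\omega_n=1$ gives
\[
\int_{t_0}^{t_0+\delta}\!ds\int_{\R}\widetilde{G}_n\,|\nu|(s,dz)\le C'\sup_{t_0,x'}\int_{t_0}^{t_0+\delta}\!ds\int_{\R}\frac{1}{(s-t_0)^{d/2}}\exp\Bigl\{-c'\frac{|z-x'|^2}{s-t_0}\Bigr\}|\nu|(s,dz).
\]
Since $\widetilde\nu$ is of class~$\mathcal{K}$, the right-hand side tends to $0$ as $\delta\downarrow0$ — the Kato condition with the Wiener constant $\tfrac12$ being equivalent to the same condition with any positive Gaussian constant $c'$, as noted after \eqref{eq_gaussian_estimates} — and the bound is uniform in $n$, $t_0$, $x$, $t$; the same holds verbatim for $G_0$. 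I expect this to be the crux: the danger is that mollification could spoil the uniformity in $n$, and it is avoided precisely by shifting the base point $x\mapsto x-w$ (translation invariance of the Gaussian) instead of the observation point, so the convolution never meets the short-time singularity.

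On the far block $s-t_0\ge\delta$ one has $(t_0,x,s,z)\in\mathcal{D}_\delta^{T}$, and I would write $\widetilde{G}_n-G_0=(\widetilde{G}_n-\widetilde{G}_{0,n})+(\widetilde{G}_{0,n}-G_0)$ with $\widetilde{G}_{0,n}(z):=\int G_0(t_0,x,s,y)\omega_n(y-z)\,dy$. The first difference is at most $\sup_{\mathcal{D}_\delta^T}|G_n-G_0|$ (because $\int\omega_n=1$ and $s-t_0\ge\delta$ forces every $(t_0,x,s,z+w)$ into $\mathcal{D}_\delta^T$), which vanishes as $n\to\infty$ by Lemma~\ref{Lemma_Converg_Densities}; the second is at most the oscillation of $G_0(t_0,x,s,\cdot)$ over balls of radius $1/n$, which vanishes uniformly on $\{s-t_0\ge\delta\}$ by the uniform continuity in Remark~\ref{Remark_uniform_cont}. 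Hence $\sup_{s-t_0\ge\delta,\,z}|\widetilde{G}_n-G_0|\to0$, and the far part of $\Delta_n$ is dominated by this supremum times the total mass $\int_0^T\!\int_{\R}|\nu|(s,dz)\,ds$, which is finite because $\widetilde\nu$ is a class-$\mathcal{K}$ measure with compact support (cf. Remark~\ref{Remark_measure_finite}).

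To conclude, given $\varepsilon>0$ I would first choose $\delta$ so small that the near-diagonal contribution is below $\varepsilon$ for all $n$ and all admissible $(t_0,x,t)$, and then choose $n$ large enough that the far contribution is below $\varepsilon$. Both bounds being uniform in $t_0$, $x$ and $t$ subject to $0\le t_0<t_0+t\le T$, this gives the claimed uniform convergence.
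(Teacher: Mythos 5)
Your proposal is correct and takes essentially the same approach as the paper's proof: the same Tonelli transfer of the mollifier onto the density, the same cut of the time integral at $t_0+\delta$ with the class-$\mathcal{K}$ condition and the uniform-in-$n$ Gaussian bounds (including the shift of the Gaussian centre, cf.\ the paper's estimate with $\sup_{\widetilde x}$) handling the near-diagonal part, and the same combination of Lemma~\ref{Lemma_Converg_Densities}, the offset mass bound \eqref{eq_l234}, and the uniform continuity of $G_0$ (Remark~\ref{Remark_uniform_cont}) on the far block. The only difference is organisational: the paper first splits into $I_n^1$ (difference of mollified densities) and $I_n^2$ (mollification error of $G_0$) and then cuts $I_n^1$ in time, whereas you cut in time first and perform the two-term split only on the far block.
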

\begin{proof}
We can write
\begin{multline*}
\left|\int_{t_0}^{t_0+t}ds\int_{\R}G_n(t_0,x,s,y)(\nu\ast \omega_n)(s,dy)-\int_{t_0}^{t_0+t}ds\int_{\R}G_0(t_0,x,s,y)\nu(s,dy)\right|\\
\leq I_n^1(t_0,t,x)+I_n^2(t_0,t,x),
\end{multline*}
where
\begin{eqnarray*}
I_n^1(t_0,t,x)&=&\left|\int_{t_0}^{t_0+t}ds\int_{\R}\Big(\big(G_n(t_0,x,s,\cdot)\ast \omega_n\big)(y)-\big(G_0(t_0,x,s,\cdot)\ast \omega_n\big)(y)\Big)\nu(s,dy)\right|,\\
I_n^2(t_0,t,x)&=&\left|\int_{t_0}^{t_0+t}ds\int_{\R}\Big(\big(G_0(t_0,x,s,\cdot)\ast \omega_n\big)(y)-G_0(t_0,x,s,\cdot)(y)\Big)\nu(s,dy)\right|.
\end{eqnarray*}
For any $\delta>0$ we have
\begin{multline}\label{eq_llll}
I_n^1(t_0,t,x)\leq\int_{t_0}^{t_0+\delta}ds\int_{\R}\big(G_n(t_0,x,s,\cdot)\ast\omega_n\big)(y)\nu(s,dy)+\\
\int_{t_0}^{t_0+\delta}ds\int_{\R}\big(G_0(t_0,x,s,\cdot)\ast\omega_n\big)(y)\nu(s,dy)+\\
\int_{t_0+\delta}^{t_0+t}ds\int_{\R}\big|G_n(t_0,x,s,y)-G_0(t_0,x,s,y)\big|\nu(s,dy).
\end{multline}
The estimates \eqref{eq_gaussian_estimates} entail
\begin{multline}\label{eq_l123}
\int_{t_0}^{t_0+\delta}ds\int_{\R}\big(G_n(t_0,x,s,\cdot)\ast\omega_n\big)(y)\nu(s,dy)\leq\\
\int_{t_0}^{t_0+\delta}ds\int_{\R}\nu(s,dy)\int_{\R}G_n(t_0,x,s,y-z)\omega_n(z)dz\leq\\
C\int_{t_0}^{t_0+\delta}ds\int_{\R}\nu(s,dy)\int_{\R}\exp\left\{-c\frac{|y-(z+x)|^2}{s-t_0}\right\}\omega_n(z)dz\leq\\
C\sup_{\widetilde x\in\R}\int_{t_0}^{t_0+\delta}ds\int_{\R}\exp\left\{-c\frac{|y-\widetilde x|^2}{s-t_0}\right\}\nu(s,dy).
\end{multline}
Because of the condition (\ref{Cond_A_prime}), for each $\varepsilon>0$, we can choose $\delta$ so small that for all $t_0\in[0,T-\delta]$ the right-hand side of (\ref{eq_l123}) does not exceed $\varepsilon/2$.
The same estimate for the second summand in the right-hand side of \eqref{eq_llll} can be obtained similarly.

To prove the convergence of the last item in the right-hand side of \eqref{eq_llll} to zero we note that for each $T>0$ and compact set $U\subset\R$ there exists $C>0$ such that
\begin{equation}\label{eq_l234}
\sup_{t_0\in[0,\infty)}\int_{t_0+\delta}^{t_0+T}ds\int_U\nu(s,dy)<C.
\end{equation}
Indeed, let  $R>0$ be such that $U\subset B(0,R)$. We have that for all  $s\in[t_0+\delta,t_0+T]$, $x\in\R$, and $y\in \R$ such that $|y|\leq R$,
$$
p_0(t_0,x,s,y)\geq\frac{1}{(2\pi\delta)^{d/2}}\exp\left\{-\frac{(R+|x|)^2}{2\delta}\right\}:=\frac{1}{K_\delta},
$$
where $p_0(t,x,s,y)$, $0\leq t\leq s$, $x\in\R, y\in\R$, is a transition probability density of a $d$-dimensional Wiener process.
For each $x\in\R$,
\begin{multline*}
\sup_{t_0\in[0,\infty)}\int_{t_0+\delta}^{t_0+T}ds\int_U\nu(s,dy)\leq K_\delta\sup_{t_0\in[0,\infty)}\int_{t_0+\delta}^{t_0+T}ds\int_{|y|\leq R} p_0(t_0,x,s,y)\nu(s,dy)\leq \\
K_\delta \sup_{t_0\in[0,\infty)}\sup_{\widetilde x\in\R}\int_{t_0}^{t_0+T}ds\int_{\R} p_0(t_0,\widetilde x,s,y)\nu(s,dy).
\end{multline*}
Fixed $C_0>0$, by the relation \eqref{Cond_A_prime} there exists $T_0>0$ such that
$$
\sup_{t_0\in[0,\infty)}\sup_{\widetilde x\in\R}\int_{t_0}^{t_0+T_0}ds\int_{\R} p_0(t_0,\widetilde x,s,y)\nu(s,dy)<C_0.
$$
Then  Remark \ref{Remark_triangle_ineq} implies that there exists $C_1>0$ such that
$$
\sup_{t_0\in[0,\infty)}\sup_{\widetilde x\in\R}\int_{t_0}^{t_0+T}ds\int_{\R} p_0(t_0,\widetilde x,s,y)\nu(s,dy)<C_1,
$$
which entails \eqref{eq_l234}. Now by Lemma \ref{Lemma_Converg_Densities} and \eqref{eq_l234} the last summand in the right-hand side of \eqref{eq_llll} tends to zero uniformly on $0\leq t_0<t_0+t\leq T$, $x\in\R$.

Thus we obtained that
$$
\sup_{0\leq t_0<t_0+t\leq T}\sup_{x\in\R} I_n^{1}(t_0,t,x)\to 0, \ \ n\to\infty.
$$
Using the similar argument we get
$$
\sup_{0\leq t_0<t_0+t\leq T}\sup_{x\in\R} I_n^{2}(t_0,t,x)\to 0, \ \ n\to\infty.
$$
This ends the proof.

\end{proof}

\end{document}